\documentclass[12]{amsart} 
\usepackage{graphicx,amsmath,amsthm,amssymb,latexsym,amsfonts,color,float}
\usepackage[bookmarksnumbered, colorlinks, plainpages]{hyperref}
\usepackage{algorithm2e}
\usepackage{booktabs}
\usepackage{hhline,booktabs}
\usepackage{multirow}
\usepackage{soul}
\usepackage{footnotehyper}
\usepackage{tikz}
\usepackage[mathscr]{euscript}
\usepackage{pifont}

\footskip=27pt

\setlength{\textwidth}{15.5cm}
\setlength{\textheight}{22cm}
\setlength{\topmargin}{-0.1cm}
\setlength\oddsidemargin{0.75cm}
\setlength\evensidemargin{0.0cm}

\setcounter{page}{1}


\newtheorem{thm}{Theorem}[section]

\newtheorem{lem}[thm]{Lemma}
\newtheorem{prop}[thm]{Proposition}
\newtheorem{defn}[thm]{Definition}
\newtheorem{rem}[thm]{\bf{Remark}}
\newtheorem{example}[thm]{\bf{Example}}

\definecolor{fgreen}{rgb}{0.13, 0.55, 0.13}

\newcommand{\C}{\mathbb C}

\newcommand{\cX}{\mathscr{X}}
\newcommand{\cV}{\mathscr{V}}

\newcommand{\cA}{\mathscr{A}}

\newcommand{\eq}[1]{\begin{equation}\label{#1}}
\newcommand{\en}{\end{equation}}
\newcommand{\ufld}{\textsl{Ufold}}
\newcommand{\fld}{\textsl{Fold}}
\newcommand{\crc}{{\rm circ}}
\newcommand{\bcrc}{{\rm bcirc}}

\graphicspath{{./FIGS/}}

\begin{document}
	
	\thispagestyle{empty}	
	\title{On the tubular eigenvalues of third-order tensors}
	
	\author[F. P. A. Beik and Y. Saad]{Fatemeh P. A. Beik$^\S$ and Yousef Saad$^{\ddag}$} 
	
	\maketitle
	
	{\centering
		\noindent $^\S${\small{Department of Mathematics, Vali-e-Asr University of Rafsanjan, P.O. Box 518, Rafsanjan, Iran}}\\
		\texttt{e-mails: f.beik@vru.ac.ir; beik.fatemeh@gmail.com}
		
		\noindent $^\ddag${\small{Computer Science \& Engineering, University of Minnesota, Twin Cities, USA}}\\
		\texttt{e-mail:  saad@umn.edu}\\[0.2cm]
		\date{\today}
	}
	
	\begin{abstract}
		This paper introduces the notion of tubular eigenvalues of third-order
		tensors with respect to T-products of tensors and analyzes their properties. A focus of
		the paper is to discuss relations between tubular eigenvalues and two
		alternative definitions of eigenvalue for third-order tensors that are known
		in the literature, namely eigentuple and T-eigenvalue. In
		addition, it establishes a few results on tubular spectra of tensors which can
		be exploited to analyze the convergence of tubular versions of iterative
		methods for solving tensor equations.  \bigskip
		
		\noindent \textit{Keywords}: Multilinear algebra, tensor, T-product,  T-eigenvalue, eigentuple, Iterative method.\\
		\noindent \textit{2010 AMS Subject Classification}: 15A69, 65F10.
	\end{abstract}

	\section{Introduction and preliminaries}\label{sec:intro}
	Throughout this paper, we consider third-order tensors 
	denoted by calligraphic letters, e.g., $\cX$, which are members of
	$\C^{n \times m \times p}$. Associated with such tensors, we denote by
	$ X_k\; \; {\rm for } \quad k=1,2,\ldots,p$ the matrix of order
	$n \times m $ obtained by fixing the last index to $k$. This matrix is
	commonly referred to as a \emph{frontal slice.}  The T-product was first
	introduced in \cite{Kilmer2011} as a tensor-tensor multiplication.
	When working with   high-dimensional data,   this
	product was found to be a particularly useful tool  when compared with matricization, see
	\cite{Hao,Khaleel,Kilmer2013,Kilmer2021,Reichel,Zeng} and the references
	therein.
	
	Generalizing the concept of eigenvalues  from matrices to tensors  has also
	been  considered using  T-products. For  instance, Braman  \cite{Braman}
	defined the eigenvalue of $n\times n \times n$ tensor as a vector rather
	than  a  scalar.  The  authors of  \cite{Qi}   defined the  concepts  of
	\emph{eigentuples}  and \emph{eigenmatrices}  of  $n\times  n \times  p$
	tensors as an extension  of the idea used in  \cite{Braman}. Recently, a few
	publications were   devoted  to the  study  of  eigenvalues  of
	$n\times n  \times p$ tensors as  scalars \cite{Liu,Miao,Miao2021} which
	are  called T-eigenvalues  of tensors.  In this  paper, we  consider the
	eigenvalue of an $n\times n \times p$  tensor as an $1\times 1 \times p$
	tensor called a  \emph{tubular} eigenvalue. The properties  of the above
	mentioned types of eigenvalues are  analyzed in detail. In addition, the
	paper  will aim  to present  a number  of relations  between tubular
	eigenvalues and  T-eigenvalues as well  as eigentuples. It  should be noted
	that there has been an increased interest in applying iterative methods
	for  solving  tensor equations  with  respect  to T-product,    see, e.g., 
	\cite{El2021tensor,ElIchi,Ma2022randomized,Reichel2022tensor,Reichel,Reichel2022weighted}.
	Therefore,
	we  also establish a few theoretical  results that  can be  utilized to
	analyze the convergence  of iterative methods with  respect to tubular
	eigenvalues and   point out the benefits of working with tubular
	eigenvalues instead of T-eigenvalues.
	
	The remainder of this paper is organized as follows. The  end of this
	section  presents the notations, definitions, and a few 
	preliminaries  used throughout the paper. The definition and
	properties of the tubular eigenvalues are presented in Section
	\ref{sec3}.  Section \ref{sec4}, introduces the notion of tubular
	spectral radius and establishes some convergence     results  of
	(non)-stationary iterative methods. The paper ends 
	with a few  concluding remarks in Section \ref{sec5}.

	\subsection{Definitions, notations and properties} The scalar Frobenius
	norm of a tensor $\mathscr{X}\in \mathbb{C}^{n\times m \times p}$ is
	given by (see \cite{Kolda})
	\[
	\left\| \mathscr{X} \right\|^2_F = {\sum\limits_{i_1  = 1}^{n} {\sum\limits_{i_2  = 1}^{m} { \sum\limits_{i_3  = 1}^{p} {|\mathscr{X} _{i_1 i_2i_3}|^2 } } } }^{} .
	\]
	The scalar Frobenius inner product of two tensors
	$\mathscr{X}, \mathscr{Y} \in \mathbb{C}^{n\times m \times p}$ of the same shape is defined by
	\begin{equation*}
		\left\langle {\mathscr{X}, \mathscr{Y}} \right\rangle_F  = \sum\limits_{i_1  = 1}^{n} \sum\limits_{i_2  = 1}^{m } \sum\limits_{i_3  = 1}^{p} {\rm conj}({\mathscr{X}_{i_1 i_2 i_3 }})\mathscr{Y}_{i_1 i_2 i_3 }
	\end{equation*}
	where ${\rm conj}({\mathscr{X}_{i_1 i_2 i_3 }})$ stands for the complex
	conjugate of $\mathscr{X}_{i_1 i_2 i_3 } $
	
	A $1 \times 1 \times p $ tensor $\mathscr{V}$ is called a \emph{tubular tensor}
	of length $p$.  To a vector $v \ \in \C^p$ with components $v_1,v_2,\ldots,v_p$,
	we can associate a tubular tensor $\cV$, in a canonical way by setting
	$\cV_{11j} = v_j$ for $j=1,2,\ldots,p$. Note that we can always associate a vector $v$ to
	a tubular tensor $\cV$ by defining the
	$j$th entry of $v$  to  be $\cV_{11j}$ for $j=1,2,\ldots,n$. Throughout the paper, the tubular tensor $\cV$ is denoted by $[v]$. The tubular tensor  $\cV $ may be the result of 
	an operation with non-tubular tensors and the vector 
	$v$ is implicit but not required in our notation.
	With the tubular tensor $[v]$ of length $p$, we associate the
	circulant matrix:
	\eq{eq:circMat} \crc ([v]) :=\left( {\begin{array}{*{20}{c}}
			{{v_1}}&{{v_{{p}}}}&{{v_{{{p - 1}}}}}& \ldots &{{v_2}}\\
			{{v_2}}&{{v_1}}&{{v_{{p}}}}& \ldots &{{v_3}}\\
			\vdots & \ddots & \ddots & \ddots & \vdots \\
			{{v_{{p}}}}&{{v_{{{p-1}}}}}&
			\ddots
			&{{v_2}}&{{v_1}}
	\end{array}} \right)
	\en 
	where $v_1,v_2,\ldots,v_p$ are the frontal slices of  $[v]$. It is well-known that
	$\crc([v])$ is diagonalized by a discrete Fourier transform (DFT), i.e.,
	$
	\crc([v]) = F_p D F^H_p
	$
	where the DFT matrix $F_p$ is defined by (see \cite{Chan}),
	\eq{eq:FpMat}
	{F}_p = \frac{1}{{\sqrt {{p}} }}\left( {\begin{array}{*{20}{c}}
			1&1&1&1& \cdots &1\\
			1&{{\omega ^{}}}&{{\omega ^2}}&{{\omega ^3}}& \cdots &{{\omega ^{{p} - 1}}}\\
			1&{{\omega ^2}}&{{\omega ^4}}&{{\omega ^6}}& \cdots &{{\omega ^{2({p} - 1)}}}\\
			1&{{\omega ^3}}&{{\omega ^6}}&{{\omega ^9}}& \cdots &{{\omega ^{3({p} - 1)}}}\\
			\vdots & \vdots & \vdots & \vdots & \ddots & \vdots \\
			1&{{\omega ^{p - 1}}}&{{\omega ^{2(p - 1)}}}&{{\omega ^{3(p - 1)}}}& \cdots &{{\omega ^{(p - 1)(p- 1)}}}
	\end{array}} \right),
	\en
	\noindent here $\omega =e^{-2\pi {\bf i} /p}$ is the primitive $p$--th root of
	unity in which ${\bf i}=\sqrt{-1}$ and $F^H$ denotes the conjugate transpose of
	$F$. Following \cite{Kilmer2013}, this is generalized to 3-way tensors as
	follows. For a tensor $\cX \ \in \ \C^{n \times m \times p}$ we denote by
	$X_1, X_2, \cdots, X_p$ the $p$ frontal slices of $\cX$ and associate to $\cX$
	the block-circulant matrix:
	\[\bcrc(\mathscr {X})=\left( {\begin{array}{*{20}{c}}
			{{X_1}}&{{X_{{p}}}}&{{X_{{p{- 1}}}}}& \ldots &{{X_2}}\\
			{{X_2}}&{{X_1}}&{{X_{{p}}}}& \ldots &{{X_3}}\\
			\vdots & \ddots & \ddots & \ddots & \vdots \\
			{{X_{{p}}}}&{{X_{{p{- 1}}}}}& \ddots &{{X_2}}&{{X_1}}
	\end{array}} \right)\in \mathbb{R}^{{np\times mp}} . \]
	It is pointed out in \cite[Sec. 2.5]{Kilmer2013},
	that block-circulant matrices can be block-diagonalized as follows:
	\begin{equation}\label{bld}
		\bcrc(\mathscr{X})=(F_p\otimes I_n) {\rm blockdiag}(\tilde{X}_1,\tilde{X}_2,\ldots,\tilde{X}_p)(F_p^H\otimes I_m)
	\end{equation}
	where the {$n \times m$} matrices $\tilde{X}_1,\tilde{X}_2,\ldots,\tilde{X}_p$  will be complex unless certain symmetry conditions hold.
	
	We also  denote the unfold operation \cite{Kilmer2013}
	by $\ufld$  and and its inverse by $\fld$:
	\[
	\ufld(\mathscr {X} ) = \begin{pmatrix}
		X_{1}   \\
		X_{2}   \\
		\vdots \\
		X_{p}\end{pmatrix} \in \mathbb{R}^{np\times m} ,
	\qquad \fld(\ufld(\mathscr {X}) ) =  \mathscr {X}
	\]     
	With this the article \cite{Kilmer2013} defines the T-product of 3-way tensors as follows: 
	
	\begin{defn}
		The \textbf{T-product}, denoted by $\ast$, between  two tensors
		$\mathscr {X} \in \mathbb{R}^{n\times m\times p} $ and $\mathscr {Y} \in \mathbb{R}^{m\times \ell \times p} $ is the following  ${n \times \ell \times p}$ tensor~:
		$$\mathscr {X}\ast\mathscr {Y}=\fld(\bcrc(\mathscr {X})\ufld(\mathscr {Y}) ) . $$
	\end{defn}
	
	Note that the T-product was originally defined in \cite[Definition 3.1]{Kilmer2011}.
	The T-product of two tensors is not commutative in general
	\cite[Example 2.6]{Kilmer2013}. However, it is commutative for tubular tensors,
	an immediate consequence of the property that two circulant matrices of the same
	dimension are diagonalizable by the same matrix $F_p$ - as seen above. It can be
	verified that the T-product is associative, i.e.,
	$\mathscr{A}\ast(\mathscr{B} \ast \mathscr{C})=(\mathscr{A}\ast\mathscr{B}) \ast
	\mathscr{C}$, see \cite[Lemma 3.3]{Kilmer2011}.

	Throughout this paper, the zero tensor of size $n\times 1 \times p$ is denoted
	by $\mathscr{O}$. The $n \times n\times p$ identity tensor $\mathscr{I}_{ nnp}$
	is the tensor whose first frontal slice is the $ n \times n$ identity matrix,
	and the other frontal slices are all zeros, i.e.,
	$$\ufld({\mathscr{I}_{ nnp}})  = \begin{pmatrix}
		I_{n}   \\
		0 _{n} \\
		\vdots \\
		0_{n}\end{pmatrix}$$
	where $I_{n}$ and $0 _{n}$ are the $n \times n$ identity and zero matrices, respectively. In the special case when $n=1$, the identity tensor becomes the tubular tensor  $[e_1]$, where $e_1  = (1,0,0\ldots,0)^T $ is the first column of the $p \times p$ identity matrix.

	In addition, we  use the following basic definitions:
	
	\begin{enumerate}
		\item An $n\times n \times p$ tensor $\mathscr{A}$ is non-singular, if there exists a tensor $\mathscr{B}$ of order  $n \times n \times p$  such that
		$$\mathscr{A}\ast \mathscr{B}=\mathscr{I}_{ n  n  p} \qquad \text{and}\qquad \mathscr{B}\ast \mathscr{A}=\mathscr{I}_{ n  n  p}$$
		It can be seen that $\mathscr{A}$ is non-singular if and only if   $\bcrc(\mathscr{A})$ is non-singular; see \cite{Miao}.
		\item If $\mathscr{A} $ is a real-valued $n \times m \times p$ tensor, then
		$\mathscr{A}^{T}$ is the $m \times n \times p$ tensor obtained by transposing
		each of the front-back frontal slices and then reversing the order of the
		transposed frontal slices 2 through $p$.  For a complex-valued tensor
		$\mathscr{A}$, the conjugate transpose of $\mathscr{A}$ is defined by
		$\mathscr{A}^H=(\bar{ \mathscr{A}})^T$.  An $n\times n \times p$ real
		(complex) valued tensor $\mathscr{A}$ is called symmetric $( {Hermitian})$, if
		$\mathscr{A}=\mathscr{A}^{T}$ $(\mathscr{A}=\mathscr{A}^{H})$.
		\item A tensor $\mathscr{Q} \in \mathbb{C}^{n\times n \times p}$ is called unitary if it satisfies:
		\[
		\mathscr{Q}^H \ast \mathscr{Q} =\mathscr{Q} \ast \mathscr{Q}^H=\mathscr{I}_{ nnp}.
		\]
		A real-valued tensor $\mathscr{Q}$ that satisfies the above relation is called
		an orthogonal tensor \cite[Definition 3.18]{Kilmer2011}.
	\end{enumerate}
	
	It should be noted that $\bcrc(\mathscr {A})^T=\bcrc(\mathscr {A}^T)$ and that
	this may serve as an alternative definition for $\cA^T$. Therefore, $\cA$ is
	Hermitian (complex), resp. symmetric (real), iff $\bcrc(\cA)$ is Hermitian
	(complex), resp. symmetric (real).

	\subsection{Basic concepts}\label{sec:basics}
	The positive definiteness of tensors in terms of T-product  defined
	in \cite{Beik} can be regarded as a natural extension of the
	same concept for matrices.
	
	\begin{defn}\label{def:spd}
		The tensor $\mathscr{A}\in \mathbb{C}^{n\times n \times p_{}}$
		is said to be positive (semi) definite if $$(\mathscr{X}^H \ast \mathscr{A} \ast \mathscr{X})_{::1} > (\ge ) 0,$$
		for all nonzero tensors $\mathscr{X}\in \mathbb{C}^{n\times 1 \times p}$.
	\end{defn}
	
	When $\mathscr{A}$ is symmetric/Hermitian positive (semi)definite, we write
	$\mathscr{A}\succ 0$ ($\mathscr{A}\succcurlyeq 0$). Furthermore, for two given
	symmetric/Hermitian tensors $\mathscr{A}$ and $\mathscr{B}$, the notation $\mathscr{A}\succ \mathscr{B}$
	($\mathscr{A}\succcurlyeq \mathscr{B}$) means that
	$\mathscr{A}-\mathscr{B}\succ 0$ ($\mathscr{A}-\mathscr{B}\succcurlyeq 0$).
	The symbols $\prec,  \preccurlyeq$ are defined analogously.
	
	\begin{rem}\label{rem1.5}
		A useful observation is that when $n = 1$, i.e., when $\cA$ and $\cX$ are both tubular tensors, which we now denote by $[a]$ and $[x]$, respectively, then the product $[x]^H \ast [a] \ast [x]$  becomes
		\begin{align} 
			[x]^H \ast [a] \ast [x]
			&= \fld \left[ \crc([x])^H \ufld
			\left( \fld\left(\crc([a]) \ufld([x]) \right) \right) \right]  \nonumber \\
			& = \fld [\crc([x])^H \crc([a]) x ] \nonumber \\
			&=  [y] \label{eq:bil0} 
		\end{align} 
		where the vector $y\in \C^p$ is defined by $y=\crc([x])^H \ \crc([a]) x$.
		Note that the first row of $\crc([x])^H$ is
		$x^H$ and so the 1st  entry of the  tubular in \eqref{eq:bil0}  is
		$x^H \crc(a) x$, i.e., we can write:
		\eq{eq:bil} 
		([x]^H \ast [a] \ast [x])_{::1} = {x}^H \crc([a]) {x} . 
		\en
	\end{rem}

	We saw earlier that for a given vector $v$, the matrix $D = F^H_p\crc([v])F_p $ is diagonal. In other words, the matrix $\crc([v])$ is
	unitarily similar to a diagonal matrix $D$, and   the columns of $F_p$
	represent eigenvectors of the matrix $\crc ([v])$. Now, as a consequence of Remark \ref{rem1.5}, we can conclude the following proposition.
	
	\begin{prop}\label{prop1.4}
		Let $[v]$ be a symmetric/Hermitian tubular tensor of length $p$ and $D$ be the diagonal matrix
		$D=F^H_p\crc([v])F_p $
		where $F_p$ denotes the DFT matrix of order $p$. Then the tubular tensor
		$[v]$ is  positive definite iff the diagonal matrix $D$ has real positive diagonal entries.
	\end{prop}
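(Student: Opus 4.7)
The plan is to reduce the tubular positive-definiteness of $[v]$ to the ordinary positive-definiteness of the circulant matrix $\crc([v])$, and then exploit the fact that $F_p$ simultaneously diagonalizes every circulant matrix to read off the criterion on the diagonal entries of $D$.

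First, I would unpack Definition \ref{def:spd} in the tubular case $n=1$. By Remark \ref{rem1.5}, specifically equation \eqref{eq:bil}, we have
\[
([x]^H \ast [v] \ast [x])_{::1} = x^H \crc([v]) x
\]
for every $[x]$ of length $p$, where $x$ is the associated vector. Since the map $[x] \mapsto x$ between $1 \times 1 \times p$ tensors and vectors in $\C^p$ is a bijection onto $\C^p$ (and $[x] = \mathscr{O}$ iff $x=0$), the tubular positive-definiteness of $[v]$ is equivalent to the classical positive-definiteness of the $p \times p$ matrix $\crc([v])$.

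Next, I would use the hypothesis that $[v]$ is Hermitian (resp. symmetric), which by the observation following item (3) in the preliminaries means that $\crc([v])$ is Hermitian (resp. symmetric, hence Hermitian). Combined with the identity $\crc([v]) = F_p D F_p^H$ recalled just before the statement, this gives a unitary diagonalization of a Hermitian matrix; consequently $D = F_p^H \crc([v]) F_p$ is itself Hermitian, so its diagonal entries are automatically real, and these diagonal entries are exactly the eigenvalues of $\crc([v])$.

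Finally, I would invoke the standard spectral characterization: a Hermitian matrix is positive definite iff all its eigenvalues are (real and) strictly positive. Combining this with the equivalence established in the first step yields the desired iff. There is no real obstacle here; the main point to be careful about is simply verifying that the bilinear expression in Remark \ref{rem1.5} really does govern tubular positive-definiteness for \emph{all} nonzero $[x]$, which follows because \eqref{eq:bil} holds for arbitrary $x \in \C^p$ and the association $[x]\leftrightarrow x$ is a bijection.
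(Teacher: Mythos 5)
Your proposal is correct and follows essentially the same route as the paper: both reduce tubular positive definiteness of $[v]$ to positive definiteness of $\crc([v])$ via \eqref{eq:bil}, and then read off the criterion from the (unitary) similarity $\crc([v])=F_pDF_p^H$, the diagonal entries of $D$ being the eigenvalues. Your write-up merely spells out more explicitly the bijection $[x]\leftrightarrow x$ and the realness of the eigenvalues in the Hermitian case, which the paper leaves implicit.
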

	
	\begin{proof}
		According to relation
		\eqref{eq:bil} and Definition~\ref{def:spd},  the tubular tensor
		$[v]$ is SPD iff $\crc([v])$ is SPD. Since  $\crc([v])$ and $D$ are similar matrices, the tubular tensor
		$[v]$  is SPD  iff  $D$ has positive diagonal entries.  
	\end{proof}

	Let $[v]$ be any tubular tensor of length $p$. By Proposition \ref{prop1.4},
	there exists a diagonal matrix $D={\rm diag}(d_{11},d_{22},\ldots,d_{pp})$ such
	that $\crc([v])=F_pDF^H_p$. From this, one can define the function $f(C)$ as
	$f(C) = F_p f(D) F_p^H$ where $C=\crc([v])$. By properties of circulant
	matrices, it turns out that $f(C)$ is also circulant and therefore equal to
	$\crc (w)$ for some vector $w$. According to our notation \eqref{eq:circMat},
	that vector $w$ is just the first column of the related circulant matrix. Thus,
	we can define \eq{eq:fofv} f([v]) = [w] \quad \mbox{where} \quad w = F_p f(D)
	F_p^H e_1 .  \en Assume now that $[v]$ is Hermitian positive definite in which
	case $d_{ii}>0$ for $i=1,2,\ldots,p$.  For the particular case of the square
	root function, note that $\crc([v])=F_pD^{1/2}F_p^HF_pD^{1/2}F^H_p$ where
	$D^{1/2}:={\rm
		diag}(\sqrt{d_{11}},\sqrt{d_{22}},\ldots,\sqrt{d_{pp}})$. Therefore, we have
	$[v]=[w]\ast [w]$ where $\crc([w])=F_pD^{1/2}F_p^H$ as seen above.  In this case,
	$[w]$ is clearly a Hermitian positive definite tubular tensor and we will refer
	to it as the ``square root" or ``root'' of $[v]$ and denote it  by $[v]^{1/2}$.  
	Further exploring \eqref{eq:fofv} and considering  the matrix $F_p$ in
	\eqref{eq:FpMat}, reveals that the vector $F_p^H e_1 $ in \eqref{eq:fofv}
	equals $p^{-1/2} e $ where $e$ is the vector of all ones.  The result of
	$f(D) e/\sqrt{p}$ is just the vector with components $f(d_i) / \sqrt{p}$.
	Therefore, \emph{the vector $w$ is nothing but the Discrete Fourier Transform of
		the vector with components $f(d_i)$ scaled by $\sqrt{p}$}.

	We end this section by recalling the following theorem which presents the
	T-Jordan Canonical Form (TJCF) see \cite{Miao2021} for details.
	
	\begin{thm}{\rm (TJCF)}\label{thm:TJCF} 
		Let $\mathscr{A} \in \mathbb{C}^{n\times n \times p}$. There
		exist a non-singular tensor $\mathscr{P} \in \mathbb{C}^{n\times n \times p}$ and a F-upper-bi-diagonal\footnote{A tensor $\mathscr{A}\in \C^{n \times n \times p}$ is said to be  F-diagonal, or F-upper(-bi)-diagonal, if each frontal slice of $\mathscr{A}$ is (respectively) a diagonal and  upper(-bi)-diagonal matrix.} tensor $\mathscr{J} \in \mathbb{C}^{n\times n \times p}$ such
		that
		$\mathscr{A} = \mathscr{P}^{-1} \ast \mathscr{J} \ast \mathscr{P}.$
	\end{thm}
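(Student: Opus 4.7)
The plan is to lift the problem to the Fourier domain using the block-diagonalization \eqref{bld}, apply the classical matrix Jordan canonical form frequency-by-frequency, and then push the decomposition back to the tensor world. Explicitly, associated to $\mathscr{A}$ are the $n\times n$ blocks $\tilde{A}_1,\ldots,\tilde{A}_p$ for which
$$\bcrc(\mathscr{A}) = (F_p\otimes I_n)\,{\rm blockdiag}(\tilde{A}_1,\ldots,\tilde{A}_p)\,(F_p^H\otimes I_n).$$
For each $k$, classical matrix theory supplies a non-singular $\tilde{P}_k\in\mathbb{C}^{n\times n}$ and an upper bidiagonal Jordan matrix $\tilde{J}_k$ with $\tilde{A}_k=\tilde{P}_k^{-1}\tilde{J}_k\tilde{P}_k$.

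Next I would define $\mathscr{P}$ and $\mathscr{J}$ as the unique tensors in $\mathbb{C}^{n\times n\times p}$ whose Fourier-domain blocks are the $\tilde{P}_k$ and $\tilde{J}_k$, respectively; equivalently, their block-circulants are obtained by conjugating ${\rm blockdiag}(\tilde{P}_k)$ and ${\rm blockdiag}(\tilde{J}_k)$ by $(F_p\otimes I_n)$ as in \eqref{bld}. Since the operator $\bcrc(\cdot)$ intertwines the T-product with ordinary matrix multiplication, which in turn is converted to block-diagonal multiplication in the Fourier domain, the frequency-wise identities $\tilde{A}_k=\tilde{P}_k^{-1}\tilde{J}_k\tilde{P}_k$ glue together into $\bcrc(\mathscr{A})=\bcrc(\mathscr{P})^{-1}\bcrc(\mathscr{J})\bcrc(\mathscr{P})$, which is precisely the block-circulant shadow of the desired identity $\mathscr{A}=\mathscr{P}^{-1}\ast\mathscr{J}\ast\mathscr{P}$. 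Invertibility of each $\tilde{P}_k$ makes the block-diagonal (and hence $\bcrc(\mathscr{P})$) invertible, which by item (1) of the basic definitions is equivalent to non-singularity of $\mathscr{P}$.

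The remaining step is to check that $\mathscr{J}$ is F-upper-bi-diagonal. The $j$-th frontal slice of $\mathscr{J}$ is recovered by inverting the Fourier block-diagonalization, giving
$$J_j = \frac{1}{p}\sum_{k=1}^{p}\omega^{-(j-1)(k-1)}\,\tilde{J}_k.$$
Since the upper bidiagonal $n\times n$ matrices form a linear subspace of $\mathbb{C}^{n\times n}$ and every $\tilde{J}_k$ lies in this subspace, each $J_j$ is upper bidiagonal, which is exactly the F-upper-bi-diagonal requirement. The main obstacle, conceptually, is not any single analytic step but the careful bookkeeping needed to translate back and forth between the T-product world and the Fourier block-diagonal world; once one commits to the viewpoint that \eqref{bld} makes $\bcrc(\cdot)$ a ring homomorphism onto block-diagonal matrices, the argument becomes a clean transport of the classical Jordan canonical form, and both the algebraic identity and the preservation of the bidiagonal structure follow without further work.
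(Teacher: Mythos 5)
Your proof is correct and follows essentially the same route as the proof the paper refers to (the theorem is quoted from Miao et al.\ and not reproved here): block-diagonalize $\bcrc(\mathscr{A})$ via \eqref{bld}, take the classical Jordan form of each Fourier block $\tilde{A}_k$, and transport the block-wise similarity back through $\bcrc$, exactly as the paper's own remark after Definition \ref{def:Tevalue} indicates. The only point worth making explicit is that every block-diagonal matrix arises as the Fourier conjugate of a unique block circulant (e.g.\ by your inverse-DFT slice formula, or by a dimension count), so $\mathscr{P}$ and $\mathscr{J}$ are genuinely well-defined tensors; with that noted, the argument is complete.
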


	\section{Eigenvalues of tensor with respect to T-product}\label{sec3}
	
	We begin this section by reviewing the notions of T-eigenvalue and eigentuple of
	a third-order tensor and then present the definition of tubular eigenvalues for
	tensors, along with a few properties. Furthermore, we will show how these three
	notions of eigenvalue are related. Finally, we briefly discuss a few additional
	properties of tubular eigenvalues for Hermitian tensors.
	
	\subsection{T-eigenvalues and eigentuples of a tensor}
	
	The following definition of T-eigenvalue was recently given in
	\cite{Liu}.
	\begin{defn}\label{def:Tevalue}
		Let $\mathscr{A} \in \mathbb{C}^{n\times n \times p}$.  If there exists a
		scalar $\lambda\in \mathbb{C}$ and a nonzero tensor
		$\mathscr{X} \in \mathbb{C}^{n\times 1 \times p}$ such that
		\[
		\mathscr{A} \ast \mathscr{X} = \lambda \mathscr{X}
		\]
		then the scalar $\lambda$ is called a T-eigenvalue of $\mathscr{A} $ and
		$\mathscr{X}$ is a T-eigenvector of $\mathscr{A} $ associated with $\lambda$.
	\end{defn}
	
	With this definition, Liu and Jin \cite{Liu} proved a number of inequalities for
	Hermitian tensors.  More precisely, Weyl's theorem and Cauchy's interlacing
	theorem were generalized for the tensor case with respect to T-eigenvalues.
	
	It is worth noting that the above definition is strongly related to the
	``$\bcrc$" representation of the tensor. In fact, if  $\lambda$
	is  a T-eigenvalue of $\mathscr{A} $, then 
	$$\bcrc(A)\ufld(\mathscr{X}) = \lambda \cdot \ufld(\mathscr{X}).$$
	That is, the T-eigenvalues of $\mathscr{A} $ are merely the eigenvalue of
	the matrix $\bcrc(A)$.
	The above definition of the T-eigenvalues is equivalent to a definition given
	in \cite{Miao, Miao2021} which relies on the
	T-Jordan Canonical Form seen earlier, see, Theorem~\ref{thm:TJCF}.
	Indeed, from the proof of Theorem~\ref{thm:TJCF}, it turns out that
	given a tensor $\mathscr{A} \in \mathbb{C}^{n\times n \times p}$  there exists
	a $p \times p$ {\rm (DFT)} matrix $F_p$ and upper triangular matrices
	$C_1,C_2,\ldots,C_p$ such that
	\[
	{\bcrc(\mathscr{J})} =(F_p \otimes I_n)  {\rm blockdiag(C_1,C_2,\ldots,C_p)}(F_p^H  \otimes I_n) . 
	\]
	Therefore, the articles \cite{Miao, Miao2021} define the
	diagonal elements of  the $C_i$'s as the T-eigenvalues of $\mathscr{A}$.

	It is more natural to define eigenvalues of a tensor as tubular tensors instead
	of scalars, see \cite{Braman,Kilmer2013} for details.  Specifically,
	Bradman \cite[Theorem 5.1]{Braman} defined the notion of real eigentuples and
	eigenmatrices for third-order tensors in $\mathbb{R}^{n\times n \times n}$. The
	subsequent definition of eigentuples was recently given by Qi and Zhang
	\cite[Section 3]{Qi} as an extension of the definition of eigentuples given in
	\cite{Braman}.
	
	Given an $n\times p$ matrix $X$, we consider the T-product $\cA \ast \cX $ where
	$\cX $ is the matrix $X$ viewed as an $n \times 1 \times p$ tensor. This
	T-product is itself an $n \times 1 \times p$ tensor which can be reshaped into
	an $n \times p$ matrix. We denote this matrix by $\cA \ast_m X $. This notation is used in the
	following definition \cite{Qi}.

	\begin{defn}\label{def:QiZh} 
		Suppose that $\mathscr{A} \in \mathbb{R}^{n\times n \times p}$. If there
		exists a nonzero matrix $X\in \mathbb{C}^{n\times p}$, $d \in \mathbb{C}^{ p}$
		such that
		\eq{eq:QiZh} 
		\mathscr{A} \ast_m X = X {\rm circ}([d])
		\en
		then the vector $d$  is called  eigentuple of $\mathscr{A}$, and the matrix $X$ is an eigenmatrix of $\mathscr{A}$ associated with the eigentuple $d$.\footnote{We comment that Qi and Zhang  \cite{Qi} used the notation $d \circ X$ to refer the product $ X {\rm circ}([d])$.}
	\end{defn}

	\subsection{Tubular eigenvalue} 
	Let $\mathscr{A} \in \mathbb{C}^{n\times n \times p}$ be given.  Here, we
	consider an eigenvalue of $\mathscr{A}$ as a tubular tensor and analyze the
	associated tubular spectrum.  The \textit{tubular eigenvalue} of $\mathscr{A}$
	is defined next.
	
	\begin{defn}\label{eig:def}The tubular tensor $[\lambda] \in \mathbb{C}^{1\times 1 \times p}$ is called a
		tubular eigenvalue of $\mathscr{A} \in \mathbb{C}^{n\times n \times p}$,  if there exists a tensor
		$\mathscr{X} \in \mathbb{C}^{n\times 1 \times p}$ such that
		\eq{eq:MainDef}
		\mathscr{A} \ast \mathscr{X} = \mathscr{X} \ast [\lambda]
		\en
		and the tubular tensor $\mathscr{X}^H \ast \mathscr{X}$ is non-singular. The
		tensor $\mathscr{X}$ is called the \emph{eigentensor} associated with the tubular
		eigenvalue $[\lambda]$. The \emph{tubular spectrum}, denoted by 
		$\sigma_T(\mathscr{A})$, is the set  of  all
		tubular eigenvalues of $\mathscr{A}$. 
	\end{defn}

	Note that in the above definition the tubular tensor
	$\mathscr{X}^H \ast \mathscr{X}$ associated with the eigentensor
	$\mathscr{X} \in \mathbb{C}^{n\times 1 \times p}$ must be non-singular. For $\mathscr{X} \in \mathbb{C}^{n\times 1 \times p}$, it is
	known that (see Eq. \eqref{bld})
	\[
	\bcrc(\mathscr{X})=(F_p\otimes I_n) {\rm blockdiag}(\tilde{x}_1,\tilde{x}_2,\ldots,\tilde{x}_p)F_p^H
	\]
	where $\tilde{x}_i$ are $n$ dimensional vectors for $i=1,2,\ldots,p$
	considered as an $n\times 1 $ matrix in the above expression involving  $\text{blockdiag}$.
	On can observe that 
	\[
	\bcrc(\mathscr{X})^H\bcrc(\mathscr{X})=
	F_p{\rm diag}(\tilde{x}_1^H\tilde{x}_1,\tilde{x}_2^H\tilde{x}_2,\ldots,\tilde{x}_p^H\tilde{x}_p)F_p^H .
	\]
	Notice that the tubular tensor $\mathscr{X}^H \ast \mathscr{X}$ is
	non-singular if and only if the $p \times p$ matrix
	$\bcrc(\mathscr{X})^H\bcrc(\mathscr{X})$ is non-singular. As a result, we see
	that the tubular tensor $\mathscr{X}^H \ast \mathscr{X}$ is non-singular iff
	the vectors $\tilde{x}_1,\tilde{x}_2,\ldots,\tilde{x}_p$ are all non-zero.
	
	%
	
	
	Let $x_1, x_p,\ldots, x_2$ be the vectors in the first bock row of the matrix
	$ \bcrc(\mathscr{X})$. The above considerations suggest that the condition of nonsingularity of
	$\mathscr{X}^H \ast \mathscr{X}$ may be somehow related to some form of linear independence of the
	columns of the matrix $X=[x_1,x_2,\ldots,x_p]$. We now explore this link.  As was just seen
	this condition is equivalent to the requirement that the matrix
	$\bcrc(\mathscr{X})^H\bcrc(\mathscr{X})$ be non-singular, which is turn is
	true iff the matrix
	$\bcrc(\mathscr{X})$, which is  of dimension $(np) \times p$, 
	is of full rank.  Assume now that there is a nonzero vector
	$a = [\alpha_1, \alpha_2, \alpha_{3}, \ldots, \alpha_p]^T$ such that
	$\bcrc(\mathscr{X}) \hat a = 0$, where
	$\hat a = [\alpha_1, \alpha_p, \alpha_{p-1}, \ldots, \alpha_2]^T$.
	The  indexing of the components of $\hat a$ is  intended
	to match that of the vectors $x_1, x_p,\ldots, x_2$.
	The first row of the relation $\bcrc(\mathscr{X}) \hat a = 0$ can be written as follows: 
	\[ \alpha_1 x_1 + \alpha_{p} x_{p} + \alpha_{p-1} x_{p-1} + \cdots + \alpha_{2}
	x_2 = 0.\] The second row sees the vectors $x_i$ circularly shifted
	\emph{down} (index increases by one cyclically). If we wish the linear
	combination to be in the same order of the vectors as above, we could keep the
	same indexing for the $x_i$'s but move \emph{up} the indices of the the
	$\alpha_i$ cyclically. This yields:
	\[
	\alpha_{1} x_2  + \alpha_p x_1 + \alpha_{p-1} x_p + \cdots +  \alpha_{p-2} x_{p-1}   = 0 \ \to \ 
	\alpha_p x_1 + \alpha_{p-1} x_p + \alpha_{p-2} x_{p-1} + \cdots + \alpha_{1} x_2  = 0 .\]
	This is repeated a few times until we reach the last row:
	\[ \alpha_2 x_1 + \alpha_{1} x_p + \alpha_{p} x_{p-1} + \cdots + {
		\alpha_{3}} x_2 = 0 .\] 
	These equations can be captured by the single matrix
	equation: $ [x_1, x_p,\ldots, {x_2}] h(\hat a) = 0 $ in which $h(\hat a) $
	is the Hankel matrix whose first column is $\hat a$ and columns $2, 3,\ldots, p$
	are obtained by moving repeatedly this column up in a circular fashion.  If we
	permute the columns of the matrix $[x_1, x_p,\ldots, x_1]$ into
	$X=[x_1, x_2,\ldots, x_p]$ and permute the rows of $h(\hat a)$ accordingly,
	then we arrive at the relation: \eq{eq:nonsingvec} X \crc(a) = 0.  \en In other
	words, the singularity of the matrix $\bcrc(\mathscr{X})^H\bcrc(\mathscr{X})$ is
	equivalent to the existence of a nonzero vector $a$ such that $X \crc(a) = 0$.  This
	can also be expressed in terms of the tubular tensor $[\hat a]$ since the
	condition $\bcrc(\cX) \hat a = 0$ translates to $\cX * [\hat a] = 0$. Thus,
	$\bcrc(\mathscr{X})^H\bcrc(\mathscr{X})$ is nonsingular iff there are no tubular
	tensors $\hat a $ for which $\cX * [\hat a] = 0$.
	
	
	It is known that the eigenvectors associated with distinct eigenvalues of a
	matrix, form a linearly independent set of vectors.  Proposition
	\ref{prop3.7n} to be stated later, will reveal that an analogous result can be
	extended to tubular eigenvalues. Before presenting the proposition, we need to
	recall the definition of T-linear combination \cite[Definition
	4.1]{Kilmer2013}, specify the concept of T-linear (in)dependency and establish
	a simple lemma.

	Given tubular tensors $[c_1],[c_2],\ldots,[c_k]$ of length $p$, the
	\textit{T-linear combination} of tensors
	$\mathscr{X}_1,\mathscr{X}_2,\ldots,\mathscr{X}_k$ of size $n\times 1 \times p$
	is defined by
	\[
	\mathscr{X}_1 \ast [c_1]+ \mathscr{X}_2 \ast [c_2]+ \cdots {+} \mathscr{X}_k \ast[c_k]\equiv\mathscr{X} \ast \mathscr{C}
	\]
	where $\mathscr{X}$ and $\mathscr{C}$ are respectively tensors of size
	$n \times k \times p$ and $k \times 1 \times p$ such that
	$\mathscr{X}_{:j:}=\mathscr{X}_j$ and $\mathscr{C}_{j::}=[c_j]$ for
	$j =1,2,\ldots,k$.  A T-linear combination of tensors of size
	$n\times 1 \times p$ can be viewed   as a generalization of  the  linear combination
	of vectors where tubular tensors play  the role of scalars.  In a natural way, we define the notion of linear
	(in)dependence for a set of tensors
	$\mathscr{X}_1,\mathscr{X}_2,\ldots,\mathscr{X}_k$ of size $n\times 1 \times p$
	as follows:
	\begin{defn}\label{def:lin}
		Let $\mathscr{X}_j$  be an $n\times 1 \times p$  tensor for $j =1,2,\ldots,k$.
		Assume that there exists tubular tensors $[c_1],[c_2],\ldots,[c_k]$ such that
		\begin{equation}\label{eqlin}
			\mathscr{X}_1 \ast [c_1]+ \mathscr{X}_2 \ast [c_2]+ \cdots +\mathscr{X}_k \ast[c_k] = \mathscr{O}.
		\end{equation}
		Tensors $\mathscr{X}_1,\mathscr{X}_2,\ldots,\mathscr{X}_k$ are said to be
		T-linearly dependent, if at least one of the tubular tensors $[c_1],[c_2],\ldots,[c_k]$ is non-zero.
		Otherwise, the tensors $\mathscr{X}_1,\mathscr{X}_2,\ldots,\mathscr{X}_k$ are called T-linearly independent.
	\end{defn}

	The following simple lemma helps clarify the definition.
	
	\begin{lem}
		Let  $\mathscr{X}_j$ be a tensor of size $n\times 1 \times p$ for $j=1,2,\ldots,k$ $(k\le n)$. 
		Assume that 
		\[
		\mathcal{S}_{\ell} :=\{\tilde{x}_{\ell} ^{(1)},\tilde{x}_{\ell}^{(2)},\ldots,\tilde{x}_{\ell}^{(k)}\}, \quad 1\le \ell \le p,
		\]
		where
		\begin{equation}\label{circ:form1}
			\bcrc(\mathscr{X}_j)=(F_p \otimes I_n) {\rm blockdiag}\left(\tilde{x}_1^{(j)},\tilde{x}_2^{(j)},\ldots,\tilde{x}_p^{(j)}\right)F_p^H\quad j=1,2,\ldots,k.
		\end{equation}
		The set of tensors $\mathscr{X}_1,\mathscr{X}_2,\ldots,\mathscr{X}_k$ is T-linearly independent if and only if each 
		$\mathcal{S}_{\ell} $ is a linearly independent set of vectors for  $\ell=1,2,\ldots,p$.
	\end{lem}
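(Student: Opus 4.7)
My plan is to work in the Fourier (block-diagonalized) domain, where both the T-product and the tubular multiplication become decoupled slice-wise operations, so that T-linear (in)dependence reduces to ordinary linear (in)dependence in each of the $p$ slices.

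First I would record the Fourier picture for tubular factors. Any tubular tensor $[c_j]$ satisfies $\crc([c_j]) = F_p D_j F_p^H$ with $D_j = \mathrm{diag}(\tilde c_1^{(j)},\ldots,\tilde c_p^{(j)})$, and hence $[c_j] = 0$ if and only if every scalar $\tilde c_\ell^{(j)}$ vanishes. Combining this with the hypothesized block-diagonalization \eqref{circ:form1} and using $\bcrc(\mathscr{X}_j \ast [c_j]) = \bcrc(\mathscr{X}_j)\,\bcrc([c_j])$ together with $F_p^H F_p = I_p$, a short bookkeeping computation yields
\[
\bcrc(\mathscr{X}_j \ast [c_j]) = (F_p \otimes I_n)\,\mathrm{blockdiag}\!\left(\tilde c_1^{(j)}\tilde x_1^{(j)},\ldots,\tilde c_p^{(j)}\tilde x_p^{(j)}\right) F_p^H.
\]
Summing over $j$ and using that $(F_p \otimes I_n)$ and $F_p^H$ are invertible, the T-linear combination identity $\sum_{j=1}^k \mathscr{X}_j \ast [c_j] = \mathscr{O}$ becomes equivalent to the $p$ decoupled vector identities
\[
\sum_{j=1}^{k} \tilde c_\ell^{(j)}\,\tilde x_\ell^{(j)} \;=\; 0, \qquad \ell=1,2,\ldots,p.
\]

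From this pivotal equivalence, both directions are immediate. If every $\mathcal{S}_\ell$ is linearly independent, the displayed identities force $\tilde c_\ell^{(j)} = 0$ for all $\ell,j$, hence $[c_j]=0$ for every $j$, which is the definition of T-linear independence of $\mathscr{X}_1,\ldots,\mathscr{X}_k$. Conversely, suppose some $\mathcal{S}_{\ell_0}$ is linearly dependent, so that there are scalars $\alpha_1,\ldots,\alpha_k$ not all zero with $\sum_j \alpha_j\,\tilde x_{\ell_0}^{(j)} = 0$. Define $D_j$ to be the diagonal matrix whose only nonzero entry is $\alpha_j$ in position $(\ell_0,\ell_0)$, and let $[c_j]$ be the tubular tensor with $\crc([c_j]) = F_p D_j F_p^H$. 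Not all of the $[c_j]$ are zero, yet the decoupled identities hold, so by the equivalence $\sum_j \mathscr{X}_j \ast [c_j] = \mathscr{O}$, witnessing T-linear dependence.

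The only genuinely technical step is the displayed block-diagonalization of $\bcrc(\mathscr{X}_j \ast [c_j])$; once that is in hand, the rest is a direct translation between the two notions of independence. The main obstacle is therefore purely notational: keeping the Kronecker factor $F_p \otimes I_n$ on the left and the bare $F_p^H$ on the right correctly matched, given that the $\tilde x_\ell^{(j)}$ are $n\times 1$ blocks while the $\tilde c_\ell^{(j)}$ are scalars sitting on the diagonal of a $p\times p$ matrix. One can also phrase this identity via \eqref{bld} applied to the $n\times 1 \times p$ product tensor, which avoids manipulating $\bcrc(\mathscr{X}_j)$ of shape $np\times p$ directly.
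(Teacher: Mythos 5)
Your proof is correct and follows essentially the same route as the paper: block-diagonalize everything in the Fourier domain so that the T-linear combination $\sum_j \mathscr{X}_j \ast [c_j] = \mathscr{O}$ decouples into the $p$ slice-wise identities $\sum_j \tilde c_\ell^{(j)} \tilde x_\ell^{(j)} = 0$, from which both directions of the equivalence follow. The only difference is cosmetic (you manipulate $\bcrc$ of the product tensor rather than its unfolding via $e_1$) and you helpfully spell out the converse construction that the paper leaves to the reader.
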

	
	\begin{proof}
		Note that \eqref{eqlin} holds iff
		\begin{equation}\label{circ:form2}
			\sum_{i=1}^{k}\bcrc(\mathscr{X}_i ) \crc([c_i])e_1 = 0
		\end{equation}
		where $0$ denotes the zero vector of size $np$ and  $e_1$ stands for the first column of the identity matrix of size $p$. It is known that there exist scalars $\tilde{c}_1^{(i)},\tilde{c}_2^{(i)},\ldots,\tilde{c}_p^{(i)}$ such that
		\[
		\crc([c_i])=F_p \text{diag}(\tilde{c}_1^{(i)},\tilde{c}_2^{(i)},\ldots,\tilde{c}_p^{(i)})F_p^H.
		\]
		In view of \eqref{circ:form1}, it can be verified that \eqref{circ:form2} is equivalent to
		\[
		(F_p\otimes I_n) {\rm blockdiag}\left(\sum_{i=1}^{k}\tilde{c}_1^{(i)}\tilde{x}_1^{(i)},\sum_{i=1}^{k}\tilde{c}_2^{(i)}\tilde{x}_2^{(i)},\ldots,\sum_{i=1}^{k}\tilde{c}_p^{(i)}\tilde{x}_p^{(i)}\right)F^H_pe_1=0.
		\]
		One can observe that the above relation holds iff each block of the following block vector is zero
		\[
		\left(\sum_{i=1}^{k}\tilde{c}_1^{(i)}\tilde{x}_1^{(i)},\sum_{i=1}^{k}\tilde{c}_2^{(i)}\tilde{x}_2^{(i)},\ldots,\sum_{i=1}^{k}\tilde{c}_p^{(i)}\tilde{x}_p^{(i)}\right)^T.
		\] 
		The proof of the assertion follows.
	\end{proof}
	
	The following proposition can now be stated. The proof of the proposition uses mathematical induction and  straightforward	algebraic manipulations and it is therefore omitted.

	\begin{prop}\label{prop3.7n}
		Let $\mathscr{A}$ be an $n\times n \times p$ tensor. Assume that $[\lambda_1],[\lambda_2],\ldots,[\lambda_k]$
		are $k$ tubular eigenvalues of $\mathscr{A}$ with
		associated eigentensors  $\mathscr{X}_1,\mathscr{X}_2,\ldots,\mathscr{X}_k$  for $k>1$.
		If $[\lambda_i]-[\lambda_j]$ is non-singular for $i,j=1,2,\ldots,k$ with $i\ne j$, then the eigentensors
		$\mathscr{X}_1,\mathscr{X}_2,\ldots,\mathscr{X}_k$ 	are T-linearly independent.
	\end{prop}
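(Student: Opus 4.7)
The plan is to mimic the classical matrix argument and proceed by induction on $k$. For the base case $k=1$, a single-term relation $\mathscr{X}_1 \ast [c_1] = \mathscr{O}$ must be examined. The discussion preceding the proposition shows that non-singularity of $\mathscr{X}_1^H \ast \mathscr{X}_1$ is exactly equivalent to the statement that there is no nonzero tubular tensor $[c]$ with $\mathscr{X}_1 \ast [c] = \mathscr{O}$. Hence $[c_1]$ is the zero tubular tensor, and T-linear independence in the one-term case is immediate.

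For the inductive step, suppose the conclusion holds for any $k-1$ eigentensors satisfying the hypothesis, and assume
\begin{equation*}
\mathscr{X}_1 \ast [c_1] + \mathscr{X}_2 \ast [c_2] + \cdots + \mathscr{X}_k \ast [c_k] = \mathscr{O}.
\end{equation*}
Left-multiply by $\mathscr{A}$ and use the defining relation $\mathscr{A} \ast \mathscr{X}_j = \mathscr{X}_j \ast [\lambda_j]$ together with associativity of the T-product to obtain
\begin{equation*}
\sum_{j=1}^{k} \mathscr{X}_j \ast [\lambda_j] \ast [c_j] = \mathscr{O}.
\end{equation*}
Separately, right-multiply the original relation by $[\lambda_k]$ to get
\begin{equation*}
\sum_{j=1}^{k} \mathscr{X}_j \ast [c_j] \ast [\lambda_k] = \mathscr{O}.
\end{equation*}
Subtracting these two relations, and invoking the commutativity of the T-product for tubular tensors (so that $[\lambda_j] \ast [c_j] = [c_j] \ast [\lambda_j]$), the $j=k$ term cancels and what remains is
\begin{equation*}
\sum_{j=1}^{k-1} \mathscr{X}_j \ast \bigl( [c_j] \ast ([\lambda_j] - [\lambda_k]) \bigr) = \mathscr{O}.
\end{equation*}

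The inductive hypothesis, applied to the T-linearly independent family $\mathscr{X}_1,\ldots,\mathscr{X}_{k-1}$ (whose eigenvalues still satisfy the pairwise non-singular difference condition), forces $[c_j] \ast ([\lambda_j] - [\lambda_k]) = 0$ for $j=1,\ldots,k-1$. Since $[\lambda_j] - [\lambda_k]$ is non-singular by assumption, T-multiplying by its inverse gives $[c_j] = 0$ for $j=1,\ldots,k-1$. The starting relation then collapses to $\mathscr{X}_k \ast [c_k] = \mathscr{O}$, so the base-case argument applied to $\mathscr{X}_k$ yields $[c_k] = 0$ as well. Thus all coefficients vanish, establishing T-linear independence.

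The only subtlety worth flagging is the bookkeeping between commuting and non-commuting factors in the T-product: the step that cancels the $k$-th term after subtraction requires tubular commutativity, but the factors $\mathscr{X}_j$ themselves are non-tubular and cannot be reordered, so associativity is what lets us group them as $\mathscr{X}_j \ast ([c_j]\ast([\lambda_j]-[\lambda_k]))$ to feed into the inductive hypothesis. Once that is kept straight, the remainder is the direct analogue of the standard linear-algebra proof.
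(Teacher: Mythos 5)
Your proof is correct and is essentially the argument the paper has in mind when it omits the proof as ``mathematical induction and straightforward algebraic manipulations'': the base case correctly invokes the paper's observation that non-singularity of $\mathscr{X}^H \ast \mathscr{X}$ excludes any nonzero tubular tensor $[c]$ with $\mathscr{X} \ast [c] = \mathscr{O}$, and the inductive step is the standard cancellation argument, correctly adapted via associativity and the commutativity of tubular factors.
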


	Given a tubular tensor $[a]$, we define  the set 
	$\mathcal{S}_{[a]}=\{[a]_{::1},[a]_{::2},\ldots,[a]_{::p}\}$.  Let $[\lambda_1]$
	and $[\lambda_2]$ be two tubular eigenvalues of a tensor.
	We conclude this section with  a proposition which shows that if
	$\mathcal{S}_{[\lambda_1]}=\mathcal{S}_{[\lambda_2]}$ then
	$[\lambda_1]-[\lambda_2]$ is singular.
	
	\begin{prop}
		Let $[a]$ and $[b]$ be two tubular tensors of length $p$. If the sets
		$\mathcal{S}_{[a]}$ and $\mathcal{S}_{[b]}$ are equal, then $[a]-[b]$ is
		singular.
	\end{prop}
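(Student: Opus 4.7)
The plan is to reduce singularity of the tubular tensor $[a]-[b]$ to singularity of the $p\times p$ circulant matrix $\crc([a]-[b]) = \crc([a])-\crc([b])$, and then exhibit an explicit nontrivial kernel vector. First I would invoke the equivalence, recalled in item~(1) of the preliminaries, that a tubular tensor $[c]$ is non-singular if and only if $\crc([c])$ is non-singular (this is the $n=1$ specialization of the general equivalence between non-singularity of $\cA$ and of $\bcrc(\cA)$). Consequently it suffices to produce a nonzero vector $v\in\C^{p}$ with $\crc([a]-[b])\,v=0$.

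Next I would use the hypothesis $\mathcal{S}_{[a]}=\mathcal{S}_{[b]}$, read as equality of the $p$-tuples of entries up to permutation, to obtain
\[
\sum_{j=1}^{p} a_j \;=\; \sum_{j=1}^{p} b_j,
\qquad\text{equivalently}\qquad
\sum_{j=1}^{p}(a_j-b_j)=0.
\]
The elementary fact to combine this with is that in the circulant matrix $\crc([c])$ defined in~\eqref{eq:circMat}, every row is a permutation of the entries $c_1,\dots,c_p$, so each row sum equals $\sum_{j}c_{j}$. In other words, the all-ones vector $e=(1,1,\dots,1)^{T}\in\C^{p}$ is an eigenvector of $\crc([c])$ with eigenvalue $\sum_{j=1}^{p}c_{j}$. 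Applying this with $c=a-b$ gives $\crc([a]-[b])\,e=0$, so $e$ lies in the kernel of $\crc([a]-[b])$ and the latter matrix is singular. Combined with the reduction of the first step, this shows that $[a]-[b]$ is singular as a tubular tensor.

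I do not foresee any real obstacle here: the whole argument reduces to the single observation that $\sum_{j}c_{j}$ is always one of the eigenvalues of $\crc([c])$, together with the permutation hypothesis forcing precisely that eigenvalue of $\crc([a]-[b])$ to vanish. One could equivalently derive the same conclusion through the DFT diagonalization $\crc([c])=F_{p}DF_{p}^{H}$ used throughout the paper, since, as implicit in the discussion surrounding~\eqref{eq:fofv}, the diagonal entry of $D$ corresponding to the constant Fourier mode is $\sqrt{p}\,(F_{p}^{H}c)_{1}=\sum_{j}c_{j}$; but the direct row-sum argument is the shortest and cleanest presentation.
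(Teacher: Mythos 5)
Your proof is correct and is essentially the paper's own argument: the paper likewise reduces the claim to the circulant matrix and observes that the first diagonal entry of the DFT diagonalization of $\crc([a])-\crc([b])$, namely $\sum_{i=1}^{p}[a]_{::i}-\sum_{i=1}^{p}[b]_{::i}$, vanishes, which is exactly your statement that the all-ones vector lies in the kernel of $\crc([a]-[b])$ (the constant Fourier mode). The only difference is presentational; note also that you make explicit the multiset (permutation) reading of $\mathcal{S}_{[a]}=\mathcal{S}_{[b]}$, which the paper uses implicitly when it asserts that the two entry sums coincide.
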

	\begin{proof}
		It is known that there exist diagonal matrices ${D^a}$
		and ${D^{b}}$ such that $\crc([a])=F_{p}{D^a}F_p^H$ and $\crc([b])=F_{p}{D^b}F_p^H$.
		Hence, $D^a_{11}=(1/p)e^T\crc([a])e$ and $D^b_{11}=(1/p)e^T\crc([b])e$  where $e$ is a vector of all ones. Therefore, the first diagonal elements of $D^a$ and ${D^b}$ are respectively given by
		\[
		D^a_{11}=\sum_{i=1}^{p}[a]_{::i}
		\quad
		{\rm and}
		\quad
		D^b_{11}=\sum_{i=1}^{p}[b]_{::i}.
		\]
		Since $\crc([a])-\crc([b])=F_{p}({D^a}-{D^b})F_p^H$, the proof follows
		from the fact that ${D^a}_{11}$ and ${D^b}_{11}$ are equal.
	\end{proof}
	
	\subsection{Relation between eigentuple and T-eigenvalue with tubular eigenvalue} 
	This subsection is devoted to establishing links between different definitions of
	T-eigenvalues and eigentuples with tubular eigenvalues of a tensor.
	The first result  shows a relation between eigentuples and  tubular eigenvalues
	of a tensor.
	
	\begin{prop}
		Let $\mathscr{A}$  be a complex tensor  of size $n\times n  \times p$ and  suppose
		that $d$  is an arbitrary  eigentuple of $\mathscr{A}$ with  associated 
		eigenmatrix $X$. Moreover,  assume that the tubular tensor  $[\lambda]$ and tensor
		$\mathscr{X}\in \mathbb{C}^{n\times 1 \times p}$ are defined such that
		\[
		[\lambda]_{111}=d_1 \quad {\rm and} \quad [\lambda]_{11i}=d_{p-i+2},~ {\rm for}~ i=2,3,\ldots,p
		\]
		and $ \ufld(\mathscr{X})=\text{vec}(X).  $ If
		$\mathscr{X}^H \ast \mathscr{X}$ is non-singular, then $[\lambda]$ is a
		tubular eigenvalue of $\mathscr{A}$ with corresponding eigentensor
		$\mathscr{X}$.
	\end{prop}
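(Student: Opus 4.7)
The plan is to establish $\cA \ast \cX = \cX \ast [\lambda]$ directly by translating both sides into matrix identities on $X$ and invoking the eigentuple hypothesis $\cA \ast_m X = X\,\crc([d])$. Since $\ufld(\cX)=\text{vec}(X)$, the $k$th frontal slice of $\cX$ is the $k$th column $X_k$ of $X$, and the reshape from an $n\times 1\times p$ tensor to an $n\times p$ matrix is just the operation of listing frontal slices as columns. The eigentuple assumption thus already supplies the matrix version of the left-hand side of \eqref{eq:MainDef}; what remains is to put $\cX\ast [\lambda]$ into matrix form and match it with $X\,\crc([d])$.

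First I would expand $\cX\ast [\lambda] = \fld(\bcrc(\cX)\,\ufld([\lambda]))$ block-by-block. Reading off the block-circulant structure, the $k$th frontal slice of the product is $\sum_{i=1}^{p} X_i\,\lambda_{((k-i)\bmod p)+1}$. Viewing these slices as the columns of an $n\times p$ matrix, this is precisely $X\,M$, where $M_{ik} = \lambda_{((k-i)\bmod p)+1}$. A direct comparison with \eqref{eq:circMat} identifies $M$ as $\crc([\lambda])^T$; explicitly, $M$ is the circulant whose first column is the first row of $\crc([\lambda])$, namely $(\lambda_1,\lambda_p,\lambda_{p-1},\ldots,\lambda_2)^T$.

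Next I would invoke the specified indexing of $[\lambda]$. By construction $(\lambda_1,\lambda_p,\lambda_{p-1},\ldots,\lambda_2)^T = (d_1,d_2,\ldots,d_p)^T$, so the first column of $\crc([\lambda])^T$ coincides with the first column of $\crc([d])$, forcing $\crc([\lambda])^T = \crc([d])$. Therefore $\cA\ast_m X = X\,\crc([d]) = X\,\crc([\lambda])^T$, and since both sides equal the reshape of an $n\times 1\times p$ tensor, we deduce $\cA\ast\cX = \cX\ast[\lambda]$. The nonsingularity of $\cX^H\ast\cX$ is assumed as a hypothesis, so Definition~\ref{eig:def} qualifies $[\lambda]$ as a tubular eigenvalue of $\cA$ with eigentensor $\cX$.

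The main obstacle is the index bookkeeping in the second step: because the convention \eqref{eq:circMat} increases indices downward along the first column but decreases them along the first row, care is required to recognize the matrix coming from $\bcrc(\cX)\,\ufld([\lambda])$ as a \emph{transposed} circulant in $[\lambda]$ rather than $\crc([\lambda])$ itself. This transpose is exactly what the nonstandard reindexing $\lambda_i = d_{p-i+2}$ in the hypothesis is designed to absorb, so once the identification $M = \crc([\lambda])^T$ is made, the remainder of the argument reduces to comparing first columns of two circulants.
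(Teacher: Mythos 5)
Your proof is correct and follows essentially the same route as the paper: the paper vectorizes the eigentuple relation as $\bcrc(\mathscr{A})\,\mathrm{vec}(X)=(\crc([d])^T\otimes I_n)\,\mathrm{vec}(X)$ and rewrites Definition~\ref{eig:def} as $\bcrc(\mathscr{A})\,\ufld(\mathscr{X})=(\crc([\lambda])\otimes I_n)\,\ufld(\mathscr{X})$, which is exactly your slice-as-columns computation transported through $\mathrm{vec}(XM)=(M^T\otimes I_n)\mathrm{vec}(X)$. Your explicit verification that $\crc([\lambda])=\crc([d])^T$ via the reindexing $\lambda_i=d_{p-i+2}$ is the same identification the paper leaves implicit, so the argument matches.
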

	
	\begin{proof}
		In view of Definition \ref{def:QiZh}, one can see that
		$\bcrc(\mathscr{A})\text{vec}(X)=(\crc([d])^T\otimes I_n)\text{vec}(X).$
		The assertion follows immediately  in view of  Definition \ref{eig:def} which
		can be rewritten as:
		\[
		\bcrc ( \cA ) \ufld(\cX) = (\crc ([\lambda]) \otimes I_{n}) \ufld(\cX).
		\]
	\end{proof}
	
	The proof of the  previous proposition also leads to the following  proposition.
	
	\begin{prop}
		Let $\mathscr{A}$ be a complex tensor of size $n\times n \times p$. Assume
		that $[\lambda]$ is an arbitrary tubular eigenvalue of $\mathscr{A}$ with the
		corresponding eigentensor $\mathscr{X}$. If the vector $d$ and matrix
		$X\in \mathbb{C}^{n\times p}$ satisfy
		\[
		d_1=	[\lambda]_{111} \quad {\rm and} \quad d_{i}=[\lambda]_{11{(p-i+2)}},~ {\rm for}~ i=2,3,\ldots,p
		\]
		and 
		$\text{vec}(X)=	\ufld(\mathscr{X}).$ Then, the vector $d$ is an eigentuple of $\mathscr{A}$ with the corresponding eigenmatrix $X$.
	\end{prop}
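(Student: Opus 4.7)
The plan is to reverse the argument used in the preceding proposition. By Definition \ref{eig:def}, the hypothesis that $[\lambda]$ is a tubular eigenvalue of $\mathscr{A}$ with eigentensor $\mathscr{X}$ means $\mathscr{A}\ast\mathscr{X}=\mathscr{X}\ast[\lambda]$, and applying $\ufld$ to both sides (and using the definition of the T-product, together with the fact that the T-product of $\mathscr{X}$ with the tubular $[\lambda]$ corresponds to multiplying $\ufld(\mathscr{X})$ by $\crc([\lambda])\otimes I_n$) yields the equivalent matrix identity
\[
\bcrc(\mathscr{A})\,\ufld(\mathscr{X}) = \bigl(\crc([\lambda])\otimes I_n\bigr)\ufld(\mathscr{X}).
\]
This is exactly the relation used at the end of the proof of the previous proposition, just read in the reverse direction.

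Next, I would verify the key identity $\crc([\lambda]) = \crc([d])^T$. By the given construction, $d = (d_1,d_2,\ldots,d_p)^T = ([\lambda]_{111},[\lambda]_{11p},[\lambda]_{11(p-1)},\ldots,[\lambda]_{112})^T$. Using the explicit form of a circulant matrix from \eqref{eq:circMat}, the $(i,j)$-entry of $\crc([d])$ is $d_{((i-j)\bmod p)+1}$, while the $(i,j)$-entry of $\crc([\lambda])$ is $[\lambda]_{11,((i-j)\bmod p)+1}$. Setting $k=((i-j)\bmod p)+1$, one checks that for $k=1$ both entries equal $[\lambda]_{111}$, while for $k\ge 2$ we have $\crc([d])_{ij} = d_k = [\lambda]_{11(p-k+2)}$, which matches $\crc([\lambda])_{ji}$ since $((j-i)\bmod p)+1 = p-k+2$ in this range. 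Hence $\crc([d])^T = \crc([\lambda])$.

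Substituting this identity and using $\text{vec}(X)=\ufld(\mathscr{X})$, the displayed relation becomes
\[
\bcrc(\mathscr{A})\,\text{vec}(X) = \bigl(\crc([d])^T\otimes I_n\bigr)\text{vec}(X).
\]
Applying the standard Kronecker identity $\text{vec}(XC) = (C^T\otimes I_n)\text{vec}(X)$ with $C=\crc([d])$, the right-hand side equals $\text{vec}(X\,\crc([d]))$, while the left-hand side is by construction $\text{vec}(\mathscr{A}\ast_m X)$. Equating and removing $\text{vec}$ gives $\mathscr{A}\ast_m X = X\,\crc([d])$, which is precisely Definition~\ref{def:QiZh} certifying $d$ as an eigentuple with eigenmatrix $X$.

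The only delicate step is the index-book\-keeping establishing $\crc([d])^T=\crc([\lambda])$; everything else is a direct application of the $\bcrc$/$\ufld$ dictionary together with the Kronecker vec-identity, so the argument amounts essentially to reading the previous proof backwards.
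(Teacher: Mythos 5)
Your proof is correct and follows exactly the route the paper intends: the paper gives no separate argument for this converse, stating only that it follows from the proof of the preceding proposition, and your write-up (unfolding the tubular eigenvalue relation to $\bcrc(\mathscr{A})\ufld(\mathscr{X})=(\crc([\lambda])\otimes I_n)\ufld(\mathscr{X})$, checking $\crc([d])^T=\crc([\lambda])$, and applying the vec--Kronecker identity to recover $\mathscr{A}\ast_m X = X\,\crc([d])$) is precisely that argument read in reverse, with the index bookkeeping made explicit.
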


	The following theorem reveals that each tubular eigenvalue of
	$\mathscr{A} \in \mathbb{C}^{n\times n \times p}$ corresponds to $p$ of its
	T-eigenvalues.
	
	\begin{thm}\label{th3.4}
		Let $[\lambda]$ be an arbitrary tubular eigenvalue of $\mathscr{A}$. If $F_p$
		is the DFT matrix of order $p$ and
		\[
		F_p^H \crc([\lambda]) F_p ={\rm diag}(\tilde{\lambda}_1,\tilde{\lambda}_2,\ldots,\tilde{\lambda}_p)
		\]
		then $\tilde{\lambda}_1,\tilde{\lambda}_2,\ldots,\tilde{\lambda}_p$ are T-eigenvalues of $\mathscr{A}$.
	\end{thm}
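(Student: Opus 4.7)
The plan is to convert the tubular eigenvalue identity \eqref{eq:MainDef} into an equivalent block-circulant matrix identity, and then block-diagonalize by the DFT so that it decouples into $p$ ordinary eigenvalue problems, one per Fourier block. Each such problem will pin the scalar $\tilde{\lambda}_i$ to an eigenvalue of the $i$th diagonal block of $\bcrc(\mathscr{A})$, and by the characterization recalled just after Definition~\ref{def:Tevalue}, the T-eigenvalues of $\mathscr{A}$ are exactly the eigenvalues of $\bcrc(\mathscr{A})$.

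Concretely, I would first apply the multiplicative property $\bcrc(\mathscr{A}\ast\mathscr{B})=\bcrc(\mathscr{A})\bcrc(\mathscr{B})$ to both sides of $\mathscr{A}\ast \mathscr{X}=\mathscr{X}\ast [\lambda]$, noting that $\bcrc([\lambda])=\crc([\lambda])$ because the frontal slices of $[\lambda]$ are scalars. This gives
\[\bcrc(\mathscr{A})\,\bcrc(\mathscr{X})=\bcrc(\mathscr{X})\,\crc([\lambda]).\]
I would then substitute the block diagonalizations
\[\bcrc(\mathscr{A})=(F_p\otimes I_n)\,\mathrm{blockdiag}(\tilde{A}_1,\ldots,\tilde{A}_p)\,(F_p^H\otimes I_n),\]
\[\bcrc(\mathscr{X})=(F_p\otimes I_n)\,\mathrm{blockdiag}(\tilde{x}_1,\ldots,\tilde{x}_p)\,F_p^H,\]
coming from \eqref{bld}, together with the given factorization $\crc([\lambda])=F_p\,\mathrm{diag}(\tilde{\lambda}_1,\ldots,\tilde{\lambda}_p)\,F_p^H$. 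The outer Fourier factors cancel, and reading the resulting block-diagonal identity block by block yields $\tilde{A}_i\,\tilde{x}_i=\tilde{\lambda}_i\,\tilde{x}_i$ for $i=1,\ldots,p$.

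The only possible obstacle is ensuring every $\tilde{x}_i$ is nonzero, since otherwise the relation above would not certify $\tilde{\lambda}_i$ as a genuine eigenvalue. This is exactly where the hypothesis that $\mathscr{X}^H\ast\mathscr{X}$ is non-singular, required in Definition~\ref{eig:def}, enters: the discussion immediately following that definition already shows this hypothesis is equivalent to $\tilde{x}_i\ne 0$ for all $i$. Given this, each $\tilde{\lambda}_i$ is an eigenvalue of $\tilde{A}_i$, hence of the full block-diagonal matrix, hence of $\bcrc(\mathscr{A})$, and therefore a T-eigenvalue of $\mathscr{A}$. The remainder of the argument is essentially DFT bookkeeping; the only subtle point is the nonvanishing of the $\tilde{x}_i$, which the non-singularity hypothesis handles at once.
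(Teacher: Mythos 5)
Your argument is correct, and it takes a genuinely different route from the paper. The paper first passes to the T-Jordan Canonical Form $\mathscr{A}=\mathscr{P}^{-1}\ast\mathscr{J}\ast\mathscr{P}$ (Theorem~\ref{thm:TJCF}), rewrites $\mathscr{J}\ast\mathscr{Y}=\mathscr{Y}\ast[\lambda]$ with $\mathscr{Y}=\mathscr{P}\ast\mathscr{X}$ via the unfold identity $\bcrc(\mathscr{J})\ufld(\mathscr{Y})=\bcrc(\mathscr{Y})\crc([\lambda])e_1$, and then reads off $C_i\tilde z_i=\tilde\lambda_i\tilde z_i$ for the upper triangular Fourier blocks $C_i$ of $\bcrc(\mathscr{J})$, appealing to the Miao-type description of T-eigenvalues as diagonal entries of the $C_i$. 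You bypass the TJCF entirely: you apply $\bcrc$ to both sides of \eqref{eq:MainDef}, use the block-diagonalization \eqref{bld} of $\bcrc(\mathscr{A})$ and $\bcrc(\mathscr{X})$ together with $\crc([\lambda])=F_p\,\mathrm{diag}(\tilde\lambda_i)F_p^H$, and decouple into $\tilde A_i\tilde x_i=\tilde\lambda_i\tilde x_i$, invoking the equivalence (stated after Definition~\ref{def:Tevalue}) between T-eigenvalues and eigenvalues of $\bcrc(\mathscr{A})$. Your version is more elementary in that it needs no canonical form, and it has the merit of making explicit the one point the paper's proof leaves implicit: that the non-singularity of $\mathscr{X}^H\ast\mathscr{X}$ forces every $\tilde x_i$ (equivalently every $\tilde z_i$ in the paper's notation, since $\bcrc(\mathscr{P})$ is non-singular) to be nonzero, which is what certifies each $\tilde\lambda_i$ as a genuine eigenvalue; it also yields the enumeration of Remark~\ref{rem:enum} essentially for free. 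The one ingredient you use that the paper never states is the multiplicativity $\bcrc(\mathscr{A}\ast\mathscr{B})=\bcrc(\mathscr{A})\bcrc(\mathscr{B})$; it is standard (Kilmer--Martin) and easy, but you should either cite it or sidestep it by unfolding instead, i.e.\ $\ufld(\mathscr{A}\ast\mathscr{X})=\bcrc(\mathscr{A})\ufld(\mathscr{X})$ and $\ufld(\mathscr{X}\ast[\lambda])=\bcrc(\mathscr{X})\crc([\lambda])e_1$, which is exactly the device the paper employs. What the paper's TJCF route buys in exchange is a direct link to the definition of T-eigenvalues used in \cite{Miao,Miao2021}, keeping the statement aligned with both notions simultaneously.
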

	
	\begin{proof}
		Let $[\lambda]$ be a tubular eigenvalue of $\mathscr{A}$. Therefore, there exists a nonzero tensor $\mathscr{X}$ such that $\mathscr{A} \ast \mathscr{X} = \mathscr{X} \ast [\lambda]$.
		Considering the TJCF of $\mathscr{A}$, i.e., $\mathscr{A} = \mathscr{P}^{-1} \ast \mathscr{J} \ast
		\mathscr{P}$, we have 
		\[
		\mathscr{J} \ast \mathscr{Y}  =\mathscr{Y} \ast [\lambda]
		\]
		where $\mathscr{Y}=\mathscr{P} \ast \mathscr{X}$. The above relation is equivalent to
		\begin{eqnarray}
			\nonumber 	\bcrc(\mathscr{J}) \ufld(\mathscr{Y}) & = & 	\bcrc(\mathscr{Y}) \ufld([\lambda]) \\
			& = & \bcrc(\mathscr{Y}) {\rm circ}([\lambda]) e_1 \label{eq3}
		\end{eqnarray}
		as before, $e_1$ denotes the first column of the identity matrix of size
		$p$. It is known that
		\begin{equation}\label{eq4n}
			{\bcrc(\mathscr{J})} =(F_p \otimes I_n)  {\rm blockdiag(C_1,C_2,\ldots,C_p)}(F_p^H  \otimes I_n).
		\end{equation}
		The above relation together with \eqref{eq3} imply that
		\begin{equation}\label{eq4}
			{\rm blockdiag(C_1,C_2,\ldots,C_p)} (F_p^H \otimes I_n) \bcrc(\mathscr{Y}) F_pF_p^He_1=(F_p^H \otimes I_n) \bcrc(\mathscr{Y}) F_pF_p^H{\rm circ}([\lambda])F_pF_p^H e_1
		\end{equation}
		To simplify notation, we define the following $np \times p$ block diagonal matrix
		$$Z:=(F_p^H \otimes I_n) \bcrc(\mathscr{Y})F_p = {\rm blockdiag} (\tilde{z}_1,\tilde{z}_2,\ldots,\tilde{z}_p)$$
		where $\tilde{z}_1,\tilde{z}_2,\ldots,\tilde{z}_p$ are complex vectors of size $p$. Multiplying both sides of 
		\eqref{eq4} by $\sqrt{p}$, since $\sqrt{p} F^H_pe=(1,1,\ldots,1)^T$, we get:
		\[\left( {\begin{array}{*{20}{c}}
				{{C_1}}&{}&{}&{}\\
				{}&{{C_2}}&{}&{}\\
				{}&{}& \ddots &{}\\
				{}&{}&{}&{{C_p}}
		\end{array}} \right)\left( {\begin{array}{*{20}{c}}
				{{\tilde{z}_1}}\\
				{{\tilde{z}_2}}\\
				\vdots \\
				{{\tilde{z}_p}}
		\end{array}} \right) = \left( {\begin{array}{*{20}{c}} {{\tilde{z}_1}}&{}&{}&{}\\
				{}&{{\tilde{z}_2}}&{}&{}\\
				{}&{}& \ddots &{}\\
				{}&{}&{}&{{\tilde{z}_p}}
		\end{array}} \right)\left( {\begin{array}{*{20}{c}}
				{{\tilde{\lambda} _1}}\\
				{{\tilde{\lambda} _2}}\\
				\vdots \\
				{{\tilde{\lambda} _p}}
		\end{array}} \right)
		\]
		which is equivalent to saying that
		$C_i \tilde{z}_i = \tilde{\lambda}_i \tilde{z}_i$ for
		$i=1,2,\ldots,p$.  The assertion follows from the
		definition of T-eigenvalue.
	\end{proof}
	
	\begin{rem}\label{rem:enum}
		Consider the TJCF of $\mathscr{A}$, i.e,
		$\mathscr{A}=\mathscr{P}^{-1}\ast \mathscr{J} \ast \mathscr{P}$.  Assume that
		the matrices $C_1,C_2,\ldots,C_p$ are defined by \eqref{eq4n}.  Let
		$\tilde{\lambda}_i$ be an arbitrary eigenvalue of
		$C_i\in \mathbb{C}^{n\times n}$ with associated eigenvector $\tilde{z}_i$ for
		$i=1,2,\ldots,p$.  From the proof of the above theorem, it is not difficult to
		verify that one can also associate a tubular eigenvalue $[\lambda]$ to an
		arbitrary given set of eigenvalues
		$\tilde{\lambda}_1,\tilde{\lambda}_2,\ldots,\tilde{\lambda}_p$.  Notice that
		each $C_i$ has at most $n$ eigenvalues. Therefore, the total possible number of
		tubular eigenvalues of an $n\times n \times p$ tensor is $n^p$.
	\end{rem}
	
	The following proposition which is an immediate consequence of Theorem \ref{th3.4}
	extends a well-known result on the spectra of the products of two
	matrices. It should be noted that the proposition can be directly proved
	without considering the link between tubular eigenvalues and T-eigenvalues.
	
	\begin{prop}\label{commute:eig2}
		Let $\mathscr{A}$ and $\mathscr{B}$ be tensors of size $n\times n \times p$. Then,
		\[
		\bar{\sigma}_T(\mathscr{A}\ast \mathscr{B})={\bar \sigma_T}(\mathscr{B}\ast \mathscr{A})
		\]
		where $\bar{\sigma}_T(\mathscr{W})$ stands for the set of all non-singular
		tubular eigenvalues of $\mathscr{W}$.
	\end{prop}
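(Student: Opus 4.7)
The plan is to reduce the statement to the classical matrix identity $\sigma(MN)\setminus\{0\}=\sigma(NM)\setminus\{0\}$ applied block-by-block in the DFT-domain representation \eqref{bld}. Since $\bcrc$ sends the T-product to the ordinary matrix product, we have $\bcrc(\mathscr{A}\ast\mathscr{B})=\bcrc(\mathscr{A})\bcrc(\mathscr{B})$, and block-diagonalization immediately gives $(F_p\otimes I_n)\,{\rm blockdiag}(\tilde{A}_i\tilde{B}_i)\,(F_p^H\otimes I_n)$. Hence the DFT-domain blocks of $\mathscr{A}\ast\mathscr{B}$ are the $n\times n$ matrices $\tilde{A}_i\tilde{B}_i$, while those of $\mathscr{B}\ast\mathscr{A}$ are $\tilde{B}_i\tilde{A}_i$, for $i=1,2,\ldots,p$.

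Next I would use Theorem~\ref{th3.4} together with Remark~\ref{rem:enum} to encode every tubular eigenvalue of an $n\times n\times p$ tensor $\mathscr{W}$ as a $p$-tuple $(\tilde{\lambda}_1,\ldots,\tilde{\lambda}_p)$ with $\tilde{\lambda}_i\in\sigma(\tilde{W}_i)$, the tube $[\lambda]$ being recovered from this tuple via the inverse DFT as in \eqref{eq:fofv}. Although the theorem is phrased in terms of the TJCF blocks $C_i^W$, these are similar to, and therefore share the spectrum of, the DFT blocks $\tilde{W}_i$, because the similarity $\mathscr{W}=\mathscr{P}^{-1}\ast\mathscr{J}\ast\mathscr{P}$ transports after block-diagonalization to a similarity within each DFT block. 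Finally, a tubular eigenvalue $[\lambda]$ lies in $\bar{\sigma}_T(\mathscr{W})$ precisely when $\crc([\lambda])$ is invertible, which by the DFT-diagonalization is equivalent to all of the $\tilde{\lambda}_i$ being non-zero.

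Combining these two observations, the non-singular tubular eigenvalues of $\mathscr{A}\ast\mathscr{B}$ are parametrized by $p$-tuples drawn from $\sigma(\tilde{A}_i\tilde{B}_i)\setminus\{0\}$, while those of $\mathscr{B}\ast\mathscr{A}$ use tuples from $\sigma(\tilde{B}_i\tilde{A}_i)\setminus\{0\}$; applying the classical matrix identity in every DFT slot $i$ makes these two parameter sets coincide, and inverse-DFT produces the same tube $[\lambda]$ in both cases. The main obstacle is not the matrix identity itself but making the \emph{per-slot} correspondence precise: the proposition would fail if tubular eigenvalues only recorded multisets of T-eigenvalues rather than $p$-tuples indexed by the DFT slot, so one must verify that the bijection in Remark~\ref{rem:enum} is compatible with the block-diagonal identification above. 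This amounts to checking that a non-zero eigenvector $\tilde{z}_i$ of $\tilde{A}_i\tilde{B}_i$ assembles through the inverse block-diagonalization into an eigentensor $\mathscr{X}$ with $\mathscr{X}^H\ast\mathscr{X}$ non-singular, which is immediate because all $p$ DFT slices of $\mathscr{X}$ can be chosen non-zero.
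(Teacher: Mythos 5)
Your argument is correct and follows exactly the route the paper intends: the paper states this proposition without proof as an ``immediate consequence of Theorem~\ref{th3.4}'', i.e.\ the identification of tubular eigenvalues with slot-indexed $p$-tuples of eigenvalues of the DFT-domain blocks (equivalently of the $C_i$, to which they are similar), combined with the classical fact $\sigma(MN)\setminus\{0\}=\sigma(NM)\setminus\{0\}$ applied in each slot. Your elaboration — including the observation that the correspondence must be per DFT slot rather than a mere multiset of T-eigenvalues, and that the assembled eigentensor has non-singular $\mathscr{X}^H\ast\mathscr{X}$ since every DFT slice is a non-zero eigenvector — correctly fills in the details the paper omits.
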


	
	\subsection{Tubular eigenvalues of Hermitian tensors}
	In this section, we study the
	properties of tubular eigenvalues of Hermitian tensors.
	
	Let $[\lambda]$ be an arbitrary tubular eigenvalue of
	a Hermitian tensor $\mathscr{A} \in \mathbb{C}^{n\times n \times p}$.
	There exists an eigentensor $\mathscr{X}$ such that
	$ \mathscr{A} \ast \mathscr{X} = \mathscr{X} \ast [\lambda].  $
	Therefore, recalling the definition of the square root  of tubular tensors
	seen at the end of Subsection~\ref{sec:basics}, we can write: 
	\[
	\mathscr{Z}^H \ast \mathscr{A} \ast \mathscr{Z}=   [\lambda]
	\]
	where $\mathscr{Z}=\mathscr{X} \ast (\mathscr{X}^H
	\ast\mathscr{X})^{-1/2}$. A direct consequence of the above relation is that 
	tubular eigenvalues of a Hermitian tensor are Hermitian tubular
	tensors. Now consider the following decomposition of $\mathscr{A}$,
	\begin{equation}\label{bloc:diag}
		\bcrc(\mathscr{A})=(F_p\otimes I_n){\rm blockdiag}(\tilde{A}_1,\tilde{A}_2,\ldots,\tilde{A}_p)(F_p^H\otimes I_n).
	\end{equation}
	Notice that the block matrices $\tilde{A}_1,\tilde{A}_2,\ldots,\tilde{A}_p$ are Hermitian. This ensures the existence of  diagonal matrices $\tilde{D}_1,\tilde{D}_2,\ldots,\tilde{D}_p$ and unitary matrices  $\tilde{Q}_1,\tilde{Q}_2,\ldots,\tilde{Q}_p$
	such that
	\begin{equation}\label{diag1}
		\tilde{A}_i=\tilde{Q}_i^H \tilde{D}_i\tilde{Q}_i, \quad \text{for} \quad i=1,2,\ldots,p.
	\end{equation}
	Straightforward computations reveal that
	\begin{eqnarray*}
		\bcrc(\mathscr{A}) &= &	(F_p\otimes I_n){\rm blockdiag}(\tilde{Q}_1^H,\tilde{Q}_2^H,\ldots,\tilde{Q}_p^H)(F_p^H\otimes I_n)(F_p\otimes I_n){\rm blockdiag}(\tilde{D}_1,\tilde{D}_2,\ldots,\tilde{D}_p)\\
		&&	(F_p^H\otimes I_n)(F_p\otimes I_n){\rm blockdiag}(\tilde{Q}_1,\tilde{Q}_2,\ldots,\tilde{Q}_p)(F_p^H\otimes I_n)\\
		&= &	\bcrc(\mathscr{Q}^H)\bcrc(\mathscr{D})\bcrc(\mathscr{Q}).
	\end{eqnarray*}
	Equivalently, we have
	\begin{equation}\label{udiag}
		\mathscr{A} =\mathscr{Q}^H \ast \mathscr{D} \ast \mathscr{Q},
	\end{equation}
	where $\mathscr{Q}$ and $\mathscr{D}$ are respectively  unitary and F-diagonal tensors. From Eq. \eqref{udiag}, it is follows immediately that
	\[
	\mathscr{A} \ast \mathscr{Z}_i=\mathscr{Z}_i\ast [\mathscr{D}(i,i,:)]
	\]
	where $\mathscr{Z}_i=\mathscr{Q}^H(:,i,:)$ for $i=1,2,\ldots,n$. In fact,
	tensors $ [\mathscr{D}(1,1,:)],[\mathscr{D}(2,2,:)],\ldots,[\mathscr{D}(n,n,:)]$
	are tubular eigenvalues of $\mathscr{A}$.  Without loss of generality, we may
	assume that eigenvalues of each $\tilde{D}_i$ ($1\le i \le p$) are labeled in
	increasing order. That is, we consider the case where
	$\tilde{D}_i={\rm
		diag}(\tilde{d}_1^{(i)},\tilde{d}_2^{(i)},\ldots,\tilde{d}_n^{(i)})$ with
	\begin{equation}\label{ordered:diag}
		\tilde{d}_1^{(i)}\le \tilde{d}_2^{(i)}\le \cdots \le \tilde{d}_n^{(i)}, \quad \text{for} \quad i=1,2,\ldots,p.
	\end{equation}
	Notice that 
	\[
	[\mathscr{D}(j,j,:)] =\mathscr{E}_j^T\ast \mathscr{D} \ast \mathscr{E}_j
	\]
	where $\mathscr{E}_j=\mathscr{I}_{nnp}(:,j,:)$ for $j=1,2,\ldots,n$. It is not difficult to verify that
	\[
	\crc([\mathscr{D}(i,i,:)])=F_p {\rm diag}(\tilde{d}_i^{(1)},\tilde{d}_i^{(2)},\ldots,\tilde{d}_i^{(p)})F_p^H\quad \text{for} \quad i=1,2,\ldots,n.
	\] 
	Therefore, we can observe that
	\begin{equation}\label{order}
		[\mathscr{D}(i,i,:)] \preceq 	[\mathscr{D}(j,j,:)]\quad \text{for} \quad i\le j.
	\end{equation}
	To simplify notation, we denote $[\mathscr{D}(i,i,:)]$ by $[\lambda_i]$ for
	$i=1,2,\ldots,n$. From the relation \eqref{order}, it turns out that we have $n$
	tubular eigenvalues $[\lambda_1], [\lambda_2], \ldots, [\lambda_n]$ of the
	Hermitian tensor $\mathscr{A}$ with corresponding eigentensors
	$\mathscr{Z}_1,\mathscr{Z}_2,\ldots,\mathscr{Z}_n$ such that
	\begin{equation}\label{ordered:eig}
		([\lambda_m(\mathscr{A})]:=)[\lambda_1] \preceq [\lambda_2] \preceq \cdots \preceq [\lambda_n]:(=[\lambda_M(\mathscr{A})]).
	\end{equation}
	
	In Remark \ref{rem:enum}, it was observed that the total number of tubular eigenvalues of $\mathscr{A}$ is more than $n$. However, one can verify that the following relation holds for any tubular eigenvalue $[\lambda]$ of the Hermitian tensor $\mathscr{A}$,
	\[
	[\lambda_m(\mathscr{A})] \preceq [\lambda] \preceq [\lambda_M(\mathscr{A})].
	\]
	Let $\lambda_{\min}(\tilde{A}_i)$ and $\lambda_{\max}(\tilde{A}_i)$
	be the extreme eigenvalues of $\tilde{A}_i$  for $i=1,2,\ldots,p$ where $\tilde{A}_1,\tilde{A}_2,\ldots,\tilde{A}_p$ satisfy in \eqref{bloc:diag}.
	It is not difficult to see that by Theorem \ref{th3.4} 
	when $\mathscr{A}$ is Hermitian, then
	\[
	F_P^H\crc[\lambda_m(\mathscr{A})]F_p=
	\text{diag}(\lambda_{\min}(\tilde{A}_1),\lambda_{\min}(\tilde{A}_2),\ldots,\lambda_{\min}(\tilde{A}_p))
	\]
	and
	\[
	F_P^H\crc[\lambda_M(\mathscr{A})]F_p= \text{diag}(\lambda_{\max}(\tilde{A}_1),\lambda_{\max}(\tilde{A}_2),\ldots,\lambda_{\max}(\tilde{A}_p)).
	\]
	Suppose that $A$ and $B$ are two Hermitian matrices. The following two inequalities are
	consequences of Weyl's Theorem in the matrix case,
	\begin{eqnarray*}
		\lambda_{\max}(A+B) & \leq & \lambda_{\max}(A) + \lambda_{\max}(B),\\
		\lambda_{\min}(A+B) & \geq & \lambda_{\min}(A) + \lambda_{\min}(B).
	\end{eqnarray*}
	
	In view of  the earlier discussion of this subsection, we infer
	the next proposition which extends the above relations to tubular tensors.

	\begin{prop}\label{prop:eig1}
		Let $\mathscr{A}$ and $\mathscr{B}$ be two Hermitian tensors. If $[\lambda]$ is an arbitrary 
		tubular eigenvalue of $\mathscr{A}+\mathscr{B}$, then
		$
		\lambda_m(\mathscr{A}) +\lambda_m(\mathscr{B})	\preceq [\lambda] \preceq \lambda_M(\mathscr{A}) +\lambda_M(\mathscr{B}) .
		$
	\end{prop}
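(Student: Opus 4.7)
The plan is to transport the inequality from scalar Weyl inequalities on the diagonalized blocks back to the tubular tensor ordering via the DFT. The key observation is that the tubular partial order $\preceq$ between Hermitian tubular tensors is controlled entrywise by the diagonal of their DFT conjugation: by Proposition~\ref{prop1.4}, a Hermitian tubular tensor $[v]$ satisfies $[v] \succcurlyeq 0$ iff every diagonal entry of $F_p^H \crc([v]) F_p$ is nonnegative. So comparing two Hermitian tubular tensors $[u] \preceq [v]$ is equivalent to comparing the corresponding diagonal entries of $F_p^H \crc([u]) F_p$ and $F_p^H \crc([v]) F_p$ as real scalars.

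With this reduction in mind, the steps would be as follows. First, block-diagonalize $\bcrc(\mathscr{A})$ and $\bcrc(\mathscr{B})$ as in \eqref{bloc:diag}, yielding Hermitian blocks $\tilde{A}_i$ and $\tilde{B}_i$ for $i=1,\ldots,p$; since the transform $(F_p\otimes I_n)(\cdot)(F_p^H\otimes I_n)$ is linear, the corresponding blocks of $\bcrc(\mathscr{A}+\mathscr{B})$ are exactly $\tilde{A}_i+\tilde{B}_i$, each of which is Hermitian. Next, invoke Theorem~\ref{th3.4}: if $[\lambda]$ is a tubular eigenvalue of $\mathscr{A}+\mathscr{B}$ with $F_p^H \crc([\lambda]) F_p = \mathrm{diag}(\tilde{\lambda}_1,\ldots,\tilde{\lambda}_p)$, then each $\tilde{\lambda}_i$ is a T-eigenvalue of $\mathscr{A}+\mathscr{B}$, hence an eigenvalue of the Hermitian matrix $\tilde{A}_i+\tilde{B}_i$, so $\tilde{\lambda}_i$ is real.

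Now apply the classical matrix Weyl inequalities blockwise: for each $i$,
\begin{equation*}
\lambda_{\min}(\tilde{A}_i) + \lambda_{\min}(\tilde{B}_i) \le \tilde{\lambda}_i \le \lambda_{\max}(\tilde{A}_i) + \lambda_{\max}(\tilde{B}_i).
\end{equation*}
By the two formulas displayed just after \eqref{ordered:eig}, the left- and right-hand sides are exactly the $i$th diagonal entries of $F_p^H \crc([\lambda_m(\mathscr{A})+\lambda_m(\mathscr{B})]) F_p$ and $F_p^H \crc([\lambda_M(\mathscr{A})+\lambda_M(\mathscr{B})]) F_p$, respectively (using that DFT-diagonalization is additive). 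Applying the characterization of $\preceq$ recalled in the first paragraph then yields the two desired inequalities simultaneously.

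The only genuinely subtle point, and what I would treat most carefully, is the step where $\preceq$ between tubular tensors is reduced to entrywise comparison of the DFT-diagonal entries. This requires noting that the difference of two Hermitian tubular tensors is again Hermitian (so Proposition~\ref{prop1.4} applies) and that $\crc([u]-[v]) = \crc([u]) - \crc([v])$ together with linearity of the DFT-conjugation shows that the diagonal of $F_p^H \crc([u]-[v]) F_p$ is the entrywise difference. Once this bookkeeping is in place, the rest of the argument is a direct application of the scalar Weyl inequalities slice by slice.
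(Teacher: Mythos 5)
Your proposal is correct and takes essentially the same route the paper intends: the paper prints no proof (it is ``inferred from the earlier discussion''), and that discussion is exactly your argument --- blockwise Weyl inequalities in the Fourier domain, the displayed DFT-diagonal formulas for $[\lambda_m(\cdot)]$ and $[\lambda_M(\cdot)]$, and the characterization of $\preceq$ for Hermitian tubular tensors via Proposition \ref{prop1.4}. The only minor imprecision is that the fact that $\tilde{\lambda}_i$ is an eigenvalue of the $i$th block $\tilde{A}_i+\tilde{B}_i$ (not merely of some block) follows from the proof of Theorem \ref{th3.4} (or from unfolding $(\mathscr{A}+\mathscr{B})\ast\mathscr{X}=\mathscr{X}\ast[\lambda]$ in the Fourier domain) rather than from its bare statement, which is also how the paper itself uses that theorem.
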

	
	The following lemma provides lower and upper bounds for extreme eigenvalues of the product of two matrices under certain conditions, see \cite{Zhang} for the proof.
	
	\begin{lem}\label{lem2} 
		Suppose that $A$ is a Hermitian negative definite matrix and
		$B$ is Hermitian positive semidefinite. Then the eigenvalues of $AB$
		are real and satisfy
		\[{\lambda _{\min}}(A){\lambda _{\min}}(B) \le {\lambda _{\max}}(AB)
		\le {\lambda _{\max}}(A){\lambda _{\min}}(B),\]
		\[{\lambda _{\min}}(A){\lambda _{\max}}(B) \le {\lambda _{\min}}(AB)
		\le {\lambda _{\max}}(A){\lambda _{\max}}(B).\]
	\end{lem}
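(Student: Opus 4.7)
My plan is to reduce the problem to the Hermitian positive (semi)definite setting through a similarity transformation, and then use a Rayleigh-quotient argument to extract the eigenvalue bounds. Since $A$ is Hermitian negative definite, the matrix $-A$ is Hermitian positive definite and so admits a Hermitian positive definite square root, which I will denote $P := (-A)^{1/2}$. Writing $AB = -P^{2}B$ and conjugating by $P$, I obtain the similarity $AB \sim -P(PBP)P^{-1} \cdot $ (up to bookkeeping), so that $AB$ has the same spectrum as $-PBP$. Since $B\succcurlyeq 0$ and $P$ is Hermitian, $PBP$ is Hermitian positive semidefinite; hence the eigenvalues of $AB$ are real and non-positive, establishing the first claim of the lemma.

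Next, I would bound the extreme eigenvalues of $PBP$ via the Courant--Fischer characterization. Parametrizing with $y = Px$, so that $\|x\|^{2} = y^{H}(-A)^{-1}y$, gives
\[
\lambda_{\max}(PBP) \;=\; \max_{y\ne 0}\frac{y^{H}By}{y^{H}(-A)^{-1}y},\qquad
\lambda_{\min}(PBP) \;=\; \min_{y\ne 0}\frac{y^{H}By}{y^{H}(-A)^{-1}y}.
\]
For the upper bound on $\lambda_{\max}(PBP)$, I would plug in $y$ equal to an eigenvector $v$ of $B$ achieving $\lambda_{\max}(B)$ and use $v^{H}(-A)^{-1}v\ge 1/\lambda_{\max}(-A)$, yielding $\lambda_{\max}(PBP)\ge \lambda_{\max}(-A)\,\lambda_{\max}(B)$. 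For an upper bound I would instead split the ratio as $(y^{H}By/y^{H}y)\cdot(y^{H}y/y^{H}(-A)^{-1}y)$ and bound each factor separately to get $\lambda_{\max}(PBP)\le \lambda_{\max}(-A)\,\lambda_{\max}(B)$ -- well, the careful bookkeeping is exactly the same trick used symmetrically for $\lambda_{\min}$, substituting the eigenvectors of $B$ associated with $\lambda_{\min}(B)$ on one side and majorizing/minorizing $y^{H}(-A)^{-1}y$ by $1/\lambda_{\min}(-A)$ and $1/\lambda_{\max}(-A)$ respectively.

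Finally, I would translate all these inequalities back to $AB$ via the sign-flip relations
\[
\lambda_{\max}(AB) \;=\; -\lambda_{\min}(PBP),\qquad \lambda_{\min}(AB) \;=\; -\lambda_{\max}(PBP),
\]
together with $\lambda_{\max}(-A) = -\lambda_{\min}(A)$ and $\lambda_{\min}(-A) = -\lambda_{\max}(A)$. Multiplying each of the four Rayleigh-quotient bounds by $-1$ (and reversing the inequality) produces exactly the four inequalities claimed in the lemma.

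The main subtlety, and the place where one has to be careful, is sign tracking: the negative definiteness of $A$ forces $\lambda_{\min}(A)\le \lambda_{\max}(A)<0$ while $\lambda_{\min}(B),\lambda_{\max}(B)\ge 0$, so all four products on the right-hand sides are non-positive and inequalities reverse under the sign flip. I expect the only genuinely nontrivial step to be verifying that the extremal choice of test vector $y$ in the Rayleigh quotient for $PBP$ is achievable -- that is, that plugging in an eigenvector of $B$ is legitimate after the change of variables $y = Px$, which is justified because $P$ is invertible. The similarity argument in the first paragraph is standard, and the Rayleigh-quotient bounds are then a clean two-factor estimate; no further machinery beyond the spectral theorem for Hermitian matrices and the invertibility of $P$ is required.
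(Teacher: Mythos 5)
The paper gives no proof of this lemma at all (it simply points to \cite{Zhang}), so there is nothing internal to compare against; your self-contained argument is therefore welcome, and its skeleton is sound: with $P=(-A)^{1/2}$ one has $P^{-1}(AB)P=-PBP$, so the spectrum of $AB$ is real (indeed non-positive), and the four inequalities follow from Rayleigh-quotient estimates for $\lambda_{\max}(PBP)$ and $\lambda_{\min}(PBP)$ after the substitution $y=Px$, combined with $\lambda_{\max}(AB)=-\lambda_{\min}(PBP)$, $\lambda_{\min}(AB)=-\lambda_{\max}(PBP)$ and $\lambda_{\max}(-A)=-\lambda_{\min}(A)$, $\lambda_{\min}(-A)=-\lambda_{\max}(A)$. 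The two ``split the ratio'' estimates you describe are correct and already give $\lambda_{\min}(AB)\ge\lambda_{\min}(A)\lambda_{\max}(B)$ and $\lambda_{\max}(AB)\le\lambda_{\max}(A)\lambda_{\min}(B)$.

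However, one of your test-vector bounds is wrong as written and that step would fail. You claim that plugging a unit eigenvector $v$ of $B$ for $\lambda_{\max}(B)$ into $y^HBy/\,y^H(-A)^{-1}y$ and using $v^H(-A)^{-1}v\ge 1/\lambda_{\max}(-A)$ yields $\lambda_{\max}(PBP)\ge\lambda_{\max}(-A)\lambda_{\max}(B)$. That inequality bounds the denominator from \emph{below}, hence bounds the quotient at $v$ from \emph{above}; it cannot produce a lower bound on $\lambda_{\max}(PBP)$. Moreover the asserted conclusion is false in general (take $-A=\mathrm{diag}(2,1)$, $B=\mathrm{diag}(0,1)$: then $\lambda_{\max}(PBP)=1<2=\lambda_{\max}(-A)\lambda_{\max}(B)$), and if it were true it would translate into $\lambda_{\min}(AB)\le\lambda_{\min}(A)\lambda_{\max}(B)$, the reverse of what the lemma asserts. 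The correct pairing is: with the $\lambda_{\max}(B)$-eigenvector use $v^H(-A)^{-1}v\le 1/\lambda_{\min}(-A)$, giving $\lambda_{\max}(PBP)\ge\lambda_{\min}(-A)\lambda_{\max}(B)$ and hence $\lambda_{\min}(AB)\le\lambda_{\max}(A)\lambda_{\max}(B)$; with a $\lambda_{\min}(B)$-eigenvector $w$ use $w^H(-A)^{-1}w\ge 1/\lambda_{\max}(-A)$, giving $\lambda_{\min}(PBP)\le\lambda_{\max}(-A)\lambda_{\min}(B)$ and hence $\lambda_{\max}(AB)\ge\lambda_{\min}(A)\lambda_{\min}(B)$. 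With that repair (and tidying the garbled similarity expression in your first paragraph to $P^{-1}(AB)P=-PBP$), the proof is complete.
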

	
	Lemma \ref{lem2} can be easily adapted to tubular eigenvalues as is shown next.
	
	\begin{prop}\label{prop:eig2}
		Let $\mathscr{A}$ and $\mathscr{B}$ be Hermitian negative definite and Hermitian positive (semi-)definite tensors, respectively. If $[\lambda]\in \sigma_T(\mathscr{A} \ast \mathscr{B})$, then $[\lambda]$ is Hermitian negative (semi-)definite tensors and 
		\[
		[\lambda_m(\mathscr{A} \ast \mathscr{B})] \preceq  [\lambda] \preceq [\lambda_M(\mathscr{A} \ast \mathscr{B})]
		\]
		where
		\[[\lambda_m(\mathscr{A})]\ast [\lambda_m(\mathscr{B}) ]\preceq [\lambda_M(\mathscr{A} \ast \mathscr{B})]
		\preceq [\lambda_M(\mathscr{A})] \ast [\lambda_m(\mathscr{B})],\]
		\[[\lambda_m(\mathscr{A})]\ast [\lambda_M(\mathscr{B})] \preceq [\lambda_m(\mathscr{A} \ast \mathscr{B})]
		\preceq [\lambda_M(\mathscr{A})]\ast [\lambda_M(\mathscr{B})].\]
	\end{prop}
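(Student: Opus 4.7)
The plan is to transfer everything to the DFT block-diagonal representation of $\bcrc$, where the T-product becomes a blockwise matrix product, the tubular ordering $\preceq$ becomes the entrywise ordering on real diagonals, and the tubular tensors $[\lambda_m(\cdot)]$ and $[\lambda_M(\cdot)]$ become the diagonal matrices collecting the smallest and largest eigenvalues of each block. In that representation, the claimed inequalities reduce, block by block, to the scalar inequalities already provided by Lemma \ref{lem2}.

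Concretely, using \eqref{bloc:diag} I would write $\bcrc(\mathscr{A})=(F_p\otimes I_n)\,\text{blockdiag}(\tilde{A}_1,\ldots,\tilde{A}_p)(F_p^H\otimes I_n)$ and similarly for $\mathscr{B}$. Since $\mathscr{A}\succ 0$ or $\mathscr{A}\prec 0$ is equivalent to the same property for $\bcrc(\mathscr{A})$, and this property is inherited by each diagonal block, each $\tilde{A}_i$ is Hermitian negative definite and each $\tilde{B}_i$ is Hermitian positive (semi-)definite. Because $\bcrc(\mathscr{A}\ast\mathscr{B})=\bcrc(\mathscr{A})\bcrc(\mathscr{B})$, the $i$-th block of $\mathscr{A}\ast\mathscr{B}$ in this representation is precisely $\tilde{A}_i\tilde{B}_i$. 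Lemma \ref{lem2} then guarantees that the eigenvalues of each $\tilde{A}_i\tilde{B}_i$ are real and non-positive, and that
\[
\lambda_{\min}(\tilde{A}_i)\lambda_{\min}(\tilde{B}_i)\le\lambda_{\max}(\tilde{A}_i\tilde{B}_i)\le\lambda_{\max}(\tilde{A}_i)\lambda_{\min}(\tilde{B}_i),
\]
\[
\lambda_{\min}(\tilde{A}_i)\lambda_{\max}(\tilde{B}_i)\le\lambda_{\min}(\tilde{A}_i\tilde{B}_i)\le\lambda_{\max}(\tilde{A}_i)\lambda_{\max}(\tilde{B}_i),
\]
for $i=1,2,\ldots,p$.

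Next, I would invoke Theorem \ref{th3.4} and Remark \ref{rem:enum} to say that any tubular eigenvalue $[\lambda]$ of $\mathscr{A}\ast\mathscr{B}$ has its DFT-diagonal entries equal to one eigenvalue of each $\tilde{A}_i\tilde{B}_i$; since all such eigenvalues are non-positive, Proposition \ref{prop1.4} gives that $[\lambda]$ is Hermitian negative (semi-)definite. Combining this with the ordering discussion preceding \eqref{ordered:eig}, $[\lambda_M(\mathscr{A}\ast\mathscr{B})]$ and $[\lambda_m(\mathscr{A}\ast\mathscr{B})]$ correspond respectively to $\text{diag}(\lambda_{\max}(\tilde{A}_i\tilde{B}_i))$ and $\text{diag}(\lambda_{\min}(\tilde{A}_i\tilde{B}_i))$ in the DFT basis, and the sandwich $[\lambda_m(\mathscr{A}\ast\mathscr{B})]\preceq[\lambda]\preceq[\lambda_M(\mathscr{A}\ast\mathscr{B})]$ is then immediate.

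For the outer bounds, the key observation is that since the DFT simultaneously diagonalizes every circulant matrix, the T-product of two tubular tensors corresponds to the entrywise product of their DFT-diagonal entries. Hence $[\lambda_m(\mathscr{A})]\ast[\lambda_m(\mathscr{B})]$ corresponds to $\text{diag}(\lambda_{\min}(\tilde{A}_i)\lambda_{\min}(\tilde{B}_i))$, and analogously for the three other products appearing in the statement. Applying Proposition \ref{prop1.4} to translate entrywise scalar inequalities into $\preceq$ and inserting the blockwise bounds from Lemma \ref{lem2} then yields the two chains of tubular inequalities claimed. The main piece of care, and the only real obstacle, is keeping the bookkeeping straight between (i) the DFT-diagonal entries of $[\lambda_m]$ and $[\lambda_M]$ of each factor, (ii) the DFT-diagonal entries of their T-products (products, not sums), and (iii) the DFT-diagonal entries of $[\lambda_m]$ and $[\lambda_M]$ of $\mathscr{A}\ast\mathscr{B}$; once this correspondence is set up, the proof is a direct block-by-block application of Lemma \ref{lem2}.
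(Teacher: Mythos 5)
Your plan is correct and is exactly the route the paper intends: the paper gives no written proof, merely asserting that Lemma \ref{lem2} ``can be easily adapted'' via the preceding Fourier-domain discussion ($[\lambda_m]$, $[\lambda_M]$ as diagonals of block extreme eigenvalues, Theorem \ref{th3.4}, and $\bcrc(\mathscr{A}\ast\mathscr{B})=\bcrc(\mathscr{A})\bcrc(\mathscr{B})$), which is precisely the block-by-block reduction you spell out. No gaps worth flagging beyond the minor bookkeeping you already identify.
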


	\noindent Let $\mathscr{P}$ be a preconditioner for the tensor equation
	$\mathscr{A} \ast \mathscr{X}=\mathscr{B}$. Propositions \ref{prop:eig1} and
	\ref{prop:eig2} can be used to study the tubular spectrum of the preconditioned
	tensor $\mathscr{P} \ast \mathscr{A}$. In particular, the provided relations in
	Proposition \ref{prop:eig1} can be helpful when $\mathscr{P}$ is extracted from
	$\mathscr{A}$.
	
	Using the well-known Courant--Fischer Min-Max principle \cite[Theorem 1.21]{Saad2003}
	for Hermitian matrices, we can prove the following proposition.
	
	\begin{prop}
		Let $\mathscr{A}\in \mathbb{C}^{n\times n \times p}$ be a Hermitian. Then, the following relation holds:
		\begin{equation}
			[\lambda_m(\mathscr{A})] \preceq (\mathscr{X}^H \ast\mathscr{A}\ast \mathscr{X})\ast \left(\mathscr{X}^H \ast \mathscr{X}\right)^{-1} \preceq [\lambda_M(\mathscr{A})] 
		\end{equation}
		for any tensor $\mathscr{X}\in \mathbb{C}^{n\times 1 \times p}$ provided that $\mathscr{X}^H \ast \mathscr{X}$ is non-singular
	\end{prop}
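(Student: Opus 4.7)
The plan is to reduce the claim to $p$ scalar Courant--Fischer inequalities by passing to the frequency domain via the DFT block diagonalization established in \eqref{bloc:diag}. Concretely, first write
\[
\bcrc(\mathscr{A}) = (F_p\otimes I_n)\,{\rm blockdiag}(\tilde{A}_1,\ldots,\tilde{A}_p)\,(F_p^H\otimes I_n),
\]
with each $\tilde{A}_i$ Hermitian, and analogously
\[
\bcrc(\mathscr{X}) = (F_p\otimes I_n)\,{\rm blockdiag}(\tilde{x}_1,\ldots,\tilde{x}_p)\,F_p^H,
\]
so that the hypothesis that $\mathscr{X}^H\ast\mathscr{X}$ is non-singular translates, by the discussion following Definition~\ref{eig:def}, into $\tilde{x}_i\neq 0$ for every $i=1,\ldots,p$.

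Next I would translate the ordering $\preceq$ into the frequency domain. Since $\cA \preceq \cB$ means $\cB-\cA\succcurlyeq 0$, and by Proposition~\ref{prop1.4} (extended block-wise, as already done implicitly in \eqref{order}) this is equivalent to $\tilde{A}_i \preceq \tilde{B}_i$ as Hermitian matrices for each $i$. Thus the two tubular inequalities in the conclusion reduce, block by block, to inequalities between ordinary Hermitian matrices (in fact scalars once the Rayleigh quotient is formed).

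The key computation is then to identify the ``frequency-domain form'' of the tubular Rayleigh quotient $\mathscr{R}:=(\mathscr{X}^H\ast\mathscr{A}\ast\mathscr{X})\ast(\mathscr{X}^H\ast\mathscr{X})^{-1}$. Using that $\bcrc$ is a $*$-algebra homomorphism (preserving product, adjoint and inverse) and the block-diagonal forms above, one computes
\[
\bcrc(\mathscr{X}^H\ast\mathscr{A}\ast\mathscr{X}) = (F_p\otimes 1)\,{\rm diag}(\tilde{x}_i^H\tilde{A}_i\tilde{x}_i)\,F_p^H,
\qquad
\bcrc(\mathscr{X}^H\ast\mathscr{X}) = (F_p\otimes 1)\,{\rm diag}(\tilde{x}_i^H\tilde{x}_i)\,F_p^H,
\]
so that
\[
F_p^H\,\crc(\mathscr{R})\,F_p = {\rm diag}\!\left(\frac{\tilde{x}_i^H\tilde{A}_i\tilde{x}_i}{\tilde{x}_i^H\tilde{x}_i}\right)_{i=1}^{p}.
\]
Combining this with the formulas for $F_p^H\crc[\lambda_m(\mathscr{A})]F_p$ and $F_p^H\crc[\lambda_M(\mathscr{A})]F_p$ recalled just before Proposition~\ref{prop:eig1}, the desired tubular inequalities become the $p$ scalar inequalities
\[
\lambda_{\min}(\tilde{A}_i)\;\le\;\frac{\tilde{x}_i^H\tilde{A}_i\tilde{x}_i}{\tilde{x}_i^H\tilde{x}_i}\;\le\;\lambda_{\max}(\tilde{A}_i),\qquad i=1,\ldots,p,
\]
each of which is precisely the Courant--Fischer Min--Max principle applied to the nonzero vector $\tilde{x}_i$ and the Hermitian matrix $\tilde{A}_i$.

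The main obstacle, and the only step requiring real care, is verifying the diagonal formula for $F_p^H\crc(\mathscr{R})F_p$: one must check that $\bcrc$ respects the T-product inverse (so that $\bcrc((\mathscr{X}^H\ast\mathscr{X})^{-1})$ is block-diagonalized with entries $1/(\tilde{x}_i^H\tilde{x}_i)$), and that the two block-diagonal factors in the product for $\mathscr{R}$ have compatible sizes so that their product is again diagonalized entrywise. Once this is set up, the assembly and the invocation of Courant--Fischer are routine.
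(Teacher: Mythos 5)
Your proposal is correct and matches the paper's own argument: both use the block diagonalization \eqref{bloc:diag} together with the frequency-domain form of $\bcrc(\mathscr{X})$ to write the tubular Rayleigh quotient as $F_p\,\mathrm{diag}\bigl(\tilde{x}_i^H\tilde{A}_i\tilde{x}_i/\tilde{x}_i^H\tilde{x}_i\bigr)F_p^H$ and then apply the Courant--Fischer principle to each Hermitian block $\tilde{A}_i$. The extra care you devote to checking that $\bcrc$ respects the T-product inverse and to translating $\preceq$ block-wise is exactly what the paper leaves implicit, so nothing essential differs.
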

	\begin{proof}
		Consider the decomposition \eqref{bloc:diag} for $\bcrc(\mathscr{A})$ and let
		\[
		\bcrc(\mathscr{X})= (F_p \otimes I_n) {\rm blockdiag}(\tilde{x}_1,\tilde{x}_2,\ldots,\tilde{x}_p)F_p^H. 
		\]
		Evidently, we have
		\[
		\crc\left(\mathscr{X}^H \ast\mathscr{A}\ast \mathscr{X}\right)\crc\left((\mathscr{X}^H \ast \mathscr{X})^{-1}\right)=F_p\text{diag}\left(\frac{\tilde{x}^H_1\tilde{A}_1\tilde{x}_1}{\tilde{x}^H_1\tilde{x}_1},\frac{\tilde{x}^H_2\tilde{A}_2\tilde{x}_2}{\tilde{x}^H_2\tilde{x}_2},\ldots,\frac{\tilde{x}^H_p\tilde{A}_p\tilde{x}_p}{\tilde{x}^H_p\tilde{x}_p}\right)F_p^H.
		\]
		In view of the above relation, the assertion follows immediately by applying Courant--Fischer Min-Max principle for
		Hermitian matrices $\tilde{A}_1,\tilde{A}_2,\ldots,\tilde{A}_p$.
	\end{proof}
	
	We end this part by presenting the following two results on positive (semi)definite tensors which can be deduced from Theorem \ref{th3.4}. 
	
	\begin{prop}
		Let $\mathscr{B}\in \mathbb{C}^{n\times n \times p}$ be a Hermitian positive (semi-)definite tensor.
		If $\mathscr{A}\in \mathbb{C}^{n\times n \times p}$ is Hermitian positive definite, then all tubular eigenvalues of $\mathscr{A}\ast \mathscr{B}$ are Hermitian positive (semi-)definite. 
	\end{prop}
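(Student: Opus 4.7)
The plan is to reduce the claim to a statement about ordinary eigenvalues of the Fourier-transformed blocks and then invoke Proposition \ref{prop1.4} and Theorem \ref{th3.4}. First I would write the block-diagonalizations
\[
\bcrc(\mathscr{A}) = (F_p \otimes I_n)\,{\rm blockdiag}(\tilde{A}_1,\ldots,\tilde{A}_p)\,(F_p^H \otimes I_n),
\]
\[
\bcrc(\mathscr{B}) = (F_p \otimes I_n)\,{\rm blockdiag}(\tilde{B}_1,\ldots,\tilde{B}_p)\,(F_p^H \otimes I_n),
\]
as in \eqref{bloc:diag}, and use the fact that the block-circulant representation is multiplicative, $\bcrc(\mathscr{A}\ast \mathscr{B}) = \bcrc(\mathscr{A})\bcrc(\mathscr{B})$, to obtain the block-diagonalization of $\bcrc(\mathscr{A}\ast\mathscr{B})$ with blocks $\tilde{A}_i\tilde{B}_i$.

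Next I would argue that each $\tilde A_i$ is Hermitian positive definite and each $\tilde B_i$ is Hermitian positive (semi-)definite: since $\mathscr{A}$ is Hermitian, $\bcrc(\mathscr{A})$ is Hermitian, and so each $\tilde A_i$ inherits Hermiticity from the unitary block-diagonalization; positive definiteness of $\mathscr{A}$ in the sense of Definition \ref{def:spd} translates (via the first-frontal-slice identity \eqref{eq:bil} and the fact that $\crc$ and $\bcrc$ are unitarily similar to the block-diagonal form) to $\bcrc(\mathscr{A})\succ 0$, hence each $\tilde A_i \succ 0$. The same argument gives $\tilde B_i \succcurlyeq 0$ (or $\succ 0$). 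Consequently each product $\tilde A_i \tilde B_i$ is similar to the Hermitian positive (semi-)definite matrix $\tilde A_i^{1/2}\tilde B_i\tilde A_i^{1/2}$, so its eigenvalues are real and non-negative (strictly positive when $\mathscr{B}$ is positive definite).

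Now let $[\lambda]$ be any tubular eigenvalue of $\mathscr{A}\ast\mathscr{B}$ and write
\[
F_p^H\,\crc([\lambda])\,F_p = {\rm diag}(\tilde{\lambda}_1,\tilde{\lambda}_2,\ldots,\tilde{\lambda}_p).
\]
By Theorem \ref{th3.4}, each $\tilde{\lambda}_i$ is a T-eigenvalue of $\mathscr{A}\ast\mathscr{B}$, hence an eigenvalue of one of the blocks $\tilde{A}_j\tilde{B}_j$; by the previous step, every $\tilde\lambda_i$ is therefore real and non-negative (resp.\ strictly positive). Since the similarity matrix $F_p$ is unitary and the diagonal is real, $\crc([\lambda])$ is Hermitian, so $[\lambda]$ is a Hermitian tubular tensor; then Proposition \ref{prop1.4} immediately yields that $[\lambda]$ is Hermitian positive (semi-)definite, completing the proof.

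The only delicate step I foresee is the second one: carefully justifying that positive (semi-)definiteness of the tensor $\mathscr{A}$ (resp.\ $\mathscr{B}$) transfers to the Fourier blocks $\tilde A_i$ (resp.\ $\tilde B_i$). This requires unwinding Definition \ref{def:spd} through the relation \eqref{eq:bil} and using that the change of variables $\bcrc(\mathscr{X}) \mapsto (F_p^H\otimes I_n)\bcrc(\mathscr{X})(F_p\otimes I_n)$ is a unitary conjugation that decouples the quadratic form into a direct sum over the $p$ Fourier blocks. Once this bridge is established, the rest of the argument is a direct application of the lemma that $AB$ has real non-negative spectrum when $A\succ 0$ and $B\succcurlyeq 0$, combined with Theorem \ref{th3.4} and Proposition \ref{prop1.4}.
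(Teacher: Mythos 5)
Your proof is correct and follows essentially the route the paper intends: the paper omits the proof, saying only that the result can be deduced from Theorem \ref{th3.4}, and your argument is exactly that deduction---pass to the Fourier blocks of $\bcrc(\mathscr{A})$ and $\bcrc(\mathscr{B})$, note each block product $\tilde A_i\tilde B_i$ is similar to the Hermitian positive (semi-)definite matrix $\tilde A_i^{1/2}\tilde B_i\tilde A_i^{1/2}$, and read off the sign of the $\tilde\lambda_i$ via Theorem \ref{th3.4}. The only cosmetic point is that Proposition \ref{prop1.4} is stated for the positive definite case, so for the semi-definite conclusion you should invoke its obvious ``nonnegative diagonal'' analogue, which follows from the same argument.
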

	
	\begin{thm}
		Let $\mathscr{A}\in \mathbb{C}^{n\times n \times p}$ be a Hermitian tensor. The tensor $\mathscr{A}$ is positive definite if and only if all tubular eigenvalues of  $\mathscr{A}$  are positive definite.
	\end{thm}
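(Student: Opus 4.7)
The plan is to translate the problem to the block-diagonal representation \eqref{bloc:diag} and then invoke Proposition \ref{prop1.4} together with Theorem \ref{th3.4}. First, I would record the known equivalence that, since $\cA$ is Hermitian, the conditions ``$\cA \succ 0$'', ``$\bcrc(\cA)$ is Hermitian positive definite'', and ``each $\tilde A_i$ in the decomposition $\bcrc(\cA) = (F_p\otimes I_n)\,\text{blockdiag}(\tilde A_1,\ldots,\tilde A_p)(F_p^H\otimes I_n)$ is Hermitian positive definite'' are all equivalent. The first equivalence follows from Definition~\ref{def:spd} together with the identity $(\cX^H\ast\cA\ast\cX)_{::1} = \ufld(\cX)^H\bcrc(\cA)\ufld(\cX)$, a direct generalization of \eqref{eq:bil}, plus the fact that $\ufld$ is a bijection onto $\C^{np}$; the second equivalence is immediate since the transformation diagonalizing $\bcrc(\cA)$ is unitary.

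For the forward direction, assume $\cA \succ 0$, so that each $\tilde A_i$ is Hermitian positive definite. Let $[\lambda]$ be an arbitrary tubular eigenvalue of $\cA$. By Theorem~\ref{th3.4}, the diagonal entries $\tilde\lambda_1,\ldots,\tilde\lambda_p$ of $D:=F_p^H\crc([\lambda])F_p$ are eigenvalues of $\tilde A_1,\ldots,\tilde A_p$ respectively; since each $\tilde A_i \succ 0$, each $\tilde\lambda_i$ is real and strictly positive. Because $\cA$ is Hermitian, $[\lambda]$ is Hermitian (as recorded in the discussion preceding \eqref{bloc:diag}), so Proposition~\ref{prop1.4} applies and yields that $[\lambda]$ is positive definite.

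For the reverse direction, assume all tubular eigenvalues of $\cA$ are positive definite. By Remark~\ref{rem:enum}, any choice of an eigenvalue $\tilde\lambda_i$ from each $\tilde A_i$ can be assembled into a tubular eigenvalue $[\lambda]$ of $\cA$ with $F_p^H\crc([\lambda])F_p = \text{diag}(\tilde\lambda_1,\ldots,\tilde\lambda_p)$. Applying Proposition~\ref{prop1.4} to this $[\lambda]$, each chosen $\tilde\lambda_i$ is positive. Since the choice of eigenvalue in each slot was arbitrary, every eigenvalue of every $\tilde A_i$ is positive, so each $\tilde A_i \succ 0$, which gives $\cA \succ 0$ by the preliminary equivalence.

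The argument is essentially a bookkeeping exercise once the correspondence of Theorem~\ref{th3.4} and Remark~\ref{rem:enum} is invoked; the only subtle point is the reverse direction, where one must use that \emph{every} tubular eigenvalue (not just the extremal one $[\lambda_m(\cA)]$) is positive definite, or equivalently appeal to $[\lambda_m(\cA)]$ specifically, to rule out any non-positive eigenvalue of any $\tilde A_i$. Either formulation suffices, and this is the step that most depends on the full conclusion of Remark~\ref{rem:enum}.
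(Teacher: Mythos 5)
Your proof is correct and is essentially the paper's own route: the paper gives no written proof, stating only that the theorem ``can be deduced from Theorem \ref{th3.4}'', and your argument---reducing positive definiteness of $\cA$ to positive definiteness of the Fourier blocks $\tilde A_i$ in \eqref{bloc:diag}, then passing between tubular eigenvalues and the eigenvalues of the $\tilde A_i$ via Theorem \ref{th3.4}, Remark \ref{rem:enum} and Proposition \ref{prop1.4}---is exactly that intended deduction. (One small point worth making explicit: the assembled $[\lambda]$ in your reverse direction is Hermitian, since its Fourier coefficients are real eigenvalues of the Hermitian $\tilde A_i$, so Proposition \ref{prop1.4} indeed applies; alternatively, appealing only to $[\lambda_m(\mathscr{A})]$ suffices, as you note.)
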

	
	\section{Tubular spectral analysis of tensor iterative methods}\label{sec4}
	
	Consider the following tensor equation \eq{eqten} \mathscr{A} \ast
	\mathscr{X}=\mathscr{B} \en where
	$\mathscr{X}\in \mathbb{R}^{n\times 1 \times p}$ is unknown, the tensors
	$\mathscr{A}\in \mathbb{R}^{n\times n \times p}$ and
	$\mathscr{B}\in \mathbb{R}^{n\times 1 \times p}$ are given. Iterative methods
	can be applied for solving \eqref{eqten} in \textit{tubular} and \textit{global}
	forms. Both of these versions for Krylov subspace methods have been exploited in
	the literature, see \cite{El2021tensor,ElIchi,Kilmer2013,Kilmer2011,Reichel}.
	Considering the decomposition \eqref{bloc:diag} for $\mathscr{A}$, the tubular
	version of an iterative method is mathematically equivalent to implementing it
	on $p$ subproblems (with coefficient matrices of size $n$) in the Fourier
	domain, e.g., see \cite[Subsection 6.2]{Kilmer2013} for more details. The global
	version of an iterative method refers to the case when the method is basically
	used for solving the linear system of equations
	$\bcrc(\mathscr{A}) \ufld(\mathscr{X})=\ufld(\mathscr{B})$ and in a practical
	implementation it is used in tensor structure, see, for instance, 
	\cite{El2021tensor}.
	
	It is known that when the matrix $A$ is symmetric then the distribution of
	eigenvalues of $A$ are descriptive for convergence analysis of some Krylov
	subspace methods to solve $Ax=b$, see \cite{Saad2003}. As a result, in the case
	when the tensor $\mathscr{A}$ is symmetric, the T-eigenvalues play a key role in
	the convergence analysis of global Krylov subspace methods to solve
	\eqref{eqten}.  To the best of our knowledge, the convergence properties of
	tubular Krylov subspace methods have not been discussed in detail. Besides, the
	tubular and global iterative methods have not been theoretically compared. We
	show that in exact arithmetic, a tubular version of an iterative method provides
	a more accurate approximation compared to its global form while both forms seek
	their new approximation in the same subspace. In addition, we present some
	results on tubular eigenvalues which can be used for convergence analysis of
	tubular iterative methods.  For better clarity, we present the discussions
	for stationary and a class of non-stationary methods in two separate parts. To
	be specific, in each part, we consider tubular and global forms of a simple
	iterative method and study the convergence properties of the tubular form.
	
	\subsection{Stationary iterative methods}
	Assume that the coefficient tensor $\mathscr{A}$ in \eqref{eqten} is a
	non-singular. Let the non-singular tensor $\mathscr{M}$ and the tensor
	$\mathscr{N}$ be given such that $\mathscr{A}=\mathscr{M}-\mathscr{N}$. A
	generic stationary iterative method produces the sequence of approximations
	$\{\mathscr{X}_k\}_{k=1}^{\infty}$as follows:
	\begin{equation}\label{method:sta}
		\mathscr{X}_{k+1}=\mathscr{G}\ast \mathscr{X}_{k}+\mathscr{M}^{-1}\ast \mathscr{B}, \quad k=0,1,2,\ldots
	\end{equation}
	where the initial guess $\mathscr{X}_{0}$ is given and
	$\mathscr{G}=\mathscr{M}^{-1} \ast \mathscr{N}$ is called the iteration tensor.
	
	In order to analyze the convergence of stationary iterative methods with
	respect to the tubular spectrum, we need to define the notion of tubular spectral
	radius. Note that the notion of spectral radius can be also extended to the tensor
	framework considering the definition of T-eigenvalue. Thus, the T-spectral
	radius is a positive scalar $\bar{\rho}_T(\mathscr{A})$ such that
	$|\lambda| \le \bar{\rho}_T(\mathscr{A})$ for any arbitrary T-eigenvalue
	$\lambda$ of $\mathscr{A}$. In fact, $\bar{\rho}_T(\mathscr{A})$ is the spectral
	radius of $\bcrc(\mathscr{A})$.
	
	\begin{defn}
		The \emph{tubular spectral radius} of
		$\mathscr{A}$ is defined as follows:
		$$[\rho_T(\mathscr{A})] := \{([\mu]^H \ast [\mu])^{\frac{1}{2}} ~|~ [\mu]^H \ast [\mu] \succeq  [\lambda]^H \ast [\lambda], \quad \forall [\lambda] \in \sigma_T(\mathscr{A}) \}.$$
	\end{defn}

	Given a (semi-)symmetric/Hermitian positive definite tubular tensor $[v]$, we
	observe that $v_{111} > (\ge)~ 0$. Consequently, we can deduce that
	$[a] \succ (\succeq) ~[b]$ implies that $a_{111} > (\ge) ~b_{111}$. This simple
	fact is helpful in proving the following lemma.
	\begin{lem}\label{lem2.4}
		Let $\{[ a_\ell ]\}_{\ell=0}^{\infty}$ be a sequence of complex tubular
		tensors of length $p$.  If there exists a Hermitian positive define tubular
		tensor $[w]$ such that $[w] \prec [e_1]$ and
		\begin{equation} \label{dec}
			[a_\ell]^H \ast [a_\ell] \preceq [w]\ast	[a_{\ell-1}]^H \ast [a_{\ell-1}]
		\end{equation}
		then $[a_\ell] \to 0$ as $\ell \to \infty$.
	\end{lem}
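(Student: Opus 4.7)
The plan is to transfer the entire problem to the Fourier domain, where tubular tensors of length $p$ behave like vectors of length $p$ under a commutative algebra isomorphism. Specifically, to any tubular tensor $[a]$ we associate the diagonal entries $\tilde a_1,\ldots,\tilde a_p$ of the diagonal matrix $F_p^H\,\crc([a])\,F_p$. Three facts from Subsection~\ref{sec:basics} make this translation useful: (i) by Proposition~\ref{prop1.4}, $[a]$ Hermitian positive (semi)definite is equivalent to all $\tilde a_i$ being real and $>0$ (resp.\ $\ge 0$); (ii) the tubular $\ast$-product corresponds to componentwise multiplication of the $\tilde a_i$'s, because all circulant matrices are simultaneously diagonalized by $F_p$; (iii) in particular, $[a]^H\ast[a]$ corresponds to the componentwise squared modulus $|\tilde a_i|^2$, and $[e_1]$ corresponds to the all-ones vector since $\crc([e_1])=I_p$ gives $F_p^H I_p F_p=I_p$.

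First, I would decode the hypothesis $[w]\prec[e_1]$: applied to Fourier coordinates, it says that the (real) diagonal entries $\tilde w_i$ of $F_p^H\crc([w])F_p$ satisfy $0<\tilde w_i<1$ for $i=1,\ldots,p$. Next I would iterate the recurrence
\begin{equation*}
[a_\ell]^H\ast[a_\ell]\;\preceq\;[w]\ast[a_{\ell-1}]^H\ast[a_{\ell-1}]
\end{equation*}
to obtain $[a_\ell]^H\ast[a_\ell]\preceq [w]^{\ast\ell}\ast[a_0]^H\ast[a_0]$. This unrolling requires one auxiliary observation that I would verify first, namely that for Hermitian tubular tensors $[u]\preceq[v]$ and any Hermitian positive definite tubular tensor $[w]$, one has $[w]\ast[u]\preceq[w]\ast[v]$; in Fourier coordinates this is just the elementary fact that multiplying both sides of a real scalar inequality by a positive scalar preserves the inequality.

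Translating the unrolled inequality componentwise into the Fourier domain gives
\begin{equation*}
|\tilde a_\ell^{(i)}|^2\;\le\;\tilde w_i^{\,\ell}\,|\tilde a_0^{(i)}|^2,\qquad i=1,2,\ldots,p.
\end{equation*}
Since $\tilde w_i\in(0,1)$, the right-hand side tends to $0$ as $\ell\to\infty$, so $\tilde a_\ell^{(i)}\to 0$ for each $i$. Because the map $[a]\mapsto(\tilde a_1,\ldots,\tilde a_p)$ is a linear isomorphism (in fact an isometry up to the fixed unitary change of basis $F_p$), convergence in Fourier coordinates is equivalent to convergence of $[a_\ell]$ itself, and we conclude $[a_\ell]\to 0$.

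The only step that requires mild care is the monotonicity of the ordering under $\ast$-multiplication by a Hermitian positive definite tubular tensor, which is needed to justify iterating the recurrence. Everything else is essentially reading off the hypothesis in the Fourier diagonalization and invoking geometric decay of $\tilde w_i^{\ell}$ componentwise; no further analytic difficulty arises since $p$ is finite and the convergence is coordinatewise.
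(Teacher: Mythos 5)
Your proposal is correct: every translation you use (Hermitian ordering of tubular tensors is componentwise ordering of the Fourier coefficients, $\ast$ becomes componentwise multiplication, $[a]^H\ast[a]$ becomes $|\tilde a_i|^2$, $[e_1]$ becomes the all-ones vector, $[w]\prec[e_1]$ becomes $0<\tilde w_i<1$) is valid, the monotonicity observation you flag is exactly the right auxiliary fact and holds for the reason you give, and inverting the fixed DFT turns coordinatewise decay into convergence of $[a_\ell]$. The paper argues differently in one respect: instead of unrolling the recursion in the Fourier domain, it first replaces the tubular factor $[w]$ by a single scalar $\omega<1$ (asserting, essentially via $\omega=\max_i\tilde w_i$ and the same multiplication-monotonicity you prove, that $[a_\ell]^H\ast[a_\ell]\preceq\omega\,[a_{\ell-1}]^H\ast[a_{\ell-1}]$), and then tracks only the $(1,1,1)$ entry $s_\ell$ of $[a_\ell]^H\ast[a_\ell]$, which equals $\sum_{j}|a_{\ell,11j}|^2$; since the ordering of Hermitian positive semidefinite tubular tensors forces the ordering of their $(1,1,1)$ entries, $s_\ell\le\omega\,s_{\ell-1}\to0$ and all frontal slices of $[a_\ell]$ vanish, with no need to return from the Fourier domain. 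What each approach buys: your version is fully explicit and proves the monotonicity step the paper only uses implicitly when producing $\omega$, and it yields slightly finer information (each Fourier coordinate decays at its own rate $\tilde w_i$); the paper's version is shorter once $\omega$ is granted, because a single scalar sequence, which is precisely the squared norm of the slices of $[a_\ell]$, controls everything at once.
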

	
	\begin{proof}
		The assumption $[w] \prec [e_1]$ together with \eqref{dec}  ensure  the existence of non-negative constant $\omega <1$ such that
		\begin{equation} \label{dec2}
			[a_\ell]^H \ast [a_\ell] \preceq \omega ~	[a_{\ell-1}]^H \ast [a_{\ell-1}].
		\end{equation}
		With each tubular tensor $[a_\ell]$, we associate the scalar $s_\ell$ defined by 
		\begin{equation}\label{s1}
			{s_\ell } = \sum\limits_{j = 1}^p {|a_{\ell ,11j}|^2} 
		\end{equation}
		where $a_{\ell ,11j}$ denotes the $j$th frontal slice of
		$[a_\ell]$. Basically, $s_\ell$ is the entry in position $(1,1,1)$ of
		$[a_\ell]^H \ast [a_\ell]$. Therefore, the relation \eqref{dec2} implies that
		$\{ s_\ell \}_{\ell=1}^{\infty}$ is a monotonically decreasing sequence of
		non-negative scalars which shows that
		${\mathop {\lim }\limits_{\ell \to \infty }}{s_\ell }=\tau$ such that
		$\tau\ge 0$. In particular, the relation \eqref{dec2} guarantees the validity
		of the following relation
		\[
		0 \le s_\ell \le \omega  s_{\ell-1}.
		\]
		As a result, we deduce that $\tau=0$, i.e.,
		${\mathop {\lim }\limits_{\ell \to \infty }}{s_\ell }=0$. Now, by \eqref{s1},
		we can conclude that each of the frontal slices of the tubular tensor $[a_\ell]$
		goes to zero as $\ell \to \infty$ which completes the proof.
	\end{proof}
	
	Let
	${\mathscr{A}^k} := \underbrace {\mathscr{A}\ast \mathscr{A} \ast \ldots\ast
		\mathscr{A}}_{{\rm k-times}}$. In the following, we show that if
	$[\rho_T(\mathscr{A})]\prec [e_1]$ then ${\mathscr{A}^k}$ converges to zero as
	$k \to \infty$.
	
	\begin{prop}\label{prop3.6}
		Let $\mathscr{A}$ be a tensor of size $n \times n \times p$. If $[\rho_T(\mathscr{A})]\prec [e_1]$ then	${\mathop {\lim }\limits_{k \to \infty }}{\mathscr{A}^k } = \mathscr{O}.$
	\end{prop}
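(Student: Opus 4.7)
The plan is to pass to the Fourier domain via the block-diagonalization of $\bcrc(\mathscr{A})$, and then reduce the statement to the classical matrix fact that $\rho(B) < 1$ implies $B^k \to 0$.

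First I would write
\[
\bcrc(\mathscr{A})=(F_p\otimes I_n)\,\text{blockdiag}(\tilde{A}_1,\ldots,\tilde{A}_p)\,(F_p^H\otimes I_n),
\]
so that $\bcrc(\mathscr{A}^k)=\bcrc(\mathscr{A})^k$ has the same form with each $\tilde{A}_i$ replaced by $\tilde{A}_i^k$. Hence it is enough to show that $\rho(\tilde{A}_i)<1$ for every $i=1,\ldots,p$, from which $\tilde{A}_i^k\to 0$ and therefore $\mathscr{A}^k\to\mathscr{O}$ follow from the matrix case and the inverse relation $\mathscr{A}^k=\fld(\bcrc(\mathscr{A}^k)\ufld(\mathscr{I}_{nnp}))$ (or any similar retrieval of the frontal slices from $\bcrc$).

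The main step, and the only real obstacle, is translating the hypothesis $[\rho_T(\mathscr{A})]\prec[e_1]$ into the scalar inequalities $\rho(\tilde{A}_i)<1$. For this I would use Theorem~\ref{th3.4} and Remark~\ref{rem:enum}: given any eigenvalue $\tilde\lambda$ of a fixed $\tilde{A}_i$, one can complete the list by choosing an arbitrary eigenvalue of each $\tilde{A}_j$ for $j\ne i$ and assemble a tubular eigenvalue $[\lambda]$ of $\mathscr{A}$ whose diagonal in the Fourier basis has $\tilde\lambda$ in position $i$. Writing $[\mu]:=[\rho_T(\mathscr{A})]$ and $F_p^H\crc([\mu])F_p=\text{diag}(\tilde\mu_1,\ldots,\tilde\mu_p)$ with $\tilde\mu_j>0$, the defining inequality $[\mu]^H\ast[\mu]\succeq[\lambda]^H\ast[\lambda]$ diagonalizes to $|\tilde\mu_j|^2\ge|\tilde\lambda_j|^2$ componentwise, so in particular $|\tilde\lambda|\le\tilde\mu_i$. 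Meanwhile $[\mu]\prec[e_1]$ diagonalizes (since $\crc([e_1])=I_p$) to $\tilde\mu_i<1$. Combining these gives $|\tilde\lambda|<1$ for every eigenvalue of every $\tilde{A}_i$, i.e. $\rho(\tilde{A}_i)<1$.

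Once this reduction is in place, the conclusion is immediate: each $\tilde{A}_i^k\to 0$, hence the block-diagonal middle factor tends to the zero matrix, $\bcrc(\mathscr{A})^k\to 0$, and reading off the first block-column recovers all frontal slices of $\mathscr{A}^k$, yielding $\mathscr{A}^k\to\mathscr{O}$. An alternative would be to apply Lemma~\ref{lem2.4} to the sequence $[a_k]$ obtained as the $(1,1,\cdot)$-tube of $(\mathscr{A}^k\ast\mathscr{X})^H\ast(\mathscr{A}^k\ast\mathscr{X})$ for a generic $\mathscr{X}$, with $[w]=[\mu]^H\ast[\mu]$; however the block-diagonalization route above is more direct and avoids having to establish the submultiplicative bound $[a_k]^H\ast[a_k]\preceq[w]\ast[a_{k-1}]^H\ast[a_{k-1}]$ from scratch.
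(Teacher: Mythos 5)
Your argument is correct, but it takes a genuinely different route from the paper's. The paper stays inside the tubular formalism: for each tubular eigenvalue $[\lambda]$ it forms the powers $[\lambda^k]$, bounds $[\lambda^k]^H\ast[\lambda^k]$ by $([\rho_T(\mathscr{A})]\ast[\rho_T(\mathscr{A})])\ast[\lambda^{k-1}]^H\ast[\lambda^{k-1}]$, invokes Lemma~\ref{lem2.4} to get $[\lambda^k]\to 0$, and then passes through the TJCF (Theorem~\ref{thm:TJCF}), asserting that $\mathscr{J}^k\to 0$ and hence $\mathscr{A}^k=\mathscr{P}^{-1}\ast\mathscr{J}^k\ast\mathscr{P}\to\mathscr{O}$. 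You instead work entirely in the Fourier domain: using $\bcrc(\mathscr{A}^k)=\bcrc(\mathscr{A})^k$, the entrywise reading of $\preceq$ on Hermitian tubular tensors (Proposition~\ref{prop1.4}), and the enumeration of tubular eigenvalues from Theorem~\ref{th3.4} and Remark~\ref{rem:enum}, you translate $[\rho_T(\mathscr{A})]\prec[e_1]$ into $\rho(\tilde A_i)<1$ for every block and then quote the classical matrix fact. What your route buys is transparency at exactly the point the paper glosses over: the claim ``$\mathscr{J}^k$ converges to zero'' is, once unpacked, the same classical statement $\rho(C_i)<1\Rightarrow C_i^k\to 0$ applied to the triangular blocks, and your proof makes that reduction explicit while bypassing Lemma~\ref{lem2.4} and the auxiliary sequence $[\lambda^k]$ altogether. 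What the paper's route buys is that it exercises the tubular machinery (Lemma~\ref{lem2.4} is reused in spirit for the convergence analysis in Section~\ref{sec4}) and keeps the statement and proof phrased in terms of tubular eigenvalues rather than the blocks of $\bcrc(\mathscr{A})$. Note that both arguments ultimately lean on the converse enumeration in Remark~\ref{rem:enum} (every eigenvalue of every block sits inside some tubular eigenvalue), which the paper states without proof, so your reliance on it is no weaker than the paper's; you also use $\bcrc(\mathscr{X}\ast\mathscr{Y})=\bcrc(\mathscr{X})\bcrc(\mathscr{Y})$, a standard property of the T-product that is easy to justify if a referee asks.
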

	
	\begin{proof}
		Let	$[\lambda]$ be an arbitrary tubular  eigenvalue of $\mathscr{A}$. We define  $[\lambda^k] := \underbrace {[\lambda]\ast [\lambda]\ast \ldots\ast [\lambda]}_{{\rm k-times}}$ with ${[\lambda^0]}=[e_1]$. Evidently, for $k\ge 1$, we have 
		\[
		[\lambda^k]^H\ast [\lambda^k]=([\lambda]^H\ast [\lambda])\ast ([\lambda^{k-1}]^H\ast [\lambda^{k-1}])
		\]	
		which implies that
		\begin{equation*}
			[\lambda^k]^H \ast [\lambda^k] \preceq ([\rho_T(\mathscr{A})]\ast [\rho_T(\mathscr{A})]) \ast [\lambda^{k-1}]^H \ast [\lambda^{k-1}]\quad k=1,2,\ldots,
		\end{equation*}
		
		Now, under the assumption $[\rho_T(\mathscr{A})]\prec [e_1]$, Lemma
		\ref{lem2.4} shows that $[\lambda^k]\to 0$ as $k\to \infty$. It can be
		verified that $[\lambda^k]$ is a tubular eigenvalue of
		${\mathscr{A}^k}=\mathscr{P}^{-1}\ast \mathscr{J}^{k} \ast \mathscr{P}$
		where ${\mathscr{A}}=\mathscr{P}^{-1}\ast \mathscr{J} \ast \mathscr{P}$
		is the TJCF of $\mathscr{A}$. It is not difficult to verify that
		$\mathscr{J}^k$ converges to zero as $k\to \infty$ which completes the
		proof.
	\end{proof}
	
	We can now present the following proposition which can be proved by using
	straightforward computations.
	
	\begin{prop}\label{prop3.7}
		Let $\mathscr{A}$ be a tensor of size $n \times n \times p$.  If $[\rho_T(\mathscr{A})]\prec [e_1]$, then  $\mathscr{I}-\mathscr{A}$ is non-singular and 
		\begin{equation*}
			(\mathscr{I}_{nnp}-\mathscr{A})^{-1}=\sum\limits_{k = 0}^\infty  {{\mathscr{A}^k}}.
		\end{equation*}
	\end{prop}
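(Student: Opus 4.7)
The plan is to prove Proposition \ref{prop3.7} by combining a telescoping identity with Proposition \ref{prop3.6}, while using the $\bcrc$-representation to transfer classical matrix facts (Neumann series / invertibility) into the tensor setting via the isomorphism $\bcrc(\mathscr{A}\ast\mathscr{B})=\bcrc(\mathscr{A})\bcrc(\mathscr{B})$ and $\bcrc(\mathscr{A}^k)=\bcrc(\mathscr{A})^k$.

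First I would establish the nonsingularity of $\mathscr{I}_{nnp}-\mathscr{A}$. By Theorem~\ref{th3.4}, to every tubular eigenvalue $[\lambda]$ correspond T-eigenvalues $\tilde{\lambda}_1,\ldots,\tilde{\lambda}_p$ obtained from $F_p^H\crc([\lambda])F_p=\mathrm{diag}(\tilde{\lambda}_1,\ldots,\tilde{\lambda}_p)$. The Fourier diagonalisation of $[\rho_T(\mathscr{A})]=([\mu]^H\ast[\mu])^{1/2}$ has entries $|\tilde{\mu}_i|\ge|\tilde{\lambda}_i|$ since $[\mu]^H\ast[\mu]\succeq[\lambda]^H\ast[\lambda]$, and the hypothesis $[\rho_T(\mathscr{A})]\prec[e_1]$ translates (via Proposition~\ref{prop1.4} applied to $[e_1]-[\rho_T(\mathscr{A})]$) to $|\tilde{\mu}_i|<1$. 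Hence $|\tilde{\lambda}_i|<1$ for every T-eigenvalue of $\mathscr{A}$; equivalently, the spectral radius of $\bcrc(\mathscr{A})$ is strictly less than $1$, so $I_{np}-\bcrc(\mathscr{A})=\bcrc(\mathscr{I}_{nnp}-\mathscr{A})$ is non-singular, which is precisely the definition of non-singularity of $\mathscr{I}_{nnp}-\mathscr{A}$.

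Next I would derive the series identity. Denote $\mathscr{S}_N:=\sum_{k=0}^{N}\mathscr{A}^k$. A direct telescoping computation using associativity and distributivity of the T-product gives
\begin{equation*}
(\mathscr{I}_{nnp}-\mathscr{A})\ast \mathscr{S}_N \;=\; \mathscr{S}_N \ast(\mathscr{I}_{nnp}-\mathscr{A})\;=\;\mathscr{I}_{nnp}-\mathscr{A}^{N+1}.
\end{equation*}
By Proposition~\ref{prop3.6}, the hypothesis $[\rho_T(\mathscr{A})]\prec[e_1]$ yields $\mathscr{A}^{N+1}\to\mathscr{O}$ as $N\to\infty$, so the right-hand side tends to $\mathscr{I}_{nnp}$ entrywise. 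Convergence of $\mathscr{S}_N$ itself is easiest to see through $\bcrc$: since $\bcrc(\mathscr{S}_N)=\sum_{k=0}^{N}\bcrc(\mathscr{A})^k$ and the spectral radius of $\bcrc(\mathscr{A})$ is $<1$, the classical Neumann series converges to $(I_{np}-\bcrc(\mathscr{A}))^{-1}$. Applying $\mathrm{fold}$ to the first block column transfers the limit back to the tensor world and gives a well-defined limit tensor $\mathscr{S}_\infty:=\sum_{k=0}^{\infty}\mathscr{A}^k$.

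Passing to the limit in the telescoping identity then yields $(\mathscr{I}_{nnp}-\mathscr{A})\ast\mathscr{S}_\infty=\mathscr{S}_\infty\ast(\mathscr{I}_{nnp}-\mathscr{A})=\mathscr{I}_{nnp}$, which, combined with the non-singularity already established, identifies $\mathscr{S}_\infty$ as $(\mathscr{I}_{nnp}-\mathscr{A})^{-1}$. The main obstacle is really the first step: unpacking the inequality $[\rho_T(\mathscr{A})]\prec[e_1]$ to a scalar bound $|\tilde{\lambda}_i|<1$ on every T-eigenvalue. Once that translation is in place through Proposition~\ref{prop1.4} and Theorem~\ref{th3.4}, the remainder is a clean telescoping argument reducible to the familiar matrix Neumann series via the $\bcrc$ correspondence.
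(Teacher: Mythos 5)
Your proposal is correct. The paper itself gives no proof of Proposition \ref{prop3.7} --- it is dismissed as provable ``by straightforward computations'' right after Proposition \ref{prop3.6} --- so there is nothing to match line by line; your telescoping identity $(\mathscr{I}_{nnp}-\mathscr{A})\ast\mathscr{S}_N=\mathscr{I}_{nnp}-\mathscr{A}^{N+1}$ combined with Proposition \ref{prop3.6} and the classical Neumann series for $\bcrc(\mathscr{A})$ is precisely the intended argument, and the transfer back and forth through $\bcrc$ is legitimate since $\bcrc$ is linear and multiplicative with respect to the T-product. Two small remarks. First, your claim that \emph{every} T-eigenvalue of $\mathscr{A}$ has modulus less than one uses not only Theorem \ref{th3.4} (tubular eigenvalue $\Rightarrow$ T-eigenvalues) but also the converse enumeration of Remark \ref{rem:enum}, namely that every eigenvalue of every Fourier block $C_i$ occurs as the $i$-th Fourier component of some tubular eigenvalue; without that remark the hypothesis $[\rho_T(\mathscr{A})]\prec[e_1]$ would only control the T-eigenvalues that you happen to reach from tubular ones, so cite it explicitly. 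Second, that entire first step can be shortened: Proposition \ref{prop3.6} already yields $\mathscr{A}^k\to\mathscr{O}$, hence $\bcrc(\mathscr{A})^k=\bcrc(\mathscr{A}^k)\to 0$, which forces the spectral radius of $\bcrc(\mathscr{A})$ to be strictly below one, and nonsingularity of $\bcrc(\mathscr{I}_{nnp}-\mathscr{A})=I_{np}-\bcrc(\mathscr{A})$ (equivalently of $\mathscr{I}_{nnp}-\mathscr{A}$) follows at once, with no detour through the tubular/T-eigenvalue dictionary. Either way the conclusion stands, and the limit passage in the telescoping identity is as you describe.
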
  
	
	Using Propositions \ref{prop3.6} and \ref{prop3.7}, we  observe that the
	sequence of approximate solutions $\{\mathscr{X}_{k}\}_{k=1}^{\infty}$ produced
	by iteration \eqref{method:sta}  converges for any initial guess
	$\mathscr{X}_{0}$ provided that $[{\rho}_T(\mathscr{G})]\prec[e_1]$.
	
	Now, we consider the Richardson method for solving the following normal equation
	\eq{normal}
	\mathscr{A}^T \ast \mathscr{A} \ast \mathscr{X}=\mathscr{A}^T \ast \mathscr{B}.
	\en
	
	The tubular and global versions of the Richardson method are given as follows:
	\begin{equation}\label{richard1}
		\mathscr{X}_{k+1}=\mathscr{X}_{k}+\mathscr{A}^T \ast (\mathscr{B}-\mathscr{A} \ast \mathscr{X}_{k}) \ast [\alpha], \quad k=0,1,2,\ldots
	\end{equation}
	and 
	\begin{equation}\label{richard2}
		\mathscr{X}_{k+1}=\mathscr{X}_{k}+\mu ~\mathscr{A}^T \ast (\mathscr{B}-\mathscr{A} \ast \mathscr{X}_{k}), \quad k=0,1,2,\ldots
	\end{equation}
	where $[\alpha]$ and $\mu$  are respectively prescribed symmetric tubular tensor and positive scalar. 
	
	Given a tubular tensor $[a]$ of length $p$, we define tensor $\mathscr{D}_{[a]}$
	which refers to the tensor whose nonzero entries are given by
	$(\mathscr{D}_{[a]})_{ii:}=[a]$ for $i=1,2,\ldots,p$. Given an arbitrary tensor
	$\mathscr{X}^{n\times 1 \times p}$, it can be verified that
	\[\mathscr{D}_{[s]} \ast \mathscr{X} = \left( {\begin{array}{*{20}{c}}
			{[s]\ast {\mathscr{X}_{1::}}}\\
			{[s]\ast{\mathscr{X}_{2::}}}\\
			\vdots \\
			{[s]\ast{\mathscr{X}_{n::}}}
	\end{array}} \right) = \left( {\begin{array}{*{20}{c}}
			{{\mathscr{X}_{1::}}\ast [s]}\\
			{{\mathscr{X}_{2::}}\ast [s]}\\
			\vdots \\
			{{\mathscr{X}_{n::}}\ast [s]}
	\end{array}} \right) = \left( {\begin{array}{*{20}{c}}
			{{\mathscr{X}_{1::}}}\\
			{{\mathscr{X}_{2::}}}\\
			\vdots \\
			{{\mathscr{X}_{n::}}}
	\end{array}} \right)\ast [s]=\mathscr{X}\ast [s].\]
	This shows that the iteration \eqref{richard1} is equivalent to the following form:
	\begin{eqnarray*}
		\mathscr{X}_{k+1}&=& \mathscr{G}_{[\alpha]} \ast \mathscr{X}_{k}+\mathscr{D}_{[\alpha]} \ast \mathscr{A}^T \ast\mathscr{B}, \quad k=0,1,2,\ldots
	\end{eqnarray*}
	where $\mathscr{G}_{[\alpha]}=\mathscr{I}-\mathscr{D}_{[\alpha]} \ast \mathscr{A}^T\ast \mathscr{A}$.
	The above form can be used to establish sufficient condition on $[\alpha]$ that guarantee the convergence of \eqref{richard1}. Notice that \eqref{richard2} can be also rewritten in the following form:
	\begin{eqnarray*}
		\mathscr{X}_{k+1}&=&\mathscr{G}_{[\alpha_{\mu}]} \ast \mathscr{X}_{k}+\mathscr{D}_{[\alpha_{\mu}]} \ast \mathscr{A}^T \ast\mathscr{B}, \quad k=0,1,2,\ldots.
	\end{eqnarray*}
	where $[\alpha_{\mu}] = \mu[e_1]$ and
	$\mathscr{G}_{[\alpha_{\mu}]}=\mathscr{I}-\mathscr{D}_{[\alpha_{\mu}]} \ast
	\mathscr{A}^T\ast \mathscr{A}$. It is not difficult to verify that the tubular
	eigenvalues of $\mathscr{G}_{[\alpha]}$ and $\mathscr{G}_{[\alpha_{\mu}]}$ are
	respectively given by $[e_1]-[\lambda] \ast [\alpha]$ and $[e_1]-\mu [\lambda]$
	where $[\lambda] \in \sigma_T(\mathscr{A}^T\ast \mathscr{A})$. As a result, one
	can observe that the iterative methods \eqref{richard1} and \eqref{richard2}
	converge for 
	$[\alpha] \prec 2 [\lambda_M(\mathscr{A}^T \ast\mathscr{A})^{-1}]$ and
	$\mu < 2\bar{\lambda}_M^{-1}$ where $\bar{\lambda}_M$ denotes the maximum
	T-eigenvalue of $\mathscr{A}^T \ast \mathscr{A}$.

	The values of $[\alpha]$ and $\mu$ which yields the best {\em asymptotic}
	convergence rate\footnote{The term ``best" asymptotic convergence refers to the
		fact that
		$\rho_T(\mathscr{G}_{[\alpha^*]}) \preceq \rho_T(\mathscr{G}_{[\alpha]})$ for
		any symmetric positive definite tubular tensors and
		$\bar{\rho}(\mathscr{G}_{[\alpha_{\mu^*}]}) \le
		\bar{\rho}(\mathscr{G}_{[\alpha_{\mu}]})$ for any positive scalar $\mu$.} in
	terms of tubular and scalar spectral radii are respectively given by
	\begin{equation}\label{tubopt}
		[\alpha^*]=2([\lambda_M(\mathscr{A}^T \ast \mathscr{A})]+[\lambda_m(\mathscr{A}^T \ast \mathscr{A})])^{-1}
	\end{equation}
	and 
	\begin{equation}\label{richardopt}
		\mu^*=2{(\bar{\lambda}_M+\bar{\lambda}_m)}^{-1}
	\end{equation}
	where $\bar{\lambda}_M$ and $\bar{\lambda}_m$ denote the extreme T-eigenvalues of  $\mathscr{A}^T \ast \mathscr{A}$. 
	
	It can be shown that
	$\rho_T(\mathscr{G}_{[\alpha^*]}) \preceq \rho_T(\mathscr{G}_{[\mu^*]})$.
	Therefore, the sequence of approximations produced by \eqref{richard1} with
	$[\alpha^*]$ is expected to converge asymptotically  faster than the one
	generated by \eqref{richard2} with $\mu^*$. To numerically verify the
	superiority of \eqref{richard1} over \eqref{richard2}, some experimental results
	are reported in Subsection \ref{num}.

	\subsection{Orthogonal projection methods}  
	Here, we consider tubular and global forms of a non-stationary type of iterative
	methods. More precisely, the orthogonal projection technique is exploited to
	solve \eqref{normal} in which $\mathscr{A}$ is assumed to be non-singular. It is
	theoretically shown that the tubular form of the method outperform its global
	version. We do not discuss the minimum residual (MR)-based iterative methods.
	However,  similar comparison results can be established between tubular and
	global versions of MR-type methods for solving \eqref{eqten}.
	
	The main discussion of this part begins with presenting some basic concepts on
	the bilinear form as an extension for the notion of inner product. Then, to
	show the role of tubular eigenvalues in convergence analysis of tubular
	iterative method, we prove the convergence of tubular form of steepest descent
	(SD) method \cite[Chapter 5]{Saad2003} as a simple instance.
	
	For arbitrary given tensors
	$\mathscr{X},\mathscr{Y}\in \mathbb{C}^{n\times 1 \times p}$, Kilmer et
	al. \cite{Kilmer2013} defined the bilinear form
	$\left\langle {\mathscr{X},\mathscr{Y}} \right\rangle:=\mathscr{X}^H \ast
	\mathscr{Y}$ which can be regarded as a generalization for the notion of inner
	product. The following lemma presents the properties of bilinear form
	$\left\langle {\cdot,\cdot} \right\rangle$.
	
	\begin{lem} {\rm \cite[Lemma 3.1]{Kilmer2013}}
		Let $\mathscr{X},\mathscr{Y},\mathscr{Z}\in \mathbb{C}^{n\times 1 \times p}$ and 
		$[a]$ be a tubular tensor of length $p$. Then $\left\langle {\mathscr{X},\mathscr{Y}} \right\rangle$ satisfies the following properties:
		\begin{itemize}
			\item  $\left\langle {\mathscr{X},\mathscr{Y}+\mathscr{Z}} \right\rangle=\left\langle {\mathscr{X},\mathscr{Y}} \right\rangle+\left\langle {\mathscr{X},\mathscr{Z}} \right\rangle$
			\item  $\left\langle {\mathscr{X},\mathscr{Y}\ast [a]} \right\rangle=[a]\ast  \left\langle {\mathscr{X},\mathscr{Y}} \right\rangle$
			\item  $\left\langle {\mathscr{X},\mathscr{Y}} \right\rangle=\left\langle {\mathscr{Y},\mathscr{X}} \right\rangle^H$.
		\end{itemize} 
	\end{lem}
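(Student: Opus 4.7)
The plan is to establish all three properties directly from the definition $\langle \mathscr{X},\mathscr{Y}\rangle = \mathscr{X}^H\ast\mathscr{Y}$ by invoking the standard algebraic properties of the T-product (distributivity over addition, associativity, commutativity on tubular tensors, and compatibility with conjugate transposition), all of which are either stated in the preliminaries of the present paper or follow immediately from the block-circulant characterization $\bcrc(\mathscr{A}\ast\mathscr{B})=\bcrc(\mathscr{A})\bcrc(\mathscr{B})$ together with the identity $\bcrc(\mathscr{A}^H)=\bcrc(\mathscr{A})^H$ noted after the definition of the conjugate transpose.

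For the first property, I would write $\langle \mathscr{X},\mathscr{Y}+\mathscr{Z}\rangle = \mathscr{X}^H\ast(\mathscr{Y}+\mathscr{Z})$ and apply distributivity of the T-product over addition of tensors, which follows from the bilinearity of the block-circulant matrix product acting on $\ufld(\mathscr{Y}+\mathscr{Z})=\ufld(\mathscr{Y})+\ufld(\mathscr{Z})$. This splits the right-hand side as $\mathscr{X}^H\ast\mathscr{Y}+\mathscr{X}^H\ast\mathscr{Z}$, which is $\langle \mathscr{X},\mathscr{Y}\rangle+\langle \mathscr{X},\mathscr{Z}\rangle$.

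For the second property, associativity of the T-product (recalled in the preliminaries) gives $\mathscr{X}^H\ast(\mathscr{Y}\ast[a])=(\mathscr{X}^H\ast\mathscr{Y})\ast[a]=\langle\mathscr{X},\mathscr{Y}\rangle\ast[a]$. Since $\langle\mathscr{X},\mathscr{Y}\rangle$ is itself a $1\times 1\times p$ tubular tensor, I would then invoke the commutativity of the T-product on tubular tensors (noted in the paper as a consequence of simultaneous diagonalizability of circulant matrices by $F_p$) to swap the order and conclude $\langle\mathscr{X},\mathscr{Y}\rangle\ast[a]=[a]\ast\langle\mathscr{X},\mathscr{Y}\rangle$.

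The third property requires the identity $(\mathscr{U}\ast\mathscr{V})^H=\mathscr{V}^H\ast\mathscr{U}^H$. I would verify this in one line by passing to the block-circulant representation: $\bcrc((\mathscr{U}\ast\mathscr{V})^H)=\bcrc(\mathscr{U}\ast\mathscr{V})^H=(\bcrc(\mathscr{U})\bcrc(\mathscr{V}))^H=\bcrc(\mathscr{V})^H\bcrc(\mathscr{U})^H=\bcrc(\mathscr{V}^H\ast\mathscr{U}^H)$, and since $\bcrc(\cdot)$ is injective this forces the tensor identity. Applied to $\mathscr{U}=\mathscr{X}^H$ and $\mathscr{V}=\mathscr{Y}$, we obtain $\langle\mathscr{X},\mathscr{Y}\rangle^H=(\mathscr{X}^H\ast\mathscr{Y})^H=\mathscr{Y}^H\ast\mathscr{X}=\langle\mathscr{Y},\mathscr{X}\rangle$, as required. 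There is no genuine obstacle in this proof; the only point deserving care is the compatibility $(\mathscr{U}\ast\mathscr{V})^H=\mathscr{V}^H\ast\mathscr{U}^H$, which I would state and justify explicitly via $\bcrc$ rather than leaving implicit.
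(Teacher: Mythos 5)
Your proof is correct. Note that the paper itself offers no proof of this lemma: it is imported verbatim from Kilmer, Braman, Hao and Hoover (cited as Lemma 3.1 of that reference), so there is no in-paper argument to compare against. Your direct verification is the standard one and is sound: distributivity of $\ast$ over addition follows from linearity of $\bcrc$, $\ufld$ and the matrix product; associativity plus commutativity of tubular tensors (both recorded in the paper's preliminaries) give the second item; and your explicit justification of $(\mathscr{U}\ast\mathscr{V})^H=\mathscr{V}^H\ast\mathscr{U}^H$ via $\bcrc(\mathscr{U}\ast\mathscr{V})=\bcrc(\mathscr{U})\bcrc(\mathscr{V})$, $\bcrc(\mathscr{A}^H)=\bcrc(\mathscr{A})^H$ and injectivity of $\bcrc$ is exactly the point worth making explicit, since the paper only states the transpose compatibility $\bcrc(\mathscr{A}^T)=\bcrc(\mathscr{A})^T$ and uses the multiplicativity of $\bcrc$ implicitly (e.g.\ in the derivation of the decomposition $\mathscr{A}=\mathscr{Q}^H\ast\mathscr{D}\ast\mathscr{Q}$). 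The only cosmetic remark is that you prove $\langle\mathscr{X},\mathscr{Y}\rangle^H=\langle\mathscr{Y},\mathscr{X}\rangle$, which is the stated third item after applying $(\cdot)^H$ to both sides; this is trivially equivalent and needs no repair.
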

	
	\noindent Given $\mathscr{X}\in \mathbb{C}^{n\times 1 \times p}$, the
	\textit{tubular norm} of $\mathscr{X}$ can be also defined by
	$\|\mathscr{X}\|:=\left\langle {\mathscr{X},\mathscr{X}}
	\right\rangle^{\frac{1}{2}}$. The tubular tensor
	$\|\mathscr{X}\|\ast \|\mathscr{X}\|$ is denoted by $ \|\mathscr{X}\|^2$ for
	simplicity.  As can be seen, the tubular norm is a map form
	$\mathbb{C}^{n\times 1 \times p}$ to the set of symmetric (semi-)positive definite tubular
	tensors of length $p$.  In the following, Lemma \ref{lem5.5} reveals that the
	map $\|\cdot\|$ has properties similar to the standard scalar norm. To show 
	the lemma, we only need to present the Cauchy--Schwarz and triangle inequalities
	with respect to the bilinear form $\left\langle {\cdot,\cdot} \right\rangle$
	which are respectively given in Lemma \ref{lem:CS} and Remark \ref{tineq}.

	\begin{lem} \label{lem:CS}
		Let $\mathscr{X},\mathscr{Y}\in \mathbb{C}^{n\times 1 \times p}$. Then the following relation holds 
		\[
		- 	\left\langle {\mathscr{X},\mathscr{X}} \right\rangle^{\frac{1}{2}} \ast 	\left\langle {\mathscr{Y},\mathscr{Y}} \right\rangle^{\frac{1}{2}} \preceq \frac{1}{2}\left(\left\langle {\mathscr{X},\mathscr{Y}} \right\rangle + \left\langle {\mathscr{X},\mathscr{Y}} \right\rangle^H \right)\preceq	\left\langle {\mathscr{X},\mathscr{X}} \right\rangle^{\frac{1}{2}} \ast 	\left\langle {\mathscr{Y},\mathscr{Y}} \right\rangle^{\frac{1}{2}}.
		\] 	
	\end{lem}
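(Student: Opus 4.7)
The plan is to diagonalize every object appearing in the inequality via the DFT matrix $F_p$ and reduce the claim to the classical complex Cauchy--Schwarz inequality applied mode by mode. Start from the block-diagonal representations
\[
\bcrc(\mathscr{X})=(F_p\otimes I_n)\,\mathrm{blockdiag}(\tilde x_1,\ldots,\tilde x_p)\,F_p^H, \qquad \bcrc(\mathscr{Y})=(F_p\otimes I_n)\,\mathrm{blockdiag}(\tilde y_1,\ldots,\tilde y_p)\,F_p^H,
\]
with $\tilde x_i,\tilde y_i\in\mathbb{C}^n$. A direct computation, analogous to the one following Definition~\ref{eig:def}, shows that the tubular tensor $\langle\mathscr{X},\mathscr{Y}\rangle=\mathscr{X}^H\ast\mathscr{Y}$ has Fourier diagonal
\[
F_p^H\,\crc\!\bigl(\langle\mathscr{X},\mathscr{Y}\rangle\bigr)\,F_p=\mathrm{diag}\bigl(\tilde x_1^H\tilde y_1,\ldots,\tilde x_p^H\tilde y_p\bigr).
\]
In particular, $\langle\mathscr{X},\mathscr{X}\rangle$ is Hermitian positive semi-definite, with Fourier diagonal $\mathrm{diag}(\|\tilde x_i\|^2)$.

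Next I would read off the Fourier diagonal of each object in the claimed chain of inequalities. Using the function-of-a-tubular-tensor calculus in \eqref{eq:fofv} (extended to the positive semi-definite case by taking non-negative real square roots entrywise on the Fourier diagonal), the tubular square roots $\langle\mathscr{X},\mathscr{X}\rangle^{1/2}$ and $\langle\mathscr{Y},\mathscr{Y}\rangle^{1/2}$ have Fourier diagonals $\mathrm{diag}(\|\tilde x_i\|)$ and $\mathrm{diag}(\|\tilde y_i\|)$, respectively. Because any two circulants of the same size are simultaneously diagonalized by $F_p$, the T-product of two tubular tensors corresponds in the Fourier domain to entrywise scalar multiplication of their diagonals; hence $\langle\mathscr{X},\mathscr{X}\rangle^{1/2}\ast\langle\mathscr{Y},\mathscr{Y}\rangle^{1/2}$ has Fourier diagonal $\mathrm{diag}(\|\tilde x_i\|\,\|\tilde y_i\|)$. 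A similar computation shows that the Hermitian part $\tfrac12\bigl(\langle\mathscr{X},\mathscr{Y}\rangle+\langle\mathscr{X},\mathscr{Y}\rangle^H\bigr)$ has Fourier diagonal $\mathrm{diag}(\mathrm{Re}(\tilde x_i^H\tilde y_i))$.

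With these identifications in hand, Proposition~\ref{prop1.4} reduces both of the claimed tubular inequalities to the entrywise scalar statement that, for each $i=1,\ldots,p$,
\[
-\|\tilde x_i\|\,\|\tilde y_i\|\ \le\ \mathrm{Re}\bigl(\tilde x_i^H\tilde y_i\bigr)\ \le\ \|\tilde x_i\|\,\|\tilde y_i\|,
\]
which is nothing but the standard complex Cauchy--Schwarz inequality on $\mathbb{C}^n$ applied to $\tilde x_i$ and $\tilde y_i$.

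The only genuine obstacle is the bookkeeping: one must verify that the operations ${}^H$, $\ast$, and ${}^{1/2}$ on tubular tensors all commute with Fourier diagonalization in the way described above, and that the tubular square root is well defined here even though $\langle\mathscr{X},\mathscr{X}\rangle$ need not be strictly positive definite (one simply allows zero entries on the Fourier diagonal). Once this dictionary between tubular operations and entrywise operations on the Fourier diagonal is established, the proof reduces to $p$ independent applications of the classical Cauchy--Schwarz inequality combined with the equivalence provided by Proposition~\ref{prop1.4}.
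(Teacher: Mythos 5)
Your proposal is correct and follows essentially the same route as the paper: diagonalize everything through the DFT, note that $\mathscr{X}^H\ast\mathscr{Y}$ has Fourier diagonal $\mathrm{diag}(\tilde x_i^H\tilde y_i)$, and apply the classical Cauchy--Schwarz inequality componentwise to sandwich $\mathrm{diag}(\Re(\tilde x_i^H\tilde y_i))$ between $\pm\,\mathrm{diag}(\|\tilde x_i\|\,\|\tilde y_i\|)$. Your version is slightly more explicit about the semi-definite extension of the tubular square root and the use of Proposition~\ref{prop1.4}, but the substance is identical to the paper's argument.
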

	
	\begin{proof} It is known that 
		\[
		\bcrc(\mathscr{X})= (F_p \otimes I_n) {\rm blockdiag}(\tilde{x}_1,\tilde{x}_2,\ldots,\tilde{x}_p)F_p^H \quad \text{and} \quad \bcrc(\mathscr{Y})= (F_p \otimes I_n) {\rm blockdiag}(\tilde{y}_1,\tilde{y}_2,\ldots,\tilde{y}_p)F_p^H.
		\]
		The Cauchy--Schwarz inequality implies that $|\tilde{x}_i^H\tilde{y}_i| \le \|\tilde{x}_i\|_2 \|\tilde{y}_i\|_2$ for $i=1,2,\ldots,p$ which implies that
		\begin{eqnarray*}
			- 2 F_p \text{diag}( \|\tilde{x}_1\|_2 \|\tilde{y}_1\|_2,\ldots, \|\tilde{x}_p\|_2 \|\tilde{y}_p\|_2) F_p^H	& \preceq & \bcrc(\mathscr{X})^H\bcrc(\mathscr{Y})+\bcrc(\mathscr{Y})^H\bcrc(\mathscr{X})\\
			&=&F_p \text{diag}(2\Re(\tilde{x}_1^H\tilde{y}_1),\ldots,2\Re(\tilde{x}_p^H\tilde{y}_p))F_p^H \\
			& \preceq & 2 F_p \text{diag}( \|\tilde{x}_1\|_2 \|\tilde{y}_1\|_2,\ldots, \|\tilde{x}_p\|_2 \|\tilde{y}_p\|_2) F_p^H
		\end{eqnarray*}
		where $\Re(z)$ denotes the real part of an arbitrary complex number $z$. The proof follows immediately  from the above relations.
	\end{proof}
	
	\begin{rem}\label{tineq}
		Let $\mathscr{X},\mathscr{Y}\in \mathbb{C}^{n\times 1 \times p}$. 	Evidently, we have 
		$
		\left\langle {\mathscr{X}+\mathscr{Y},\mathscr{X}+\mathscr{Y}} \right\rangle=\left\langle {\mathscr{X},\mathscr{X}} \right\rangle+\left\langle {\mathscr{X},\mathscr{Y}} \right\rangle+\left\langle {\mathscr{X},\mathscr{Y}} \right\rangle^H+\left\langle {\mathscr{Y},\mathscr{Y}} \right\rangle.
		$
		From Lemma \ref{lem:CS}, we conclude that
		\begin{eqnarray*}
			\left\langle {\mathscr{X}+\mathscr{Y},\mathscr{X}+\mathscr{Y}} \right\rangle &\preceq&  \left\langle {\mathscr{X},\mathscr{X}} \right\rangle +\left\langle {\mathscr{Y},\mathscr{Y}} \right\rangle + 2 	\left\langle {\mathscr{X},\mathscr{X}} \right\rangle^{\frac{1}{2}} \ast 	\left\langle {\mathscr{Y},\mathscr{Y}} \right\rangle^{\frac{1}{2}} \\
			&=& (\left\langle {\mathscr{X},\mathscr{X}} \right\rangle^{\frac{1}{2}}+	\left\langle {\mathscr{Y},\mathscr{Y}} \right\rangle^{\frac{1}{2}}) \ast (\left\langle {\mathscr{X},\mathscr{X}} \right\rangle^{\frac{1}{2}} + 	\left\langle {\mathscr{Y},\mathscr{Y}} \right\rangle^{\frac{1}{2}}).
		\end{eqnarray*}
		Hence, we deduce
		\[
		\left\langle {\mathscr{X}+\mathscr{Y},\mathscr{X}+\mathscr{Y}} \right\rangle^{\frac{1}{2}} \preceq \left\langle {\mathscr{X},\mathscr{X}} \right\rangle^{\frac{1}{2}}  + \left\langle {\mathscr{Y},\mathscr{Y}} \right\rangle^{\frac{1}{2}}.	
		\]
		Basically, the preceding relation can be seen as an extension of the triangle inequality with respect to the bilinear form $\left\langle {\cdot,\cdot} \right\rangle$.
	\end{rem}

	\begin{lem}\label{lem5.5}
		Let $\mathscr{X},\mathscr{Y}\in \mathbb{C}^{n\times 1 \times p}$. The following properties hold:
		\begin{itemize}
			\item $\|\mathscr{X}\| \succeq 0$ and $\|\mathscr{X}\|$ is a zero tubular tensor iff $\mathscr{X}$ is a zero tensor, i.e., $\mathscr{X}=\mathscr{O}$.
			\item $\|k\mathscr{X} \|=|k|\|\mathscr{X} \|$ for any complex number $k$.
			\item $\|\mathscr{X} \ast [a] \|=([a]^H\ast [a])^{\frac{1}{2}}\ast \|\mathscr{X} \|$
			for any tubular tensor $[a]$ of length $p$.
			\item $\| \mathscr{X}  + \mathscr{Y} \| \preceq \| \mathscr{X}  \| + \|  \mathscr{Y} \|.$
		\end{itemize} 
	\end{lem}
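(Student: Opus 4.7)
The plan is to verify the four properties in sequence, relying on the bilinear–form identities from Lemma~3.1 (of \cite{Kilmer2013}), the explicit Fourier-domain representations developed earlier in the paper, the square-root construction from Subsection~\ref{sec:basics}, and Lemma~\ref{lem:CS} together with Remark~\ref{tineq}.

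For property~(1), I would start from $\|\mathscr{X}\|^2 = \langle \mathscr{X},\mathscr{X}\rangle = \mathscr{X}^H\ast\mathscr{X}$ and argue that this tubular tensor is Hermitian positive semidefinite. The cleanest route is to use the block-diagonalization $\bcrc(\mathscr{X}) = (F_p\otimes I_n)\,\mathrm{blockdiag}(\tilde x_1,\ldots,\tilde x_p)\,F_p^H$, which gives $\crc(\mathscr{X}^H\ast\mathscr{X}) = F_p\,\mathrm{diag}(\tilde x_1^H\tilde x_1,\ldots,\tilde x_p^H\tilde x_p)\,F_p^H$ as derived in the discussion following Definition~\ref{eig:def}. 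By Proposition~\ref{prop1.4}, $\mathscr{X}^H\ast\mathscr{X}\succeq 0$, its square root is therefore well-defined and also positive semidefinite, and it equals zero iff all $\tilde x_i$ vanish, which by the same diagonalization is equivalent to $\mathscr{X}=\mathscr{O}$.

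For property~(2), scalar multiplication commutes with the T-product and conjugation behaves in the standard way, so $\langle k\mathscr{X},k\mathscr{X}\rangle = \bar k k\,\langle\mathscr{X},\mathscr{X}\rangle = |k|^2\,\|\mathscr{X}\|^2$; taking the (positive semidefinite) square root yields $\|k\mathscr{X}\| = |k|\,\|\mathscr{X}\|$. For property~(3), I would apply the second and third bullets of Lemma~3.1 of \cite{Kilmer2013} to obtain
\[
\langle \mathscr{X}\ast[a],\mathscr{X}\ast[a]\rangle
  = [a]^H \ast \langle \mathscr{X},\mathscr{X}\rangle \ast [a]
  = \bigl([a]^H\ast[a]\bigr)\ast \langle \mathscr{X},\mathscr{X}\rangle,
\]
where the last equality uses the commutativity of the T-product on tubular tensors noted right after the definition of $\ast$. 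Since $[a]^H\ast[a]$ and $\langle \mathscr{X},\mathscr{X}\rangle$ are both Hermitian positive semidefinite and tubular (hence simultaneously diagonalized by $F_p$), the square root factors multiplicatively, giving $\|\mathscr{X}\ast[a]\| = ([a]^H\ast[a])^{1/2}\ast \|\mathscr{X}\|$.

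Property~(4) is precisely the tubular triangle inequality established in Remark~\ref{tineq}, so it suffices to cite it. I do not expect any real obstacle: the only subtle point is justifying that ``square root'' is multiplicative for the products in~(3), which is handled by the fact that all tubular tensors involved are simultaneously unitarily diagonalizable by $F_p$ (via the construction in \eqref{eq:fofv}), so the Fourier-domain computation reduces everything to the scalar identity $\sqrt{ab}=\sqrt{a}\sqrt{b}$ for nonnegative reals applied entrywise to the diagonals.
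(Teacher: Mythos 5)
Your proposal is correct and takes essentially the same route as the paper: the paper states the lemma with the remark that only the Cauchy--Schwarz inequality (Lemma~\ref{lem:CS}) and the triangle inequality (Remark~\ref{tineq}) need to be established, treating the first three items as immediate consequences of the bilinear-form identities and the Fourier-domain diagonalization. Your detailed verification of those three items (via $\crc(\mathscr{X}^H\ast\mathscr{X})=F_p\,\mathrm{diag}(\tilde x_1^H\tilde x_1,\ldots,\tilde x_p^H\tilde x_p)F_p^H$, commutativity of tubular tensors, and entrywise multiplicativity of the square root) is exactly the computation the paper leaves implicit, and your citation of Remark~\ref{tineq} for the last item matches the paper's argument.
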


	Now, we compare the accuracy of the approximations produced by tubular and
	global iterative methods. Let $\mathscr{X}_{old}$ be an available approximation
	to the solution of \eqref{normal}.  The computed new approximations can be
	regarded in the form $\mathscr{X}_{old}+\mathcal{S}$ where the tensor
	$\mathcal{S}$ belongs to the same subspace in both tubular and global iterative
	methods.  To be more precise, suppose that
	$\mathscr{V}_1,\mathscr{V}_2,\ldots,\mathscr{V}_m$ are T-linearly independent
	$n\times 1 \times p$ tensors. It follows immediately that
	$\ufld(\mathscr{V}_1),\ufld(\mathscr{V}_2),\ldots,\ufld(\mathscr{V}_m)$ are a
	linearly independent set of vectors.  The new approximation in tubular and
	global iterative methods are respectively obtained as follows:
	\begin{eqnarray}
		\tilde{\mathscr{X}}_{new} & = &  \mathscr{X}_{old}+ \sum_{\ell=1}^{m} \mathscr{V}_{\ell} \ast [\alpha_{\ell}]  \label{tubal}\\
		\bar{\mathscr{X}}_{new} & = &  \mathscr{X}_{old}+ \sum_{\ell=1}^{m} \mu_{\ell} \mathscr{V}_{\ell} \label{global}
	\end{eqnarray}
	such that the tubular tensors $[\alpha_1], [\alpha_2], \ldots, [\alpha_m]$ are determined by imposing the following $m$ sets of restrictions:
	\[
	\left\langle {\tilde{\mathscr{R}}_{new},\mathscr{V}_\ell} \right\rangle =[0_{p \times 1}], \quad \text{for} \quad  \ell=1,2,\ldots,m
	\]
	and the parameters $\mu_1,\mu_2,\ldots,\mu_k$ are computed using the following orthogonality conditions
	\[
	\left\langle {\bar{\mathscr{R}}_{new},\mathscr{V}_\ell} \right\rangle_F=0 \quad \text{for} \quad \ell=1,2,\ldots,m
	\]
	where $\tilde{\mathscr{R}}_{new}=\mathscr{A}^T\ast \mathscr{B}-\mathscr{A}^T\ast \mathscr{A}\ast \tilde{\mathscr{X}}_{new}$,  $\bar{\mathscr{R}}_{new}=\mathscr{A}^T\ast\mathscr{B}-\mathscr{A}^T\ast\mathscr{A}\ast \bar{\mathscr{X}}_{new}$ and $0_{p \times 1}$ is the zero vector of size $p$. Notice that the iterative method \eqref{global} is mathematically equivalent to the following form
	\[
	\bar{\mathscr{X}}_{new} = \mathscr{X}_{old}+ \sum_{\ell=1}^{m}  \mathscr{V}_{\ell} \ast [\bar{\alpha}_\ell]
	\]
	where $[\bar{\alpha}_\ell]=\mu_\ell [e_1]$ for $\ell=1,2,\ldots,m$. The
	following theorem shows that the approximate solution obtained by iterative
	method \eqref{tubal} satisfies an optimality condition.

	\begin{thm}\label{opt}
		Let the tensor $\tilde{\mathscr{X}}_{new}$ be defined as above  and
		let $\mathscr{X}^*$ be the exact solution of $\mathscr{A}\ast \mathscr{X}=\mathscr{B}$,
		where it is  assumed that  $\mathscr{A}^T \ast \mathscr{A}$ is non-singular.
		Then,
		\begin{equation}
			\left\langle {(\mathscr{A}^T \ast \mathscr{A})\ast(\tilde{\mathscr{X}}_{new}-\mathscr{X}^*),(\tilde{\mathscr{X}}_{new}-\mathscr{X}^*) } \right\rangle \prec \left\langle {(\mathscr{A}^T \ast \mathscr{A})\ast(\mathscr{X}-\mathscr{X}^*),(\mathscr{X}-\mathscr{X}^*) } \right\rangle
		\end{equation}
		for any $\mathscr{X} \ne \tilde{\mathscr{X}}_{new}$ of the form $\mathscr{X}_{old}+\mathscr{S}$ where $\mathscr{S}$ is a (T)-linear combination of the T-linearly independent tensors $\mathscr{V}_1,\mathscr{V}_2,\ldots,\mathscr{V}_m$ where $m\ge 1$ is a given integer.
	\end{thm}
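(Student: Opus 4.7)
The plan is to proceed by the classical Galerkin/orthogonal-projection optimality argument, adapted to the tubular setting. Let $\mathscr{E} := \tilde{\mathscr{X}}_{new}-\mathscr{X}^*$ and $\mathscr{F} := \mathscr{X}-\mathscr{X}^*$, and write $\mathscr{W} := \mathscr{X}-\tilde{\mathscr{X}}_{new}=\mathscr{F}-\mathscr{E}$. By hypothesis both $\mathscr{X}-\mathscr{X}_{old}$ and $\tilde{\mathscr{X}}_{new}-\mathscr{X}_{old}$ are T-linear combinations of $\mathscr{V}_1,\ldots,\mathscr{V}_m$, hence so is $\mathscr{W}$. The core identity I would aim for is the ``Pythagorean'' decomposition
\[
\left\langle (\mathscr{A}^T\ast\mathscr{A})\ast\mathscr{F},\mathscr{F}\right\rangle
= \left\langle (\mathscr{A}^T\ast\mathscr{A})\ast\mathscr{E},\mathscr{E}\right\rangle
+ \left\langle (\mathscr{A}^T\ast\mathscr{A})\ast\mathscr{W},\mathscr{W}\right\rangle,
\]
from which strict tubular inequality $\prec$ follows provided the second summand is strictly positive definite.

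The key step is to show the cross terms vanish. Substituting $\mathscr{F}=\mathscr{E}+\mathscr{W}$, bilinearity of $\langle\cdot,\cdot\rangle$ produces two mixed terms. Observing that
\[
(\mathscr{A}^T\ast\mathscr{A})\ast\mathscr{E}
= (\mathscr{A}^T\ast\mathscr{A})\ast\tilde{\mathscr{X}}_{new}-\mathscr{A}^T\ast\mathscr{B}
= -\tilde{\mathscr{R}}_{new},
\]
the first mixed term becomes $-\langle\tilde{\mathscr{R}}_{new},\mathscr{W}\rangle$. Writing $\mathscr{W}=\sum_{\ell=1}^{m}\mathscr{V}_{\ell}\ast[\beta_{\ell}]$ and applying the bilinear form identity $\langle\mathscr{X},\mathscr{Y}\ast[a]\rangle=[a]\ast\langle\mathscr{X},\mathscr{Y}\rangle$ from Kilmer et al.\ together with the defining orthogonality conditions $\langle\tilde{\mathscr{R}}_{new},\mathscr{V}_\ell\rangle=[0_{p\times 1}]$, this sum collapses to zero. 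The second mixed term equals the Hermitian conjugate of the first, using the third property of the bilinear form and the fact that $\mathscr{A}^T\ast\mathscr{A}$ is symmetric/Hermitian; hence it also vanishes.

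It remains to show that the residual quadratic form is strictly positive whenever $\mathscr{X}\neq\tilde{\mathscr{X}}_{new}$. Non-singularity of $\mathscr{A}$ yields non-singularity of $\bcrc(\mathscr{A})$, so $\bcrc(\mathscr{A}^T\ast\mathscr{A})=\bcrc(\mathscr{A})^H\bcrc(\mathscr{A})$ is Hermitian positive definite as an ordinary matrix; invoking the block-diagonalization \eqref{bloc:diag} and Proposition~\ref{prop1.4} (applied to each diagonal block of the Fourier-domain factorization) promotes this to tubular positive definiteness, so $\langle(\mathscr{A}^T\ast\mathscr{A})\ast\mathscr{W},\mathscr{W}\rangle\succ 0$ for every nonzero $\mathscr{W}\in\mathbb{C}^{n\times 1\times p}$. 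T-linear independence of $\mathscr{V}_1,\ldots,\mathscr{V}_m$ guarantees that $\mathscr{X}\neq\tilde{\mathscr{X}}_{new}$ forces $\mathscr{W}\neq\mathscr{O}$, closing the argument.

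The main obstacle I anticipate is the last step: passing from scalar-level positive definiteness (Definition~\ref{def:spd}, which only asserts positivity of the $(::1)$ slice) to \emph{tubular} positive definiteness in the $\succ$ sense required for strict inequality. The cleanest route is to diagonalize $\bcrc(\mathscr{A}^T\ast\mathscr{A})$ via $F_p\otimes I_n$ into Hermitian positive definite blocks $\tilde A_i^H\tilde A_i$, compute $\langle(\mathscr{A}^T\ast\mathscr{A})\ast\mathscr{W},\mathscr{W}\rangle$ slice-by-slice in the Fourier domain using the block-diagonal representation of $\bcrc(\mathscr{W})$, and then apply Proposition~\ref{prop1.4} to conclude; everything else is bookkeeping with the bilinear form properties already established.
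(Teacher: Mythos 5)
Your proof follows essentially the same route as the paper's: the paper's entire proof consists of the Pythagorean identity
$\left\langle (\mathscr{A}^T\ast\mathscr{A})\ast\tilde{\mathscr{E}},\tilde{\mathscr{E}}\right\rangle
=\left\langle (\mathscr{A}^T\ast\mathscr{A})\ast\mathscr{E},\mathscr{E}\right\rangle
-\left\langle (\mathscr{A}^T\ast\mathscr{A})\ast(\tilde{\mathscr{X}}_{new}-\mathscr{X}),(\tilde{\mathscr{X}}_{new}-\mathscr{X})\right\rangle$,
asserted ``by straightforward computations,'' and you have supplied precisely the Galerkin-orthogonality cancellation of the cross terms that makes it work. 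One caveat on the step you yourself flag as the obstacle: the claim that $\left\langle(\mathscr{A}^T\ast\mathscr{A})\ast\mathscr{W},\mathscr{W}\right\rangle\succ 0$ for \emph{every} nonzero $\mathscr{W}$ is too strong, since a nonzero $\mathscr{W}=\sum_{\ell}\mathscr{V}_{\ell}\ast[\beta_{\ell}]$ may have vanishing Fourier slices $\tilde{w}_i$ (when the corresponding Fourier components of all the $[\beta_{\ell}]$ vanish), and then your slice-by-slice argument delivers only $\succeq$; this strictness issue is equally glossed over by the paper's one-line proof, so your write-up is on par with (indeed more detailed than) the paper's.
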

	\begin{proof}
		For notational simplicity, we set $\tilde{\mathscr{E}}:=\tilde{\mathscr{X}}_{new}-\mathscr{X}^*$ and $\mathscr{E}:=\mathscr{X}-\mathscr{X}^*$. By straightforward computations, it turns out that
		\[
		\left\langle {(\mathscr{A}^T \ast \mathscr{A})\ast \tilde{\mathscr{E}},\tilde{\mathscr{E}}} \right\rangle = \left\langle {(\mathscr{A}^T \ast \mathscr{A})\ast \mathscr{E},\mathscr{E}} \right\rangle - \left\langle {(\mathscr{A}^T \ast \mathscr{A})\ast (\tilde{\mathscr{X}}_{new}-\mathscr{X}), (\tilde{\mathscr{X}}_{new}-\mathscr{X})} \right\rangle
		\]
		which completes the proof.
	\end{proof}
	
	Let the tubular tensors $[a]$ and $[b]$ be given. If $[a]\ast [a] \preceq  [b] \ast [b]$, then $([a]\ast [a])_{::1} \preceq  ([b] \ast [b])_{::1}$ which is equivalent to  $\|[a]\|^2_F \le \|[b]\|^2_F$. Consequently, Theorem \ref{opt} reveals that\footnote{The tensor $(\mathscr{A}^T \ast \mathscr{A})^{1/2}$,  as the square root of $\mathscr{A}^T \ast \mathscr{A}$,  is well-defined by the assumptions of Theorem \ref{opt}.}
	$$
	\|(\mathscr{A}^T \ast \mathscr{A})^{1/2}\ast  (\tilde{\mathscr{X}}_{new}-\mathscr{X}^*)\|_F \le \| (\mathscr{A}^T \ast \mathscr{A})^{1/2}\ast (\bar{\mathscr{X}}_{new}-\mathscr{X}^*) \|_F
	$$
	which shows that the approximate solution computed by \eqref{tubal} is more
	accurate than the one obtained by \eqref{global}.
	To give an instance for the
	role played by the tubular eigenvalues of $\mathscr{A}^T \ast \mathscr{A}$
	in convergence analysis 
	\footnote{In the
		case when $\mathscr{A}$ is symmetric positive definite, the mentioned
		iterative methods in this section are directly applied for
		$\mathscr{A} \ast \mathscr{X}=\mathscr{B}$ and the convergence results are
		changed accordingly.}
	of tubular orthogonal projection
	methods for solving \eqref{normal}, in the sequel, we limit our discussion to a
	simple case where $m=1$ in \eqref{tubal}, i.e., the tubular version of SD
	method.
	
	Let $\mathscr{D}_k=\mathscr{A}^T \ast (\mathscr{B}-\mathscr{A} \ast \mathscr{X}_{k})$ where $\mathscr{X}_{k}$ is the $k$th approximate solution to \eqref{eqten}, the  algorithm can be applied with respect to the bilinear form $\left\langle {\cdot,\cdot} \right\rangle$ in the following manner,
	\begin{equation}\label{SD1}
		\mathscr{X}_{k+1}=\mathscr{X}_{k}+\mathscr{D}_k \ast  \|\mathscr{D}_k\|^2 \ast (\|\mathscr{A} \ast \mathscr{D}_k\|^2)^{-1}, \quad k=0,1,2,\ldots
	\end{equation}
	Notice that the global SD algorithm for solving \eqref{normal} is implemented as follows: 
	\begin{equation}\label{SD2}
		\mathscr{X}_{k+1}=\mathscr{X}_{k}+\frac{\|\mathscr{D}_k\|_F^2}{\|\mathscr{A} \ast \mathscr{D}_k\|_F^2} ~\mathscr{D}_k, \quad k=0,1,2,\ldots.
	\end{equation}

	Considering the ordering \eqref{ordered:eig} and using the strategy exploited in \cite[Lemma 5.8]{Saad2003}, the well-known Kantorovich inequality can be generalized with respect to bilinear form $\left\langle {\cdot,\cdot} \right\rangle$ as follows:
	
	\begin{lem}\label{Kantorovich}
		Let $\mathscr{A}$ be an $n\times n \times p$ Hermitian positive definite tensor and  $[\lambda_m]$ and $[\lambda_M]$ be two extreme tubular eigenvalues of $\mathscr{A}$ defined as in \eqref{ordered:eig}. Let $\mathscr{X}$ be an arbitrary $n\times 1 \times p$ tensor such that $[\mathscr{X}^H \ast \mathscr{X}]$ is non-singular, then
		\begin{equation}\label{Kant}
			(\mathscr{X}^H \ast \mathscr{X})^{-1}\ast	(\mathscr{X}^H \ast \mathscr{A} \ast \mathscr{X})\ast(\mathscr{X}^H \ast \mathscr{A}^{-1} \ast \mathscr{X})\ast	(\mathscr{X}^H \ast \mathscr{X})^{-1}\preceq \frac{1}{4}([\lambda_m]+ [\lambda_M])^2\ast[\lambda_m^{-1}]\ast[\lambda_M^{-1}].
		\end{equation}
	\end{lem}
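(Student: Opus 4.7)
The plan is to reduce the inequality to $p$ independent scalar Kantorovich inequalities via the DFT block-diagonalization \eqref{bloc:diag}, where all of the relevant T-products become block-diagonal products in the Fourier domain.

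First, I would write the Fourier representations
$\bcrc(\mathscr{A})=(F_p\otimes I_n){\rm blockdiag}(\tilde{A}_1,\ldots,\tilde{A}_p)(F_p^H\otimes I_n)$
and
$\bcrc(\mathscr{X})=(F_p\otimes I_n){\rm blockdiag}(\tilde{x}_1,\ldots,\tilde{x}_p)F_p^H$.
Since $\mathscr{A}$ is Hermitian positive definite, each $\tilde{A}_i$ is Hermitian positive definite and has an inverse whose Fourier blocks are $\tilde{A}_i^{-1}$. Since $\mathscr{X}^H\ast\mathscr{X}$ is nonsingular, the earlier analysis after Definition~\ref{eig:def} gives $\tilde{x}_i\ne 0$ for all $i$. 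A direct computation shows that the four T-products in the inequality correspond under $\crc(\cdot)=F_p(\cdot)F_p^H$ to the scalar diagonal matrices with $i$-th entries $1/(\tilde{x}_i^H\tilde{x}_i)$, $\tilde{x}_i^H\tilde{A}_i\tilde{x}_i$, $\tilde{x}_i^H\tilde{A}_i^{-1}\tilde{x}_i$, and $1/(\tilde{x}_i^H\tilde{x}_i)$, respectively. Consequently, the LHS is the tubular tensor whose circulant is $F_p\,{\rm diag}(\eta_1,\ldots,\eta_p)\,F_p^H$, where
\[
\eta_i \;=\; \frac{(\tilde{x}_i^H\tilde{A}_i\tilde{x}_i)(\tilde{x}_i^H\tilde{A}_i^{-1}\tilde{x}_i)}{(\tilde{x}_i^H\tilde{x}_i)^2}.
\]

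Next, I would invoke the discussion preceding Proposition~\ref{prop:eig1}, which identifies the Fourier diagonal of $\crc([\lambda_m])$ (resp.\ $\crc([\lambda_M])$) as ${\rm diag}(\lambda_{\min}(\tilde{A}_i))$ (resp.\ ${\rm diag}(\lambda_{\max}(\tilde{A}_i))$). Hence the RHS has Fourier diagonal with $i$-th entry
\[
\tfrac{1}{4}\,\frac{\big(\lambda_{\min}(\tilde{A}_i)+\lambda_{\max}(\tilde{A}_i)\big)^2}{\lambda_{\min}(\tilde{A}_i)\,\lambda_{\max}(\tilde{A}_i)}.
\]
Applying the classical (matrix) Kantorovich inequality to each Hermitian positive definite $\tilde{A}_i$ and the nonzero vector $\tilde{x}_i$ gives the entrywise bound $\eta_i \le \tfrac{1}{4}(\lambda_{\min}(\tilde{A}_i)+\lambda_{\max}(\tilde{A}_i))^2/(\lambda_{\min}(\tilde{A}_i)\lambda_{\max}(\tilde{A}_i))$ for each $i=1,\ldots,p$.

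Finally, I would translate this component-wise scalar bound back into the $\preceq$ ordering on tubular tensors: since both sides of \eqref{Kant} are Hermitian tubular tensors whose associated circulants are simultaneously diagonalized by $F_p$, the relation $A\preceq B$ at the tubular level is equivalent to the entrywise inequality of their Fourier diagonals (by Proposition~\ref{prop1.4} applied to the difference). This yields \eqref{Kant}. The main obstacle I expect is purely bookkeeping, namely carefully checking that $(\mathscr{X}^H\ast\mathscr{A}^{-1}\ast\mathscr{X})$ really corresponds in the Fourier domain to the diagonal blocks $\tilde{x}_i^H\tilde{A}_i^{-1}\tilde{x}_i$ (which requires that $\bcrc(\mathscr{A}^{-1})$ inherits the block-diagonal factorization with blocks $\tilde{A}_i^{-1}$) and that the product of four tubular tensors on the LHS, together with the square-and-inverse products on the RHS, compose as simple scalar diagonal products under $\crc(\cdot)=F_p(\cdot)F_p^H$; once this is verified, the scalar Kantorovich inequality closes the argument.
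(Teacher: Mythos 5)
Your proposal is correct and follows essentially the same route as the paper: both reduce \eqref{Kant}, via the DFT block-diagonalization, to $p$ independent scalar Kantorovich inequalities on the Fourier blocks, and then translate the entrywise bounds back through the $\preceq$ ordering. The only cosmetic difference is that the paper first passes through the unitary diagonalization $\mathscr{A}=\mathscr{Q}^H\ast\mathscr{D}\ast\mathscr{Q}$ and the normalization $\mathscr{X}^H\ast\mathscr{X}=[e_1]$, re-deriving the scalar bound as in \cite[Lemma 5.8]{Saad2003}, whereas you apply the classical matrix Kantorovich inequality directly to each block $\tilde{A}_i$ with the unnormalized $\tilde{x}_i$.
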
 
	\begin{proof}
		Consider the decomposition \eqref{udiag} where 
		\[
		\bcrc(\mathscr{D})=	(F_p\otimes I_n){\rm blockdiag}(\tilde{D}_1,\tilde{D}_2,\ldots,\tilde{D}_p)(F_p^H\otimes I_n)
		\]
		in which	$\tilde{D}_i={\rm diag}(\tilde{d}_1^{(i)},\tilde{d}_2^{(i)},\ldots,\tilde{d}_n^{(i)})$
		for $i=1,2,\ldots,p$ such that \eqref{ordered:diag} holds.
		Without loss of generality, we may assume that $\mathscr{X}^H \ast \mathscr{X}=[e_1]$.
		It can be observed that
		\[
		\mathscr{X}^H \ast \mathscr{A} \ast \mathscr{X}=	\mathscr{Y}^H \ast \mathscr{D} \ast \mathscr{Y}
		\quad
		\text{and}
		\quad
		\mathscr{X}^H \ast \mathscr{A}^{-1} \ast \mathscr{X}=\mathscr{Y}^H \ast \mathscr{D}^{-1} \ast \mathscr{Y}
		\]
		where $\mathscr{Y}=\mathscr{Q}\ast \mathscr{X}$. Let 
		$
		\bcrc(\mathscr{Y})=(F_p\otimes I_n) {\rm blockdiag}(\tilde{y}_1,\tilde{y}_2,\ldots,\tilde{y}_p)F_p^H.
		$
		Evidently, we have $\mathscr{Y}^H \ast \mathscr{Y}=[e_1]$ which implies $\tilde{y}_i^H\tilde{y}_i=1$  for $i=1,2,\ldots,p$.
		A straightforward computation shows that
		\[
		\crc(\mathscr{Y}^H \ast \mathscr{D} \ast \mathscr{Y})=F_p\text{diag}(\tilde{y}_1^H\tilde{D}_1\tilde{y}_1,\tilde{y}_2^H\tilde{D}_2\tilde{y}_2,\ldots,\tilde{y}_p^H\tilde{D}_p\tilde{y}_p)F_p^H
		\]
		and
		\[
		\crc(\mathscr{Y}^H \ast \mathscr{D}^{-1}\ \ast \mathscr{Y})=F_p\text{diag}(\tilde{y}_1^H\tilde{D}_1^{-1}\tilde{y}_1,\tilde{y}_2^H\tilde{D}_2^{-1}\tilde{y}_2,\ldots,\tilde{y}_p^H\tilde{D}_p^{-1}\tilde{y}_p)F_p^H.
		\]
		Similar to the proof of \cite[Lemma 5.8]{Saad2003}, we can deduce that 
		\[
		\left( \tilde{y}_j^H\tilde{D}_j\tilde{y}_j \right)\left( \tilde{y}_j^H\tilde{D}_j^{-1}\tilde{y}_j \right) \le \frac{(\tilde{d}_1^{(j)}+\tilde{d}_n^{(j)})^2}{4\tilde{d}_1^{(j)}\tilde{d}_n^{(j)}} \quad \text{for} \quad  j=1,2,\ldots,p.
		\]
		Consequently, we have
		\[
		\crc(\mathscr{Y}^H \ast \mathscr{D} \ast \mathscr{Y}) \crc(\mathscr{Y}^H \ast \mathscr{D}^{-1}\ \ast \mathscr{Y}) \preceq F_P \text{diag}\left(\frac{(\tilde{d}_1^{(1)}+\tilde{d}_n^{(1)})^2}{4\tilde{d}_1^{(1)}\tilde{d}_n^{(1)}},\frac{(\tilde{d}_1^{(2)}+\tilde{d}_n^{(2)})^2}{4\tilde{d}_1^{(2)}\tilde{d}_n^{(2)}},\ldots ,\frac{(\tilde{d}_1^{(p)}+\tilde{d}_n^{(p)})^2}{4\tilde{d}_1^{(p)}\tilde{d}_n^{(p)}}\right)F_p^H
		\]
		which completes the proof.
	\end{proof}

	As a result, one can prove that the iterative method \eqref{SD1} is convergent by using  Lemma \ref{lem2.4} and the following theorem. The proof of theorem follows from straightforward computations and the analogous strategies used in \cite[Theorem 5.9]{Saad2003}.
	
	\begin{thm}\label{SDcon}
		Let  $\mathscr{A}$ be nonsingular and $ \mathscr{X}^*$ be the unique  solution of  $\mathscr{A}^T\ast \mathscr{A} \ast \mathscr{X} =\mathscr{A}^T \ast \mathscr{B}$.
		Assume that $\mathscr{E}_k=\mathscr{X}^*-\mathscr{X}_k$ where $\mathscr{X}_k$ stands for the $k$th approximate solution obtained by \eqref{SD1}. Then,
		\[
		[\left\langle { \mathscr{A} \ast{\mathscr{E}_{k+1}},\mathscr{A}\ast {\mathscr{E}_{k+1}}} \right\rangle] \preceq [w]  \ast [\left\langle {\mathscr{A} \ast{\mathscr{E}_k},\mathscr{A}\ast {\mathscr{E}_k}} \right\rangle]
		\]
		where 
		$
		[w] = \left([k]-[e_1]\right)^{2} \ast \left([k]+[e_1]\right)^{-2}
		$
		with $[k]:=[\lambda_M(\mathscr{A}^T \ast \mathscr{A})]\ast[(\lambda_m(\mathscr{A}^T \ast \mathscr{A}))^{-1}]$.\footnote{In the proof of Theorem \ref{SDcon}, one need to use that fact that $[a]\preceq [b]$ impels $[b^{-1}]\preceq [a^{-1}]$ for arbitrary given symmetric positive tubular tensors $a$ and $b$.}
	\end{thm}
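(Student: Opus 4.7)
The plan is to mimic the classical steepest descent convergence argument (Saad, Theorem 5.9) inside the tubular algebra, exploiting commutativity of tubular tensors and the Kantorovich-type bound already proved in Lemma \ref{Kantorovich}. First I would set $\mathscr{F}_k := \mathscr{A}\ast \mathscr{E}_k$ so that $\mathscr{D}_k = \mathscr{A}^T\ast \mathscr{F}_k = \mathscr{A}^T\ast \mathscr{A}\ast \mathscr{E}_k$ (using that $\mathscr{B} - \mathscr{A}\ast \mathscr{X}_k = \mathscr{A}\ast \mathscr{E}_k$ since $\mathscr{A}$ is nonsingular). Then from \eqref{SD1} I would derive the error recursion $\mathscr{E}_{k+1} = \mathscr{E}_k - \mathscr{D}_k \ast [\alpha_k]$ with $[\alpha_k] := \|\mathscr{D}_k\|^2 \ast (\|\mathscr{A}\ast \mathscr{D}_k\|^2)^{-1}$, which is Hermitian positive definite tubular, and hence $\mathscr{F}_{k+1} = \mathscr{F}_k - (\mathscr{A}\ast \mathscr{D}_k)\ast [\alpha_k]$.

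Next I would expand $\langle \mathscr{F}_{k+1}, \mathscr{F}_{k+1}\rangle$ via the properties of the bilinear form listed in Lemma [Kilmer--Martin--Perrone]. The two cross terms produce $[\alpha_k]\ast \langle \mathscr{F}_k, \mathscr{A}\ast \mathscr{D}_k\rangle$ and its Hermitian conjugate; both equal $[\alpha_k]\ast \|\mathscr{D}_k\|^2$, since $\langle \mathscr{F}_k, \mathscr{A}\ast \mathscr{D}_k\rangle = \mathscr{E}_k^H\ast \mathscr{A}^T\ast \mathscr{A}\ast \mathscr{A}^T\ast \mathscr{A}\ast \mathscr{E}_k = \mathscr{D}_k^H\ast \mathscr{D}_k$. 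The quadratic term contributes $[\alpha_k]^2 \ast \|\mathscr{A}\ast \mathscr{D}_k\|^2$. Substituting the definition of $[\alpha_k]$ and using commutativity of tubular factors cancels one copy of $\|\mathscr{A}\ast \mathscr{D}_k\|^2$ and yields
\[
\langle \mathscr{F}_{k+1},\mathscr{F}_{k+1}\rangle
= \langle \mathscr{F}_k,\mathscr{F}_k\rangle - \|\mathscr{D}_k\|^2 \ast (\|\mathscr{A}\ast \mathscr{D}_k\|^2)^{-1}\ast \|\mathscr{D}_k\|^2.
\]
Factoring $\langle \mathscr{F}_k,\mathscr{F}_k\rangle$ from the right and using $\langle \mathscr{F}_k,\mathscr{F}_k\rangle = \mathscr{D}_k^H \ast (\mathscr{A}^T\ast \mathscr{A})^{-1} \ast \mathscr{D}_k$ rewrites the reduction ratio as
\[
[e_1] - \frac{(\mathscr{D}_k^H \ast \mathscr{D}_k)^2}{(\mathscr{D}_k^H \ast \mathscr{A}^T\ast \mathscr{A}\ast \mathscr{D}_k) \ast (\mathscr{D}_k^H \ast (\mathscr{A}^T\ast \mathscr{A})^{-1}\ast \mathscr{D}_k)}.
\]

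Now I would apply Lemma \ref{Kantorovich} with the Hermitian positive definite tensor $\mathscr{A}^T\ast \mathscr{A}$ and $\mathscr{X} = \mathscr{D}_k$. After the standard rearrangement (using that $[a]\preceq [b]$ implies $[b^{-1}]\preceq [a^{-1}]$ for Hermitian positive definite tubulars, as noted in the footnote), this gives the lower bound
\[
\frac{(\mathscr{D}_k^H \ast \mathscr{D}_k)^2}{(\mathscr{D}_k^H \ast \mathscr{A}^T\ast \mathscr{A}\ast \mathscr{D}_k) \ast (\mathscr{D}_k^H \ast (\mathscr{A}^T\ast \mathscr{A})^{-1}\ast \mathscr{D}_k)} \succeq 4\,[\lambda_m]\ast [\lambda_M] \ast ([\lambda_m]+[\lambda_M])^{-2},
\]
where $[\lambda_m],[\lambda_M]$ are the extreme tubular eigenvalues of $\mathscr{A}^T\ast \mathscr{A}$. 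Subtracting from $[e_1]$ and simplifying algebraically gives $([\lambda_M]-[\lambda_m])^2 \ast ([\lambda_m]+[\lambda_M])^{-2}$; factoring $[\lambda_m]^2$ out of numerator and denominator yields precisely $[w] = ([k]-[e_1])^2\ast ([k]+[e_1])^{-2}$ with $[k]=[\lambda_M]\ast [\lambda_m]^{-1}$, completing the estimate.

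The main obstacle I anticipate is bookkeeping in the tubular algebra: one must confirm that all tubular scalars involved ($[\alpha_k]$, the Rayleigh-quotient-like quantities, and $[\lambda_m]+[\lambda_M]$) are Hermitian positive definite so that the $\preceq$ ordering is preserved under multiplication and inversion, and that commutativity of tubulars with tubulars permits the cancellation that produces a clean recurrence. Once that is verified, the Kantorovich step and the change of variables to $[k]$ are direct analogues of the matrix case.
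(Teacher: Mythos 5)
Your argument is correct and is precisely the route the paper intends: the paper omits the proof, saying it follows by straightforward computations and the strategy of the matrix steepest-descent bound (Saad, Theorem 5.9), using Lemma \ref{Kantorovich} applied to $\mathscr{A}^T\ast\mathscr{A}$ and the footnoted fact that $[a]\preceq[b]$ implies $[b]^{-1}\preceq[a]^{-1}$ — exactly the error recurrence, Kantorovich step, and change of variables to $[k]$ that you carry out. The only implicit assumption (shared with the paper) is non-singularity of the tubular quantities $\|\mathscr{D}_k\|^2$ and $\|\mathscr{A}\ast\mathscr{D}_k\|^2$ so that the step \eqref{SD1} and the factoring of the reduction ratio are well defined.
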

	
	In view of the above theorem, we can observe that the convergence of iterative method \eqref{SD1} deteriorates
	when the matrices $\tilde{A}_1^H\tilde{A}_1,\tilde{A}_2^H\tilde{A}_2\ldots,\tilde{A}_p^H\tilde{A}_p$ are too ill-conditioned where $\tilde{A}_1,\tilde{A}_2\ldots,\tilde{A}_p$ are given in \eqref{bloc:diag}. 
	
	\subsection{Numerical experiments}\label{num} In the sequel, we report on some
	experimental results on the performances of tubular and global versions of
	mentioned iterative methods to solve \eqref{normal}. All of the numerical
	computations were carried out on a computer with an Intel Core i7-10750H CPU @
	2.60GHz processor and 16.0GB RAM using MATLAB.R2020b. The tensor $\mathscr{B}$
	is generated so that $\mathscr{X}^*=\verb|randn|(n,1,n)$ is the exact solution
	of \eqref{eqten}.  The performance of iterative methods \eqref{richard1} and
	\eqref{richard2} were respectively examined for $[\alpha]=[\alpha^*],[\alpha_1]$
	and $\mu=\mu^*,\mu_1$ where the tubular tensor $[\alpha^*]$ (scalar $\mu^*$) is
	defined by Eq. \eqref{tubopt} (Eq. \eqref{richardopt}),
	\[
	[\alpha_1] = [\lambda_M(\mathscr{A}^T \ast\mathscr{A})^{-1}] \quad \text{and} \quad \mu_1 =\bar{\lambda}_M^{-1}
	\]
	here $\bar{\lambda}_M$ denotes the maximum T-eigenvalue of  $\mathscr{A}^T \ast \mathscr{A}$.
	For simplicity, we use the abbreviated names in the figures which are listed in Table \ref{abname}.
	In the figures, corresponding to each method, number of iterations ($k$) are displayed versus ${\rm Log}_{10}\delta_k$ where
	\begin{equation*}
		\delta_k:=\frac{\|{\mathscr{B}}-\mathscr{A} \ast \mathscr{X}_{k}\|_F}{\|{\mathscr{B}}\|_F}
	\end{equation*}
	here $\mathscr{X}_{k}$ ($k\ge 1$) is the $k$th computed approximate solution and $\mathscr{X}_{0}$  is taken to be zero.
	
	\begin{table}[ht]
		\centering
		{\scriptsize \caption{The list of abbreviated names for methods}\label{abname}}
		\begin{tabular}{lccc}
			\hline
			Method &&& Abbreviated name \\
			\hline
			Global version of Richardson &&& Richardson\\
			Global version of Steepest Descent   &&& SD\\
			Tubular Richardson &&& TR \\
			TR with relaxation step &&& TRR\\
			Tubular SD &&& TSD\\
			TSD with relaxation step &&& TSDR\\
			\hline		
		\end{tabular}	
	\end{table}

	\begin{example}\label{ex1}\rm \cite[Example 6.3]{Reichel} 
		Let $\mathscr{A}$ in \eqref{eqten} be an $256 \times 256 \times 256$ tensor such that
		$\mathscr{A}_{::i}=A(i,1)A$ for $i=1,2\ldots, n$ in which
		\[	A=\frac{1}{{\sqrt{2\pi\sigma^2}}}\cdot\verb|toeplitz|\left(\verb|[z(1) fliplr(z(2:end))]|,\verb|z|\right)
		\]
		with
		$\verb|z|=\left[{\verb|exp|\left(-([0:{\rm \verb|band| }-1]_{\cdot}^2)/{(2 \sigma^2)}\right),\verb|zeros|(1,256-\verb|band|)} \right]$
		where $\verb|band|=7$ and $\sigma=4$.
	\end{example}
	
	The  tensor equation in Example \ref{ex1} is essentially a well-conditioned problem. So, as anticipated, the TR (with $[\alpha^*]$ and $[\alpha_1]$), Richardson (with $\mu^*$ and $\mu_1$), TSD and SD methods work fine. Basically, our numerical observations confirm the fact that the tubular versions of the method outperform their global ones. For further details, we plot the convergence history of the proposed methods in Figure \ref{fig11}.  
	
	\begin{figure}[H]
		\centering
		\includegraphics[width=.7\linewidth]{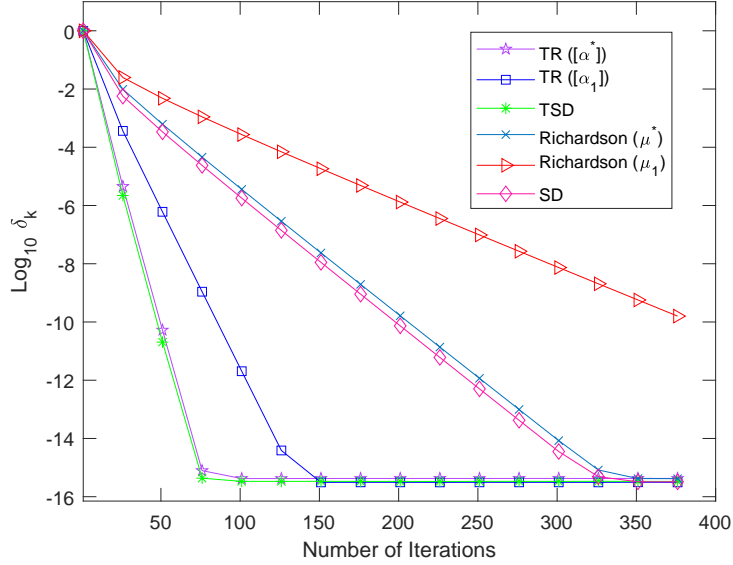}	
		\caption{Convergence history of examined iterative methods for Example \ref{ex1}.  
		}
		\label{fig11}
	\end{figure}

	\begin{example}\label{ex2}\rm \cite[Example 6.1]{Reichel} 
		Let $\mathscr{A}$ in \eqref{eqten} be an $n \times n \times n$ tensor such that
		$\mathscr{A}_{::i}=A_1(i,1)A_2$ for $i=1,2\ldots, n$  in which the matrices $A_1$ and $A_2$ are respectively generated by the function $\verb|baart|(n)$ in Hansen's package \cite{Hansen2007} and MATLAB function $\verb|gallery|('\verb|prolate|',n, 0.46)$.
	\end{example}
	
	As mentioned in \cite{Reichel}, the coefficient tensor in the above example is
	severely ill-conditioned. Hence, the SD and TSD methods do not work well. Our
	numerical experiments also illustrate that the Richardson and TR (with
	$[\alpha^*]$) methods stagnate.  However, numerical experiments show that the TR
	method with $[\alpha_1]$ can act as an iterative regularization method. To
	monitor the accuracy of obtained approximation corresponding to $\mathscr{X}^*$,
	the TR method (with $[\alpha_1]$) was used with a relative residual tolerance of
	$5\cdot 10^{-3}$. In this case, in average, the method stopped after about $17$ iterations and
	produced an approximate solution with relative error equal to $0.2601$ for
	$n=256$. In \cite{Reichel}, the right-hand side tensor $\mathscr{B}$ is
	generated such that $\mathscr{B}=\mathscr{A} \ast \hat {\mathscr{X}}^*$ with
	$ \hat {\mathscr{X}}^*={\rm ones}(256,1,256)$. This is a special case for which
	the TR method (with $[\alpha_1]$) requires less than five iterations with
	respect to our exploited stopping criterion, e.g., in a specific run it results
	an approximate solution with the relative error $0.0094$ after three iterations.
	
	To improve performance of the TR method (with $[\alpha_1]$) for solving Example
	\ref{ex2}, we can combine it with some kind of relaxation step. For instance,
	we experimentally observed that performance of TR method (with $[\alpha_1]$) can
	be improved by using the following simple relaxation step which is originally
	proposed in \cite{Antuono} for accelerating the convergence speed of iterative
	methods to solve linear system of equations. To apply the relaxation step,
	first, let us consider the mentioned iterative methods in the following form:
	\[
	\mathscr{X}_{k+1}=\mathscr{X}_k+\mathcal{F}(\mathscr{X}_k),\quad  k=0,1,2,\ldots.
	\]
	After performing two steps, we can add relation to  the method as follows:
	\begin{eqnarray*}
		\bar{\mathscr{X}}_{k+1}&=&\mathscr{X}_k+\mathcal{F}(\mathscr{X}_k)\\
		{\mathscr{X}}_{k+1}&=&	{\mathscr{X}}_{k-1}+  \omega_{k}  	(\bar{\mathscr{X}}_{k+1}-	\bar{\mathscr{X}}_{k-1})
	\end{eqnarray*}
	where, in the experimental results shown, the scalar $ \omega_k$ is determined
	by minimum residual technique with respect to scalar norm of tensor at each step
	\[\omega_k  = \frac{{{{\left\langle {{\mathscr{R}_{k - 1}},{\mathscr{R}_{k - 1}} - {\bar{\mathscr{R}}_{k + 1}}} \right\rangle }_F}}}{{\left\| {{\mathscr{R}_{k - 1}} - \bar{{\mathscr{R}}}_{k + 1}} \right\|^2_F}}\]
	here $\mathscr{R}_{k - 1}=\mathscr{B}-\mathscr{A} \ast \mathscr{X}_{k - 1}$ and
	$\bar{{\mathscr{R}}}_{k + 1}=\mathscr{B}-\mathscr{A} \ast \bar{{\mathscr{X}}}_{k
		+ 1}$\footnote{We comment that $\omega_{k}$ may be chosen to be a constant
		tubular tensor and the relaxation method can be also used with other possible
		approaches. Although these kinds of changes may lead to better results, we do
		not consider them in this work. Basically, here, the main goal is to highlight the
		role of tubular eigenvalue analysis in studying the convergence of  tubular iterative methods
		and possibly developing suitable preconditioners for \eqref{eqten} in the
		future works.}. In Figure \ref{fig2}, for the sake of comparison, the
	performance of TR and TRR methods (with $[\alpha_1]$) are displayed.  For TSD
	and SD methods, based on our observations, the above utilized relaxation step
	may not be successful due to the fact that the problem is too
	ill-conditioned. Specifically, the sequence of numerical approximation generated
	by TSD and SD methods even with relaxation step diverge when $n> 100$ in Example
	\ref{ex2}. To demonstrate the effect of relaxation step, the convergence history
	of the methods are plotted in Figure \ref{fig22} for $n=100$.

	%

	\begin{figure}[h]
		\centering
		\includegraphics[width=.7\linewidth]{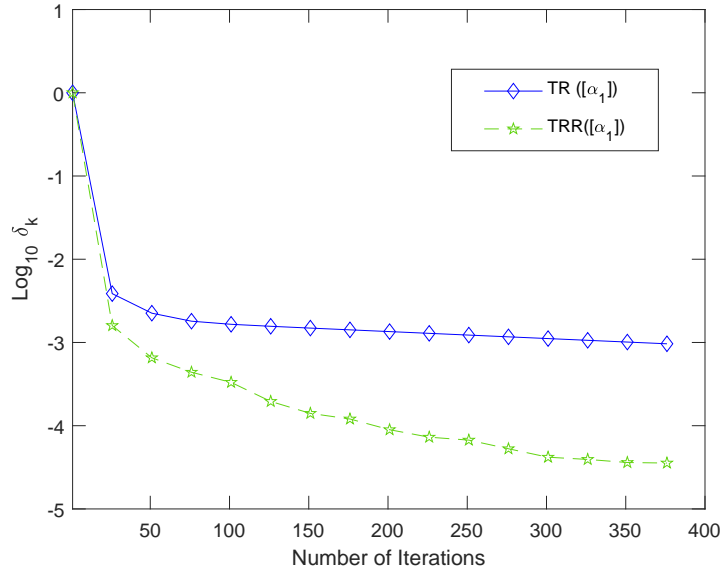} 	
		\caption{Convergence history of examined iterative methods in Example \ref{ex2} for $n=256$.  
		}
		\label{fig2}
	\end{figure}
	
	\begin{figure}[h]
		\centering
		\includegraphics[width=.7\linewidth]{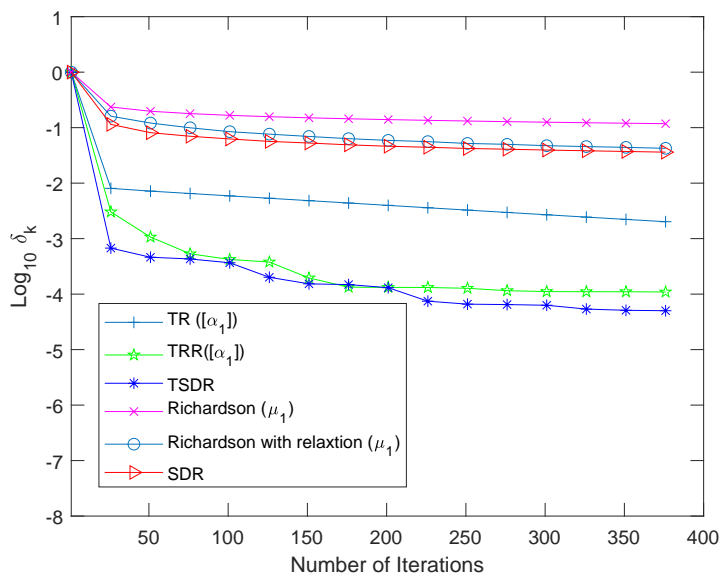}
		\caption{Convergence history of examined iterative methods in Example \ref{ex2} for $n=100$.  
		}
		\label{fig22}
	\end{figure}

	\section{Conclusion}\label{sec5}
	
	We analyzed the properties of eigenvalues of third-order tensors with respect to
	T-product. More precisely, the eigenvalues of tensors were considered as tubular
	tensors. Links were established between this type of eigenvalues with
	T-eigenvalues and eigentuples which are two alternative definitions of
	eigenvalues for tensors with respect to the T-product. Some results were also
	included to indicate the role of tubular eigenvalues in convergence analysis of
	tubular iterative methods for solving tensor equation in the form
	$\mathscr{A} \ast \mathscr{X} = \mathscr{B}$. In addition, we briefly mentioned
	that the tubular form of iterative methods surpasses their
	alternative tensor version which is mathematically equivalent to applying them
	for solving $\bcrc(\mathscr{A}) \ufld(\mathscr{X})=\ufld(\mathscr{B})$.
	
	Future work that can potentially benefit from the notion of tubular eigenvalues
	could consider research for speeding up the convergence of tubular Krylov
	subspace methods to solve tensor equations with respect to T-product by
	proposing suitable preconditioners or using other techniques such as deflation
	and augmentation.
	
	\section*{Disclosure statement}
	The authors declare no potential conflict of interest.
	
	\bibliographystyle{siam}
	\bibliography{refs}

\end{document}